\numberwithin{equation}{section}
\definecolor{mycolorred}{rgb}{1, 0, 0}
\definecolor{mycolorblue}{rgb}{0, 0, 1}
\def\blue #1{{ #1}}
\definecolor{mycolorpink}{rgb}{1, 0, 1}
\newtheorem{theorem}{Theorem}[section]
\newtheorem{corollary}[theorem]{Corollary}
\newtheorem{definition}[theorem]{Definition}
\newtheorem{example}[theorem]{Example}
\newtheorem{lemma}[theorem]{Lemma}
\newtheorem{proposition}[theorem]{Proposition}
\newtheorem{remark}[theorem]{Remark}
\def\<{\langle}
\def\>{\rangle}
\def\R{{\mathbb R}}
\def\N{{\mathbb N}}
\begin{document}
\def\kp{{\kappa_p}}

\title[Euler-type approximation for the invariant measure]{Euler-type approximation for the invariant measure: An abstract framework}
\author{Aur\'elien Alfonsi}
\address{CERMICS, ENPC, Institut Polytechnique de Paris, CNRS, Marne-la-Vallée, France \& MathRisk team-project, Inria Paris, France.}
\email{aurelien.alfonsi@enpc.fr}
\author{Vlad Bally}
\address{Universit\'e Gustave Eiffel, LAMA (UMR CNRS, UPEMLV, UPEC), MathRisk INRIA,
  F-77454 Marne-la-Vall\'ee, France.}
\email{vlad.bally@univ-eiffel.fr}
\author{Arturo Kohatsu-Higa}
\address{Department of Mathematical Sciences, College of Science and Engineering, Ritsumeikan University,
1-1-1 Noji-higashi, Kusatsu, 525-8577, Japan. }
\email{khts00@fc.ritsumei.ac.jp}
\keywords{Invariant measure approximation, Euler-type scheme with decreasing step, McKean-Vlasov equations, Boltzmann equations}
\subjclass{37M25, 60G99, 65C99}
\maketitle

\begin{abstract} 
	We establish a general framework to study the rate of convergence of a Euler type approximation scheme with  decreasing time steps to the invariant measure, for a general class of stochastic systems. The error is measured in general Wasserstein distances, which enables to encompass cases with non global contractivity conditions. Our main assumption is a coupling property which is expressed in terms of the one-step approximation. We show that the proposed set-up can be applied to a wide range of  equations that may be law dependent, such as Langevin equations, reflected equations, Boltzmann type equations and for a recent McKean Vlasov type model for neuronal activity.
\end{abstract}
\section{Introduction}
The problem of existence and uniqueness of the invariant measure for a Markov semigroup has a long history. We refer the reader to the influential paper by Meyn and Tweedie
\cite{MeTw} and Hairer's lecture notes~\cite{Hairer}. In direct connection to this question, the speed of convergence
of the semigroup to the invariant measure has been also widely studied, see in particular 
Bakry et al.~\cite{BCG}, Cattiaux and  Guillin~\cite{CaGu}, Hairer~\cite{Hairer} and the
references therein. Unless the semigroup can be simulated exactly, one typically uses approximation schemes of the semigroup to then approximate the invariant measure.
Approximation schemes for the invariant measure associated to some
stochastic process are thus built in two steps: a first step deals with the error between the invariant measure and the distribution of the semigroup for large times, and second step deals with the error between the semigroup and its approximation.

 In this sense, approximations based on Euler type schemes has also been an important research topic in the past years, particularly in the specific framework of the Langevin diffusion that is related to gradient descent algorithms that are widely used in Statistics and Machine Learning. We may distinguish two streams of research. The first one considers a constant time step for the Euler type approximation and then analyses the error between the invariant measures of the continuous and discretised semigroup, see in particular
Talay~\cite{Talay2002}, Mattingly et al.~\cite{MSH}, Dalalyan and Tsybakov~%
\cite{DaTs}, Dalalyan~\cite{Dalalyan}, Durmus and Moulines~\cite{DuMo},  Brosse et al.~\cite{BDM}, Crisan et al.~\cite{CDO} and Chen et al.~\cite{CDSX}. The second stream of research, initiated by Lamberton and Pag\`es~\cite{LP02,LP03} and Lemaire~\cite{Lemaire} considers instead an approximation scheme with a decreasing sequence of time steps, which allows to get directly the convergence to the invariant measure of the continuous semigroup.  This point of view has been used in  several recent works: see  Pag\`es and Panloup~\cite{PaPa}, Bally and Qin~\cite{BaQi}  and \blue{Chen et al.~ \cite{chen2025}.}

Furthermore a number of stochastic equations in which the law of the solution is part of the equation, such as in the cases of McKean-Vlasov or Boltzmann type equations, do not seem to be within the framework of Markov semigroups. The literature about the approximation of the invariant measure for such systems is less developed but very active in the last years (see Schuh and Souttar \cite{schuh2}, Cormier \cite{Cormier} and Du et. al. \cite{DJL} and the references therein). Having this motivation in mind, we focus on the approximations to the invariant measures for flows of transformations on the Wasserstein space, indexed on time. First, under appropriate hypotheses, we obtain exponential speed of convergence to the invariant measure as the time goes to infinity. Second, considering an approximation scheme with a decreasing step, we obtain in a general framework the rate of convergence in Wasserstein distance between the invariant measure and its approximation.
Our work is related to the recent paper of Schuh and Souttar~\cite{schuh2} who provide also a general framework to get  
uniform in time estimates between the continuous time process and its approximation, but for a constant time step approximation.  As a consequence, there are significant differences between their framework and ours that we present in detail in Remark~\ref{re:long} below.

Let us be more precise and present the main results of the paper. In the
first section (Section~\ref{Sec_Framework}) we introduce the main objects. We consider a
separable Polish metric space $(B,d)$. We
denote $\mathcal{P}(B)$ the space of probability measures on $B$ and, for $%
p\geq 1$, $\mathcal{P}_{p}(B)$ is the set of probability measures with
finite $p$ moment, endowed
with the Wasserstein distance~$W_{p}$. The main object of study in our paper is a
family of transformations $\Theta _{s,t}:\mathcal{P}_{p}(B)\rightarrow 
\mathcal{P}_{p}(B), s\leq t$. Our first problem, is to give conditions that ensure the existence and uniqueness of the invariant probability measure, that is  a probability measure $\nu $ such that $\Theta _{s,t}(\nu )=\nu$ for all $s<t$.

The next step towards our main goal  is to define the approximation scheme. With some abuse of notation, we define for a time grid $\pi =\{s=t_{0}<t_{1}<...<t_{n}<...\}$ such that $ t_n\to\infty  $,
\begin{equation*}
\Theta _{t_{0},t_{k}}^{\pi }(\mu )=\Theta _{t_{k-1},t_{k}}\circ \Theta
_{t_{k-1},t_{k-2}}\circ ...\circ \Theta _{t_{0},t_{1}}.
\end{equation*}%
 In the present case, we have in mind that $\Theta _{t_{i},t_{i+1}}$ represents an approximation, such as the one step Euler scheme between times $t_i$ and $t_{i+1}$, and therefore $%
\Theta _{t_{0},t_{k}}^{\pi }$ represents the corresponding Euler scheme on the time grid~$\pi$. Our goal is to give conditions so that $%
\Theta _{t_{0},t_{k}}^{\pi }$ converges in $ p $-Wasserstein distance to $ \nu  $ as $ k\to \infty $ and to determine its rate of convergence.

A first hypothesis which is in force in our approach is a uniform bound on the $p$-moments for some $h>0$: 
\begin{equation*}
M_{p}(\Theta ,\mu ,h)=\sup_{\pi: t_i-t_{i-1}\le h }\sup_{n\ge 0}\left\Vert \Theta
_{t_{0},t_{n}}^{\pi }(\mu )\right\Vert _{  p}^{p}<\infty,
\end{equation*}%
with the supremum taken on all the partitions $\pi$ with decreasing time steps and all $n\in \N$. In
many examples, this property is obtained with a Foster Lyapunov type
condition~\eqref{AN2}. In some other examples, this property can be derived from existing results on the corresponding continuous time process.  

Next,  we introduce a coupling argument that plays a central role in our analysis. We say that two families $%
\Theta ^{1}$ and $\Theta ^{2}$ are $(p,b_*,\varepsilon )-$ coupled with $b_*,\varepsilon>0$ if 
\begin{equation}
\label{i22}
W_{  p}^{p}(\Theta _{s,t}^{1}(\mu ^{1}),\Theta _{s,t}^{2}(\mu ^{2}))\leq
(1-(t-s)b_*)W_{  p}^{p}(\mu ^{1},\mu ^{2})+C(1+\left\Vert \mu ^{1}\right\Vert
_{  p}^{p}+\left\Vert \mu ^{2}\right\Vert _{  p}^{p})(t-s)^{1+\varepsilon }
\end{equation}
for $0<t-s<h$. \blue{Let $t_0\ge 0$ and $t_n=t_0+ \sum_{k=1}^n \frac{1}{1+k}$. Then, Corollary~\ref{cor_speed}  proves under~\eqref{i22} that for any $\mu^1,\mu^2 \in  \mathcal{P}_p(B)$, there exists $A\in \R_+$ such that
\begin{equation}
W_{p}^{p}(\Theta _{t_0,t_n}^{1,\pi }(\mu ^{1}),\Theta _{t_0,t_n}^{2,\pi }(\mu
^{2}))\leq A \frac{1}{n^{b_*\wedge \varepsilon}} , \ n\ge 0,\label{i2'}
\end{equation}%
up to  a multiplicative logarithm term when $b_*=\varepsilon$.}

Now we present the main consequences of the above general abstract result that will be proven throughout the article\footnote{For exact definitions and conditions, we refer the reader to the main text.}. The first one concerns
existence and uniqueness of the invariant measure and exponential estimates
of the speed of convergence. Let   $\theta _{s,t}:\mathcal{P}_{p}(B)\rightarrow \mathcal{P}_{p}(B),s\leq t$
be a flow ($\theta _{r,t}\circ \theta _{s,r}=\theta _{s,t}$ for $ 0\leq s\leq r\leq t $) that is time homogeneous (i.e. $\theta _{s,t}(\mu )=\theta _{0,t-s}(\mu )$) such
that $\mu \rightarrow \theta _{s,t}(\mu )$ is continuous for the $W_{p}$~distance. We assume that $\theta$ has a uniform bound on the $p$ moments and moreover that
$\theta$ is $(  p,b_*,\varepsilon )$-coupled with itself, meaning that (\ref{i22})
holds with $\Theta ^{1}=\Theta ^{2}=\theta$. Then, there exists a unique
invariant probability measure $\nu $ (that is $\theta _{s,t}(\nu )=\nu $ for
every $s\leq t$,  and one has the following exponential speed of convergence
\begin{equation}
\forall \mu \in \mathcal{P}_{p}(B), \exists C \in \mathbb{R}_+,\forall t\ge s, \ \quad W_{   p}^{p}(\theta _{s,t}(\mu ),\nu )\leq Ce^{-(b_*\wedge \varepsilon )(t-s)}
\label{i3}
\end{equation}
when $b_*\not= \varepsilon$ (an extra multiplicative factor $(1+(t-s))$ appears when $b_*= \varepsilon$).
Our second result concerns the approximation of $\nu$ by means of an approximation scheme for the grid $\pi $ with  $\gamma_{k}=\frac{1}{1+k}$, $k\in \mathbb{N}$. If we suppose in addition to the previous hypotheses that  $\Theta _{s,t}:\mathcal{P}_{p}(B)\rightarrow \mathcal{P}_{p}(B),s\leq t$ which is $(p,b_*,\varepsilon)$-coupled with $\theta$ in
the sense of (\ref{i22}), then for every $\mu \in \mathcal{P}_{p}(B)$, there exists $C\in \mathbb{R}_+$ such that
\begin{equation}
W_{   p}(\Theta _{t_0,t_{n}}^{\pi }(\mu ),\nu )\leq \frac{C}{n^{{\varepsilon \wedge b_* } }}.
\label{i4}
\end{equation}
Again, this result holds for $b_*\not = \varepsilon$ and an extra multiplicative factor $(1+\ln(1+n))$ appears when $b_*=\varepsilon$.

In the second part of the paper, we present examples that fit our general framework. The key is to prove the $(p,b_*,\varepsilon)$ - coupling property~\eqref{i2'} between the approximation family~$\Theta$ and the limit flow~$\theta$. In most cases, this is done by using a global contraction property on the coefficients and a short time analysis as in Section~\ref{sec:ex}.
 However, in
some specific examples related to the Langevin equation,  Eberle \cite{Eberle}  (see
also \cite{Schuh}) succeeds in proving the existence of the invariant measure and
exponential convergence to equilibrium under a much weaker assumption where the 
contraction with respect to the Euclidean norm has to be in force outside a
compact set only. The argument is based on the construction of a specific distance, which is
equivalent to the Euclidean distance, under which strict contraction holds on the whole space. On~\cite[Example 1]{Eberle},  we prove under the same conditions as the ones stated in~\cite{Eberle} that the Euler scheme converges, and we give as well a rate of convergence. In fact, the formalism developed in
our paper allows to use the estimates obtained in~\cite{Eberle} for the flow  in order to study the Euler approximation. 

Now, we give the main structure of the article.
The abstract framework is presented in Section \ref{Sec_Framework} together with our main general results. Section~\ref{sec_tools} provides interesting tools to fit our general framework. Namely, we show how the sewing lemma can help to get the coupling property~\eqref{i2'} for $W_1$ type distances, and give a Foster-Lyapunov type criterion to get uniform bounds on moments. Then,  we give in Section~\ref{sec_Eberle} the
results concerning the Langevin equation mentioned above. The following
Section~\ref{sec:ex} is devoted to some more examples. In Subsection \ref{sec:ref}, we consider a
stochastic equation with reflection driven by a Brownian motion with a $W_{2}$ distance. In Subsection~\ref{sec:neu}, we
deal with a recent neuronal model  which has been introduced in~\cite{DGLP} and~\cite{FoLo} and has recently been discussed in~\cite{CTV}. This is a mean field
type model driven by a Poisson Point process. We give two results, in terms
of $W_{1}$ and $W_{2}$ using different global contraction assumptions in each case. 
In Subsection \ref{sec:bol}, we consider a Boltzmann type
equation - that is an equation driven by a Poisson Point process with
intensity depending on the law of the solution. Such equations appear in
models of interacting particle systems. We consider two possible regimes: a finite
variation regime with the distance $W_{1}$ and 
a martingale regime with the distance $W_{2}$. The further analysis of the particle systems or of the approximation based on the ergodic measure arising from these law dependent examples will be done in a further research. 
As many examples consist in doing verification of the required hypotheses of our general framework, they are often similar in nature, and we have put some of them in the additional material in Appendix~\ref{App_additional}, keeping the most illustrative proofs in the main text.

 In general, throughout the article, unless constants are written explicitly, they may change from one line to the next even though they are denoted by the same letter $C$.

\section{Framework and main results}
\label{Sec_Framework}

We introduce first a general framework for convergence that will be applied
through out the article.

Let $(B,d)$ be a Polish metric space and $x_0\in B$ a given element of~$B$. For a probability $\mu$ on $B$ and $p\geq 1$, we denote 
\begin{equation} \label{def_pseudonorm}
\left\Vert \mu \right\Vert
_{p}^{p}=\int_{B} d(x,x_0)^{p}\mu (dx).
\end{equation}%
We note $\mathcal{P}(B)$ the space of
probability measures on $B$ and $\mathcal{P}_{p}(B)=\{\mu \in {\mathcal{P}}%
:\Vert \mu \Vert _{p}<\infty \}$ the set of probability measures with finite 
$p$ moment. We endow $\mathcal{P}_{p}(B)$ with the Wasserstein distance~$%
W_{p}$ defined by 
\begin{equation*}
	W_{p}^{p}(\mu ,\nu )=\inf_{\rho \in \Pi (\mu ,\nu
	)}\int_{B}\int_{B}d(x,y)^{p}\rho (dx,dy),
	\end{equation*}
where $\Pi (\mu ,\nu )=\{\rho \in {\mathcal{P}}(B\times B):\forall A\in 
\mathcal{B}(B),\ \rho (A\times B)=\mu (A),\rho (B\times A)=\nu (A)\}$ is the
set of all probability measures on $B\times B$ with marginals $\mu $ and $%
\nu $. 
We recall that $\mathcal{P}_{p}(B)$ is complete with respect to the distance 
$W_{p}$ (see e.g.~\cite[Theorem 6.9]{Villani}) and note that  $\left\Vert \mu \right\Vert
_{p}=W_p(\mu,\delta_{x_0})$. We denote for two
probability measures $\mu ,\nu \in {\mathcal{P}}_{p}(B)$, 
\begin{equation*}
\Gamma _{p}(\mu ,\nu )=1+\left\Vert \mu \right\Vert _{p}^{p}+\left\Vert \nu
\right\Vert _{p}^{p}.
\end{equation*}%
\blue{It will be used as a shorthand notation in different upper bounds.}
By the triangle inequality, we have $d(y,x_0)^{p}\le 2^{p-1}(d(x,x_0)^{p}+d(x,y)^p)$ and get
for any $\rho \in \Pi(\mu,\nu)$
\begin{equation*}
	\left\Vert \nu \right\Vert
	_{p}^{p}=\int_{B} \int_{B} d(y,x_0)^{p}\rho (dx,dy)\le 2^{p-1}\left( \left\Vert \mu \right\Vert_{p}^{p} + \int_{B}\int_{B}d(x,y)^{p}\rho (dx,dy) \right).
	\end{equation*}
Taking the infimum over $\rho$, we get 
$\left\Vert \nu \right\Vert
_{p}^{p}\le 2^{p-1}( \left\Vert \mu \right\Vert_{p}^{p} + W_p^p(\mu,\nu) ) $
and then  
\begin{equation}
\Gamma _{p}(\mu ,\nu )\le 1+2^p\left\Vert \mu \right\Vert
_{p}^{p}+2^{p} W_{p}^{p}(\mu ,\nu )  \label{E3}
\end{equation}

The main object of interest in this section is a family of deterministic
operators $\Theta _{s,t}:\mathcal{P}_{p}(B)\rightarrow \mathcal{P}_{p}(B)$, $%
s\leq t$, such that $\Theta _{s,s}(\mu )=\mu $ for all $\mu \in \mathcal{P}%
_{p}(B)$. A simple example is when $B$ is a Banach space and $\Theta _{s,t}(\mu )$ is the distribution
of a one step Euler scheme with initial distribution~$\mu $ (for further
details, see e.g. \cite{PaPa} or \cite{BaQi}). General approximation schemes are
constructed as follows. We consider an increasing sequence of times $\pi
=\{t_{i},i\in \mathbb{N}\}$ with $0\leq t_{i}<t_{i+1}$, and we denote $%
\gamma _{i}=t_{i}-t_{i-1}\in (0,h)$ for some $h\leq 1$. We then define 
\begin{equation}
\Theta _{t_{0},t_{i}}^{\pi }(\mu )=\Theta _{t_{i-1},t_{i}}\circ ....\circ
\Theta _{t_{0},t_{1}}(\mu ).
\label{def_thetapi}
\end{equation}%
Following the usual approach of decreasing steps for approximating the
invariant measure we also assume that $(\gamma _{n})_{n\in \mathbb{N}^{\ast }}$ is a nonincreasing
sequence. If $\Theta _{s,t}(\mu )$ represents
the law of a one step Euler scheme with initial distribution~$\mu $, then $%
\Theta _{t_{0},t_{i}}^{\pi }(\mu )$ is the distribution of the Euler scheme
on the grid $\pi $ at the final time $t_{i}$ with initial law $\mu $. \blue{ If $\Theta_{s,t}$ enjoys the flow property $\Theta_{s,t}(\Theta_{r,s}(\mu))= \Theta_{r,t}(\mu)$ for $r\le s \le t$, then $\Theta_{t_{0},t_{i}}^{\pi }(\mu )=\Theta_{t_{0},t_{i}}(\mu )$.}

\begin{remark}
	Let us note here that in general, for law-dependent dynamics, the approximation $\Theta_{s,t}(\mu)$ cannot be directly simulated and has to be approximated as well. For example, the Euler scheme associated to a McKean-Vlasov SDE involves the current distribution that can be approximated by means of a particle systems or an ergodic time average. We leave the study of the convergence of a fully simulatable scheme in a related general framework for further research. \end{remark}

\blue{We now introduce the main assumption on the coupling of two families of
operators $\Theta^1_{s,t},\Theta^2_{s,t}:{\mathcal{P}}_p(B)\to {\mathcal{P}}%
_p(B)$, $0\le s\le t$.
\begin{definition}
\label{coupling} Let $b_{\ast }>0$ and $\varepsilon >0$. We say that two
families of deterministic operators $\Theta ^{1}$ and $\Theta ^{2}$ are $%
(p,b_{\ast },\varepsilon )-$coupled if one may find some constants $%
C_{\ast }>0$ and $h\in (0,1)$ such that, for every $s\leq t\leq s+h$ and
every $\mu ^{i}\in \mathcal{P}_{p}(B)$, $i=1,2$, 
\begin{equation}
W_{p}^{p}(\Theta _{s,t}^{1}(\mu ^{1}),\Theta _{s,t}^{2}(\mu ^{2}))\leq
(1-b_{\ast }(t-s))W_{p}^{p}(\mu ^{1},\mu ^{2})+C_{\ast }\Gamma_{p}(\mu
^{1},\mu ^{2})(t-s)^{1+\varepsilon }.  \label{AN1}
\end{equation}%
Last, we say that a family $\Theta _{s,t}:{\mathcal{P}}_{p}(B)\rightarrow {%
\mathcal{P}}_{p}(B)$, $0\leq s\leq t$, is $(p,b_{\ast },\varepsilon )-$%
self-coupled  if~\eqref{AN1} holds with $\Theta ^{1}=\Theta ^{2}=\Theta $.
\end{definition}
In the case where $\Theta^1$ is the flow associated to a continuous time process and $\Theta^2$ is an approximation of $\Theta^1$, the constant $b_*$ can be thought as the contraction behaviour of the process, while $\varepsilon$ is related to the accuracy of the approximation in short time. }

\blue{For technical reasons, we will need upper bounds on $p$-moments, and we introduce} the notation, for $\delta>0$, 
\begin{equation*}
M_{p}(\Theta ,\mu,\delta )=\sup_{\|\pi\|<\delta }\sup_{n}\left\Vert \Theta
_{t_{0},t_{n}}^{\pi }(\mu )\right\Vert _{p}^{p},
\end{equation*}%
with the supremum taken on all the partitions $\pi $ such that $(t_{i+1}-t_i)_{i\ge 0}$ is decreasing and all $n\in \N$. 
\begin{definition}\label{def_pbounded}
If there exists $\delta>0$ such that $M_{p}(\Theta ,\mu ,\delta)<\infty $ for every $\mu \in \mathcal{P}_{p}(B)$ we
say that $\Theta $ is $p-$bounded.
\end{definition}
The fact that $M_{p}(\Theta ,\mu, \delta )$ is finite is not trivial. So,
in Subsection~\ref{sec:51} we give a Foster Lyapunov type criterion which allows
to prove it in several concrete examples. 
However, there are other examples (see Remark~\ref{rk_boundedtheta})
where this result is already given and consequently we do not need the
Foster Lyapunov criterion.

\blue{We are now in position to state the main result of our paper.
\begin{theorem}
	\label{th:main}
	Let $ (\theta_{s,t})_{0\le s\le t}$ be a family of operators on $\mathcal{P}_p(B)$ with the flow property ($\theta_{s,t}\circ \theta_{r,s}= \theta_{r,t}$ for $r\le s\le t$). We assume that $\theta$ is time homogeneous (i.e. $\theta_{s,t}=\theta_{0,t-s}$) and  such that for all $s\le t$, $\mu \mapsto \theta_{s,t}(\mu)$ is continuous with respect to~$W_p$. We assume besides that $\theta$  is $p$-bounded and $(p,b_*,\varepsilon)$ self coupled.  
     Then, there exists a unique invariant measure  $ \nu \in \mathcal{P}_{p}(B)$  associated to $ \theta $, and for every   $%
	\mu \in \mathcal{P}_{p}(B)$, there exists $A\in \mathbb{R}_+$ such that  
	\begin{equation*}
		W_{p}^{p}(\theta _{s,t}(\mu ),\nu )\leq A e^{-\varepsilon \wedge
			b_{\ast }(t-s)}(1+(t-s)^{\mathbf{1}_{b_{\ast }=\varepsilon }}).
	\end{equation*}
	Let  $ (\Theta_{s,t})_{0\le s\le t}$ be a family of operators on $\mathcal{P}_p(B)$ such that  $\theta$ and $\Theta$ are $(p,b_*,\varepsilon)$ coupled. Let  
 $a\ge 0$, $\gamma _{n}=\frac{1}{a+n}$ and $%
	t_{n}=s+\gamma _{1}+...+\gamma _{n}$, $n\in {\mathbb{N}}$. Then, for
	every $\mu \in \mathcal{P}_{p}(B)$, there exists $A\in \R_+$ such that 
	\begin{equation*}
		W_{p}^{p}(\Theta _{s,t_{n}}^{\pi }(\mu ),\nu )\leq A n^{-b_{\ast }\wedge
			\varepsilon }(1+\ln (n+1)^{\mathbf{1}_{b_{\ast }=\varepsilon }}  ).
	\end{equation*}
	\end{theorem}
    Theorem~\ref{th:main} is a consequence of Proposition~\ref{prop:29} and Theorem~\ref{FINAL}. Note that here, the constant $A$ may depend on~$\mu$. Under further assumptions, we will be able to get $A\le C(1+\|\mu\|_p^p)$, with $C\in \R_+$ that does not depend on~$\mu$.   }

	\blue{Many examples that illustrate this theorem are given in Sections~\ref{sec_Eberle} and~\ref{sec:ex}. Here, we presents quickly two illustrative examples. The first one, perhaps the simplest one, is the Ornstein-Uhlenbeck process 
	\begin{equation}\label{OU_exactS}
		X^1_{s,t}(X^1)=X^1 -\int_s^t k X^1_{s,u}(X^1)  du+\sigma(W_t-W_s), \ s\le t,
	\end{equation}
	with $k,\sigma>0$ and $X^1$ being a $p$-integrable real random variable independent of $W$.  For $\mu^1 \in \mathcal{P}_p(\mathbb{R})$, we define $\Theta^1_{s,t}(\mu^1) $ as the law of $X^1_{s,t}(X^1)$ when $X^1\sim \mu^1$.  For $\mu^2\in \mathcal{P}_p(\mathbb{R})$ and $X^2\sim \mu^2$ independent of $W$, we define  $\Theta^2_{s,t}(\mu^2)$ as the law of the Euler-Maruyama scheme $X^2_{s,t}(X^2)$, with 
		\begin{equation}\label{OU_euler}
		X^2_{s,t}(X^2)=(1-k(t-s) X^2) +\sigma(W_t-W_s), \ s\le t.
	\end{equation}
	In Appendix~\ref{App_OU}, we prove that $\Theta^1$ is $p$-bounded and $(p,b_*p,\varepsilon)$ self-coupled for any $\varepsilon>0$ and $b_*\in (0,k)$. Besides, $\Theta^1$ and $\Theta^2$ are $(p,b_*p,\varepsilon)$ coupled for any $\varepsilon\in(0,p)$ and $b_*\in (0,k)$. Therefore, Theorem~\ref{th:main} gives, for any $\mu \in \mathcal{P}_p(\mathbb{R})$ and $\zeta \in(0,1)$, the existence of $A\ge 0$ such that
	$$ W_p^p(\theta_{s,t}(\mu),\nu)\le A e^{-pb_*(t-s)}, \ W_p^p(\Theta^{\pi}_{s,t_n}(\mu),\nu) \le A n^{-\zeta p}.$$
	By explicit calculations, we show on the other hand that $W_p^p(\theta_{s,t}(\mu),\nu)=_{t\to \infty}O(e^{-k(t-s)})$ and $W_p^p(\Theta^{\pi}_{s,t_n}(\mu),\nu)=_{n\to \infty}O(n^{-{k \wedge 1}})$. Therefore, the rates of convergence given by Theorem~\ref{th:main} are almost optimal on this example. 
}

\blue{The second example is the  McKean Vlasov equation 
\begin{equation*}
		X^1_{s,t}=X^1 +\int_s^t \beta(X^1_{s,u}, \mathcal{L}(X^1_{s,u}) )  du+ \int_s^t \sigma(X^1_{s,u}, \mathcal{L}(X^1_{s,u}) )  d W_u, \ s\le t,
\end{equation*}
with $\beta:  \mathbb{R}^d \times \mathcal{P}_2(\mathbb{R}^d ) \to \mathbb{R}^d$,  $\sigma:  \mathbb{R}^d \times \mathcal{P}_2(\mathbb{R}^d ) \to \mathbb{R}^{d\times d}$ and $W$ is a $d$-dimensional Brownian motion. To keep simple notation, we assume $d=1$ even if what follows works for any $d\ge 1$. We make the following assumption: there exists $C,k, L\ge 0$ such that for all $x,y \in \R$ and $\mu^1,\mu^2 \in \mathcal{P}_2(\R)$,
\begin{enumerate}
	\item $|\beta(x,\mu^1)-\beta(y,\mu^2)|^2+|\sigma(x,\mu^1)-\sigma(y,\mu^2)|^2\le C (|x-y|^2 +W_2^2(\mu^1,\mu^2))$,
	\item $2(\beta(x,\mu^1)-\beta(y,\mu^2))(x-y)+|\sigma(x,\mu^1)-\sigma(y,\mu^2)|^2\le L W_2^2(\mu^1,\mu^2) -k |x-y|^2$.
\end{enumerate}
Then, \cite[Theorem 2.1]{Wang} gives the strong existence and uniqueness, and we note $\Theta^1_{s,t}(\mu^1)$ the law of $X^1_{s,t}$ when $X^1\sim \mu^1$. If $k>L$, \cite[Theorem 3.1]{Wang} gives $W_2^2(\Theta^1_{s,t}(\mu^1),\Theta^1_{s,t}(\mu^2))\le W_2^2(\mu^1,\mu^2) e^{-(k-L)(t-s)}$, and therefore $\Theta^1$ is $(2,b_*,\varepsilon)$ self coupled for any $b_* \in (0,k-L)$ and  $\varepsilon>0$.  \\ Let $\Theta^2_{s,t}(\mu^2)$ the law of $$X^2_{s,t}=X^2+\beta(X^2,\mu^2)(t-s)+\sigma(X^2,\mu^2)(W_t-W_s)$$ when $X^2\sim \mu^2$. By Itô formula and using (2), we get
\begin{align*}
	\mathbb{E}[(X^1_{s,t}-X^2_{s,t})^2]\le& \mathbb{E}[(X^1-X^2)^2] \\&+\int_s^t L W_2^2(\mathcal{L}(X^1_{s,u}), \mathcal{L}(X^2_{s,u}) ) -k \mathbb{E}[(X^1_{s,u}-X^2_{s,u})^2] du   \\
	&+ \int_s^t 2\mathbb{E}[(\beta(X^2_{s,u},\mathcal{L}(X^2_{s,u}) ) -\beta(X^2,\mu^2))(X^1_{s,u}-X^2_{s,u})] du  \\
	&\hspace{-3cm}+ \int_s^t \mathbb{E}[ (\sigma(X^1_{s,u},\mathcal{L}(X^1_{s,u}))-\sigma(X^2,\mu^2 ) )^2-(\sigma(X^1_{s,u},\mathcal{L}(X^1_{s,u}))-\sigma(X^2_{s,u},\mathcal{L}(X^2_{s,u}) ) )^2] du  
\end{align*} 
We now use $W_2^2(\mathcal{L}(X^1_{s,u}), \mathcal{L}(X^2_{s,u}) )\le \mathbb{E}[(X^1_{s,u}-X^2_{s,u})^2]$ for the first term of the right hand side. For $\delta>0$ arbitrarily small, we use  $2ab\le \delta a^2+ \delta^{-1}b^2$ and $(a+b)^2\le a^2 (1+\delta) +b^2(1+\delta^{-1})$ respectively for the second and third terms, together with the Lipschitz property of $b$ and $\sigma$, and we get 
\begin{align*}
	\mathbb{E}[(X^1_{s,t}-X^2_{s,t})^2] \le &\mathbb{E}[(X^1-X^2)^2]+ \int_s^t (L +\delta -k) \mathbb{E}[(X^1_{s,u}-X^2_{s,u})^2] du \\& + C_\delta \int_s^t \mathbb{E}[(X^2_{s,u}-X^2)^2]du.  
\end{align*} 
By standard estimates, we have  $\mathbb{E}[(X^2_{s,u}-X^2)^2]\le \Gamma_2(\mu^2,\mu^2)(u-s)^{1/2}$. By taking $X^1$ and $X^2$ optimally coupled for $W_2$ and using Gronwall lemma, we deduce that $\Theta^1$ and $\Theta^2$ are $(2,b_*,1/2)$ coupled for any $b_* \in (0,k-L)$. We can thus apply Theorem~\ref{th:main} to $\theta=\Theta^1$ and $\Theta=\Theta^2$ and  get\begin{equation}\label{cv_McKean}W_{2}^{2}(\Theta_{s,t_{n}}^{\pi }(\mu ),\nu )\leq A n^{- b_*\wedge \frac{1}{2}  },  \end{equation}
for any $b_*\in (0,1/2]\cap(0,k-L)$, where $t_n=s +\sum_{i=1}^n \frac {1}{1+i}$.\\
}
\begin{remark}	 As the above example shows, the most important constants that appear in Definition~\ref{coupling}, Eq.~\eqref{AN1},  are $b_{\ast }$ and $%
	\varepsilon >0$. \blue{In practical examples, $b_*$ corresponds to the mean-reverting behaviour of the process while $\varepsilon$ is mostly related to the short time approximation error of the chosen scheme (see also examples in Section~\ref{sec:ex}).} They determine the speed of
	convergence in our approximation results of the stationary distribution in Theorem~\ref{th:main}. \blue{This will appear clearly in the proof of Lemma~\ref{lem_wass}.}
	In contrast, the constant $C_{\ast }$ only influences the constants
	multiplying the speed on convergence, and is in
	this sense is less important.
\end{remark}

\subsection{Coupling on a time grid}

In the sequel we will consider a decreasing sequence $\gamma _{n}\in
(0,1),n\in \N$ such that \blue{$\lim_{n \to \infty }\gamma_n=0$,} $\sum_{n\ge 1}\gamma _{n}=\infty $ and we denote 
\begin{equation}
\pi =\{t_{n},n\in \N\}\quad \text{with}\quad t_{n}=t_{0}+\sum_{i=1}^{n}\gamma _{i}.
\label{pi}
\end{equation}%

For $b>0$ and $\varepsilon >0$ we define 
\begin{equation}
\sigma _{b,\varepsilon }(n):=\sum_{i=1}^{n}e^{-b(t_{n}-t_{i})}\gamma
_{i}^{1+\varepsilon }.  \label{sigma}
\end{equation}
\blue{ Let us observe that $e^{-b(t_{n}-t_{i})}\gamma
_{i}^{1+\varepsilon }\le \gamma_1^\varepsilon \int_{t_{i-1}}^{t_i} e^{-b(t_{n}-s)} e^{b\gamma_1} ds $ because $\gamma$ is decreasing, and therefore 
\begin{equation}
\sigma _{b,\varepsilon }(n)\le \gamma
_{1}^{\varepsilon } \frac{e^{b\gamma_1}}{b}.  \label{majo_sigma}
\end{equation}
}

\begin{lemma}\label{lem_wass}
 Let $\Theta^{1}$ and $\Theta^{2}$ be families  which are $%
(p,b_{\ast },\varepsilon )$-coupled for some $b_{\ast },\varepsilon >0$.
 We note $C_{\ast }>0$, 
$h\in (0,1)$ some constants such that~\eqref{AN1} holds for $\Theta
_{s,t}^{1}$ and~$\Theta _{s,t}^{2}$, $s\leq t\leq s+h$. We also assume that  $\Theta ^{1}$ is $p-$bounded and $%
h\in (0,1)$ is sufficiently small to have $M_{p}(\Theta^1 ,\mu,h )<\infty$ and \blue{
\begin{equation}
2^p  C_{\ast } h^{\varepsilon} \frac{e^{bh}}{b} \leq  \frac{1%
}{2}. \label{AN14bis}
\end{equation}}
Then, for any positive decreasing sequence $(\gamma_n)_{n\ge 1}$ such that $\gamma_1<h$ and $(t_n)_{n\ge 0}$ be defined as in~\eqref{pi}, we have for every $n\in {\mathbb{N}}$ 
\begin{align}
&W_{p}^{p}(\Theta _{t_{0},t_{n}}^{1,\pi }(\mu ^{1}),\Theta
_{t_{0},t_{n}}^{2,\pi }(\mu ^{2}))  \label{AN15bis}\\
&\leq e^{-b_{\ast
}(t_{n}-t_{0})}W_{p}^{p}(\mu ^{1},\mu ^{2})+ 2^{p+1}C_* ( 1+ M_{p}(\Theta ^{1},\mu ^{1},h)+W_p^p(\mu_1,\mu_2))\sigma _{b_{\ast },\varepsilon }(n). \notag
\end{align}%
Besides, $M_{p}(\Theta^2 ,\mu,h )<\infty$ for every $\mu \in \mathcal{P}_p(B)$ and $\Theta ^{2}$ is $p-$bounded.
\end{lemma}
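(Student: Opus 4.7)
Denote $\mu^i_n = \Theta^{i,\pi}_{t_0,t_n}(\mu^i)$ and $a_n = W_p^p(\mu^1_n, \mu^2_n)$, and write $K = 2^{p+1}C_*(1 + M_p(\Theta^1,\mu^1,h) + W_p^p(\mu^1,\mu^2))$. My plan is to iterate the one-step coupling inequality~\eqref{AN1} and prove~\eqref{AN15bis} by induction on~$n$, with the technical assumption~\eqref{AN14bis} acting precisely to absorb a nonlinear remainder in the inductive step.

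Applying~\eqref{AN1} to the final step yields $a_n \le (1-b_*\gamma_n) a_{n-1} + C_* \Gamma_p(\mu^1_{n-1}, \mu^2_{n-1})\gamma_n^{1+\varepsilon}$. Using~\eqref{E3} together with the $p$-boundedness of $\Theta^1$ (so $\|\mu^1_{n-1}\|_p^p \le M_p(\Theta^1,\mu^1,h)$), this becomes $a_n \le (1-b_*\gamma_n) a_{n-1} + C_*(1 + 2^p M_p(\Theta^1,\mu^1,h) + 2^p a_{n-1})\gamma_n^{1+\varepsilon}$. I would then establish by induction that $a_n \le e^{-b_*(t_n-t_0)} W_p^p(\mu^1,\mu^2) + K \sigma_{b_*,\varepsilon}(n)$, which is exactly~\eqref{AN15bis}. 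The inductive step rests on $1 - b_*\gamma_n \le e^{-b_*\gamma_n}$, which lets the exponential factor propagate from $(t_{n-1}-t_0)$ to $(t_n-t_0)$, together with the identity $e^{-b_*\gamma_n}\sigma_{b_*,\varepsilon}(n-1) = \sigma_{b_*,\varepsilon}(n) - \gamma_n^{1+\varepsilon}$ which is immediate from the definition of~$\sigma_{b_*,\varepsilon}$. The nonlinear contribution $2^p C_* K\sigma_{b_*,\varepsilon}(n-1)\gamma_n^{1+\varepsilon}$ that appears when I substitute the inductive bound for $a_{n-1}$ is handled by~\eqref{AN14bis}, which gives $2^pC_*\sigma_{b_*,\varepsilon}(n-1)\le 1/2$ and hence bounds it by $(K/2)\gamma_n^{1+\varepsilon}$. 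Combined with the linear piece $C_*(1 + 2^pM_p + 2^pW_p^p(\mu^1,\mu^2))\gamma_n^{1+\varepsilon} \le (K/2)\gamma_n^{1+\varepsilon}$, these two halves restore exactly the $K\gamma_n^{1+\varepsilon}$ subtracted by the identity, closing the induction.

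For the $p$-boundedness of $\Theta^2$ on arbitrary grids~$\pi$ with step sizes at most~$h$, I would apply the same one-step bound with $\mu^1=\mu^2=\mu$; now $a_0=0$ and the recursion reads $a_n \le (1 - b_*\gamma_n + 2^pC_*\gamma_n^{1+\varepsilon})a_{n-1} + C_*(1+2^pM_p(\Theta^1,\mu,h))\gamma_n^{1+\varepsilon}$. Taking $h$ small enough that $2^pC_*h^\varepsilon\le b_*/2$ (which the hypotheses allow us to impose), the contraction coefficient is at most $1-(b_*/2)\gamma_n$, and iteration gives $a_n \le C\sum_{k=1}^n e^{-(b_*/2)(t_n-t_k)}\gamma_k^{1+\varepsilon}$; this is bounded uniformly in $n$ and~$\pi$ by a standard Riemann-sum comparison with $\int_0^{t_n} e^{-(b_*/2)(t_n-s)}ds$. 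The triangle inequality $\|\mu^2_n\|_p \le \|\mu^1_n\|_p + W_p(\mu^1_n,\mu^2_n)$ then yields $M_p(\Theta^2,\mu,h)<\infty$. I expect the main delicate point to be the bookkeeping in the inductive step of the first part: recognising that the constant $1/2$ in~\eqref{AN14bis} has been tuned precisely so that the nonlinear $a_{n-1}\gamma_n^{1+\varepsilon}$ remainder is reabsorbed without degrading the exponential rate below~$b_*$.
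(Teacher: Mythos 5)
Your induction proof of \eqref{AN15bis} is correct, but it follows a genuinely different route from the paper's. The paper writes $a_n=e^{-b_*(t_n-t_0)}a_0+\sum_{k=1}^n\bigl(e^{-b_*(t_n-t_k)}a_k-e^{-b_*(t_n-t_{k-1})}a_{k-1}\bigr)$, applies \eqref{AN1} and \eqref{E3} to each increment, and argues in two passes: the smallness hypothesis \eqref{AN14bis} is first used to extract the a priori bound $\max_{k\le n}a_k\le 2a_0+M_p(\Theta^1,\mu^1,h)+1$ (its \eqref{AN11}), which is then re-inserted into the same increment estimate to yield \eqref{AN15bis}. You instead take the final bound itself as inductive hypothesis and use \eqref{AN14bis} to absorb the nonlinear term $2^pC_*a_{n-1}\gamma_n^{1+\varepsilon}$ once the inductive bound is substituted; your bookkeeping ($1-b_*\gamma_n\le e^{-b_*\gamma_n}$, the identity $e^{-b_*\gamma_n}\sigma_{b_*,\varepsilon}(n-1)=\sigma_{b_*,\varepsilon}(n)-\gamma_n^{1+\varepsilon}$, and the two halves $K/2+K/2$ restoring $K\gamma_n^{1+\varepsilon}$, using $1\le 2^p$ for the linear half) checks out, and this single-pass argument is cleaner for the main estimate. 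What the paper's two-pass structure buys is precisely \eqref{AN11}, from which, with $\mu^1=\mu^2=\mu$ (so $a_0=0$) and the inequality $\Vert\nu\Vert_p^p\le 2^{p-1}(\Vert\mu\Vert_p^p+W_p^p(\mu,\nu))$, the bound $M_p(\Theta^2,\mu,h)\le 2^pM_p(\Theta^1,\mu,h)+2^{p-1}$ is read off with no further condition on $h$. For that last conclusion you instead re-run the recursion on arbitrary grids and require the extra smallness condition $2^pC_*h^{\varepsilon}\le b_*/2$; this condition is not implied by \eqref{AN14bis} (which at $k=1$ only controls $2^pC_*\gamma_1^{1+\varepsilon}$), so strictly you obtain $M_p(\Theta^2,\mu,h')<\infty$ for a possibly smaller $h'$, which still gives $p$-boundedness of $\Theta^2$ but not literally the displayed claim for the given $h$. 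This detour is unnecessary: your inductive bound combined with \eqref{AN14bis} already gives $K\sigma_{b_*,\varepsilon}(n)\le 1+M_p(\Theta^1,\mu^1,h)+a_0$, hence $a_n\le 2a_0+M_p(\Theta^1,\mu^1,h)+1$, i.e. exactly \eqref{AN11}, and the paper's triangle-type step then yields the boundedness of $\Theta^2$ without touching $h$ (on the grids covered by \eqref{AN14bis}, which is also the scope of the paper's own argument).
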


\begin{proof} We denote $a_{k}=$ $W_{p}^{p}(\Theta
_{t_{0},t_{k}}^{1,\pi }(\mu ^{1}),\Theta _{t_{0},t_{k}}^{2,\pi }(\mu ^{2}))$ \blue{ and first prove an upper bound for this sequence.} 
We write, for $n\ge 1$, 
\begin{equation*}
a_{n}=e^{-b_{\ast }(t_{n}-t_{0})}a_{0}+\sum_{k=1}^{n}(e^{-b_{\ast
}(t_{n}-t_{k})}a_{k}-e^{-b_{\ast }(t_{n}-t_{k-1})}a_{k-1}).
\end{equation*}%
\blue{We first observe 
\begin{equation*}
\forall n, \  \max_{k\leq n}\left\Vert \Theta _{t_{0},t_{k-1}}^{1,\pi }(\mu
^{1})\right\Vert _{p}^{p}\leq M_{p}(\Theta ^{1},\mu ^{1},h)=:{\bf M}.
\end{equation*}%
}
Using the coupling property (\ref{AN1})\textbf{\ }first and (\ref{E3})\
then, we obtain 
\begin{align}
&e^{-b_{\ast }(t_{n}-t_{k})}a_{k}-e^{-b_{\ast }(t_{n}-t_{k-1})}a_{k-1}
\label{AN12} \\
&=e^{-b_{\ast }(t_{n}-t_{k})}(1-e^{-b_{\ast }\gamma
	_{k}})a_{k-1}+e^{-b_{\ast }(t_{n}-t_{k})}(a_{k}-a_{k-1})  \notag \\
&\leq e^{-b_{\ast }(t_{n}-t_{k})}(b_{\ast }\gamma _{k}a_{k-1}-b_{\ast
}\gamma _{k}a_{k-1}+C_{\ast }\Gamma _{p}(\Theta _{t_{0},t_{k-1}}^{1,\pi
}(\mu ^{1}),\Theta _{t_{0},t_{k-1}}^{2,\pi }(\mu ^{2}))\gamma
_{k}^{1+\varepsilon })  \notag \\
&\leq C_{\ast }e^{-b_{\ast }(t_{n}-t_{k})}\!\!\left( \!1+ \! 2^p \left\Vert \Theta
_{t_{0},t_{k-1}}^{1,\pi }(\mu ^{1})\right\Vert
_{ p}^{p}\!+\! 2^{p} W_{p}^{p}(\Theta _{t_{0},t_{k-1}}^{1,\pi }(\mu
^{1}),\Theta _{t_{0},t_{k-1}}^{2,\pi }(\mu ^{2}))\! \right) \!\gamma
_{k}^{1+\varepsilon }  \notag \\
&\leq C_{\ast }e^{-b_{\ast }(t_{n}-t_{k})}\left(1+
2^p {\bf M} + 2^p a_{k-1} \right) \gamma _{k}^{1+\varepsilon }.  \notag
\end{align}
Taking the sum, we get
\begin{align*}
a_{n} &\leq e^{-b_{\ast }(t_{n}-t_{0})}a_{0}+C_{\ast }\left(1+ 2^p {\bf M}
\right) \sum_{k=1}^{n}e^{-b_{\ast }(t_{n}-t_{k})}\gamma _{k}^{1+\varepsilon
}\\&\quad+C_{\ast }2^p \sum_{k=1}^{n}e^{-b_{\ast }(t_{n}-t_{k})}\gamma
_{k}^{1+\varepsilon }a_{k-1} \\
&\leq e^{-b_{\ast }(t_{n}-t_{0})}a_{0}+C_{\ast }\left(1+ 2^p {\bf M} + 2^p \max_{k\leq n-1}a_{k}
\right)  \sigma _{b_{\ast
	},\varepsilon }(n)  \\
&\leq a_{0}+\frac 12\left( 2^{-p}+{\bf M}  +
\max_{k\leq n-1}a_{k}\right), 
\end{align*}
the last inequality being a consequence of our hypothesis \eqref{AN14bis} \blue{and the upper bound~\eqref{majo_sigma}. Taking the maximum, }
this gives%
\begin{equation*}
\max_{i\leq n}a_{i}\leq a_{0}+ \frac 12 \left( 1+{\bf M} \right) +\frac{1}{2}%
\times \max_{k\leq n-1}a_{k}
\end{equation*}%
and finally %
\begin{equation}
\max_{i\leq n}a_{i}\leq 2a_{0} + {\bf M} +1 .
\label{AN11}
\end{equation}

We now come back to (\ref{AN12}) and we use~\eqref{E3} and (\ref{AN11}) in
order to get
\begin{align*}
e^{-b_{\ast }(t_{n}-t_{k})}a_{k}-e^{-b_{\ast }(t_{n}-t_{k-1})}a_{k-1} 
&\leq e^{-b_{\ast }(t_{n}-t_{k})}C_{\ast }\left(1+2^p {\bf M} +2^p a_{k-1}  
\right) \gamma
_{k}^{1+\varepsilon } \\
&\leq e^{-b_{\ast }(t_{n}-t_{k})}2^{p+1}C_{\ast }\left(1+ {\bf M} +a_{0}  
\right) \gamma
_{k}^{1+\varepsilon } .
\end{align*}

 Taking the sum and using Lemma~\ref{lem_tech}, we conclude with 
 \begin{align*}
 a_{n} &\leq e^{-b_{\ast }(t_{n}-t_{0})}a_{0}+2^{p+1} C_* (1+{\bf M}+a_0) \sum_{k=1}^{n}e^{-b_{\ast }(t_{n}-t_{k})}\gamma _{k}^{1+\varepsilon }
 \\
 &=e^{-b_{\ast }(t_{n}-t_{0})}a_{0}+2^{p+1} C_* (1+{\bf M}+a_0) \sigma_{b_{\ast },\varepsilon }(n) . 
 \end{align*}
Now, we observe that 
$$\max_{k\leq n}\left\Vert \Theta _{t_{0},t_{k-1}}^{2,\pi }(\mu )\right\Vert
_{p}^{p}\leq 2^{p-1} \max_{k\leq n}\left\Vert \Theta _{t_{0},t_{k-1}}^{1,\pi }(\mu )\right\Vert
_{p}^{p} + 2^{p-1}  \max_{k\leq n} a_k.$$
\blue{Since the sequence $(t_i)_{i\ge 0}$ is arbitrary among those such that $t_1-t_0<h$ and $(t_{i+1}-t_i)_{i\ge 0}$ is decreasing,} this gives by~\eqref{AN11}, $M_{p}(\Theta^2 ,\mu,h )\le 2^p M_{p}(\Theta^1 ,\mu,h ) +2^p a_0 +2^{p-1} <\infty$ for every $\mu \in \mathcal{P}_p(B)$.
\end{proof}

To go further, we will make the additional assumption on $(\gamma_n)$  
\blue{
\begin{equation}\label{def_varpi}
\varpi(\pi ):=\limsup_{n\rightarrow \infty }\frac{\gamma
_{n}-\gamma _{n+1}}{\gamma _{n+1}^{2}}\in [0,\infty ),
\end{equation}%
that is needed for} the following technical lemma.

\begin{lemma}
\label{lem_tech} \textbf{A}. 
One
may find $C_{b,\varepsilon }$ such that for all $ n\in\mathbb{N}^* $
\blue{\begin{eqnarray}
\sigma _{b,\varepsilon }(n) \leq \begin{cases}  C_{b,\varepsilon }\gamma_{n}^{\varepsilon } \text{ if }  b> \varepsilon \varpi(\pi),\\
C_{b,\lambda }\gamma _{n}^{\lambda } \text{ if }  \varpi(\pi)>0 \text{ and }\lambda <\frac{b }{\varpi(\pi )}\leq \varepsilon . 
\end{cases}
\end{eqnarray}
}
The constant $C_{b,\varepsilon }$ depends on the sequence $(\gamma
_{n})_{n\in \N}$ as well, but does not depend on~$n$.

\textbf{B}. For the particular case $\gamma _{n}=\frac{1}{n+h^{-1}}$, one has $%
\varpi(\pi )=1$ and 
$$ \sigma
_{b,\varepsilon }(n)\leq \begin{cases} C_{b,\varepsilon }n^{-b\wedge \varepsilon } \text{ if }  b\neq \varepsilon,\\
C_{b,\varepsilon
}n^{-\varepsilon }\ln (1+n) \text{ if }  b= \varepsilon. 
\end{cases}$$
 Besides, for $s<t$, $t_{n}=s+\sum_{i=1}^{n}\gamma _{i}$ and  $%
n=n(t)\in \N$ such that $t_{n}\leq t<t_{n+1}$, one may find a constant that we still denote $C_{b,\varepsilon }$ such that 
\begin{equation}
\sigma _{b,\varepsilon }(n)\leq C_{b,\varepsilon }e^{-b\wedge
\varepsilon (t-s)}(1+(t-s)^{1_{b=\varepsilon }}).  \label{AN6'}
\end{equation}
\end{lemma}

A similar result has been used by Pag\`es and Panloup~\cite[Lemma A.3(i)]{PaPa}
with $\varepsilon =1$ and later on by Bally and Qin~\cite[Lemma A.1(B)]{BaQi}%
. We give a proof of our slightly different version in the Appendix~\ref%
{sec:app}.
\blue{
\begin{remark}
Let $\beta\in(0,1)$ and $\gamma _{n}=\frac{1}{n^\beta+h^{-1}}$. This sequence fulfills the required assumptions with $\varpi(\pi)=0$. Then, we have $t_n\sim \frac{n^{1-\beta}}{1-\beta}$ as $n\to \infty$. By Lemma~\ref{lem_tech}, we have $\sigma_{b,\varepsilon}(n) \le C_{b,\varepsilon} n^{-\beta \varepsilon}$. Besides,  for $s<t$, $t_{n}=s+\sum_{i=1}^{n}\gamma _{i}$ and  $%
n\in \N$ such that $t_{n}\leq t<t_{n+1}$, we have $n \sim \left( (1-\beta) (t-s) \right)^{\frac{1}{1-\beta}}$, so that $\sigma_{b,\varepsilon}(n) \le C (t-s)^{\frac{-\beta \varepsilon}{1-\beta}}$. As $t-s \to \infty$, this convergence is less sharp than~\eqref{AN6'}. For this, and also to avoid multiple cases, we will only work with $\gamma _{n}=\frac{1}{n+h^{-1}}$ in the paper. 
\end{remark}
}

\begin{corollary}\label{cor_speed}
We suppose that the hypotheses of Lemma~\ref{lem_wass} are in force and we
consider the grid $t_{n}=t_{0}+\sum_{i=1}^{n}\frac{1}{i+h^{-1}}.$ Then, for $b_*\neq \varepsilon$, $\mu^1,\mu^2 \in \mathcal{P}_p(B)$,
\begin{align*}
	W_{p}^{p}(\Theta _{t_{0},t_{n}}^{1,\pi }(\mu ^{1}),\Theta
	_{t_{0},t_{n}}^{2,\pi }(\mu ^{2}))\leq &A n^{-b_{\ast }\wedge \varepsilon}, \ 
\end{align*}
with $A=\left( (2^{p+1}C_*C_{b_{\ast },\varepsilon
	}+1) W_{p}^{p}(\mu ^{1},\mu
	^{2}) + 2^{p+1}C_*C_{b_{\ast },\varepsilon
	} (1+ M_{p}(\Theta ^{1},\mu ^{1},h)) \right)$. 
In the case $%
b_*=\varepsilon $, one has to multiply this upper bound by $1+\ln (1+n)$.\footnote{In the same way, we have $ W_{p}^{p}(\Theta _{t_{0},t_{n}}^{1,\pi }(\mu ^{1}),\Theta
	_{t_{0},t_{n}}^{2,\pi }(\mu ^{2}))\leq  A (1+ (t_{n}-t_{0}))e^{-b_{\ast
		}\wedge \varepsilon (t_{n}-t_{0})}.$} \end{corollary}
This is a direct application of Lemma~\ref{lem_wass} and Lemma~\ref{lem_tech}~{\bf B}.

\blue{\begin{remark}
In Lemma~\ref{lem_wass}, the condition $\gamma_1<h$ is required to have $\gamma_n \in (0,h)$ for all $n$, which allows to use~\eqref{AN1} for all time steps.  This is why we then take $\gamma_n= \frac{1}{n+h^{-1}}$ in Corollary~\ref{cor_speed}.\\
If  the condition $\gamma_1<h$ is not satisfied, we can still define $n(h)=\inf\{ n\ge 1 : \gamma_n<h \}$  apply Lemma~\eqref{lem_wass} (and Corollary~\ref{cor_speed}) for $n\ge n(h)$ on the grid $\{t_{n(h)-1},\dots,t_n\}$. For example, let $\gamma_n= \frac{1}{a+n}$ for some $a\ge 0$ and $t_n=t_{0}+\sum_{i=1}^{n}\frac{1}{a+i}$. We have  by Corollary~\ref{cor_speed}
\begin{equation}\label{AN16}
    W_{p}^{p}(\Theta _{t_{0},t_{n}}^{1,\pi }(\mu ^{1}),\Theta
	_{t_{0},t_{n}}^{2,\pi }(\mu ^{2}))\leq A (n-n(h)+1)^{-b_{\ast }\wedge \varepsilon} \le A' n^{-b_{\ast }\wedge \varepsilon},\end{equation}
with $A=(2^{p+1}C_*C_{b_{\ast },\varepsilon
	}+1) W_{p}^{p}(\Theta _{t_{0},t_{n(h)-1}}^{1,\pi }(\mu ^{1}) , \Theta _{t_{0},t_{n(h)-1}}^{2,\pi }(\mu ^{2})) + 2^{p+1}C_*C_{b_{\ast },\varepsilon
 	} (1+ M_{p}(\Theta ^{1},\Theta _{t_{0},t_{n(h)-1}}^{1,\pi }(\mu ^{1}) ,h)) $ and $A'=A n(h)^{b_{\ast }\wedge \varepsilon}$.\\
    Besides, if $\Theta^1$ and  $\Theta^2$ satisfy the  $p$-Foster Lyapunov criterion~\eqref{AN2}, we have $A'\le \tilde{A}'\Gamma_p(\mu^1,\mu^2)$ for a constant $\tilde{A}'\ge 0$ that does not depend on $(\mu^1,\mu^2)$. 
\end{remark}
}


\subsection{Invariant measure}

Let $\theta _{s,t}$ $:\mathcal{P}_{p}(B)\rightarrow \mathcal{P}_{p}(B),s\leq
t$ be a flow, that is $\theta _{r,t}\circ \theta _{s,r}=\theta _{s,t},s\le r
\le t$. 
\begin{remark}
For a flow~$\theta$, we have 
$$M_{p}(\theta ,\mu,\delta )=\sup_{\pi : t_i-t_{i-1}\le \delta }\sup_{n}\left\Vert \theta
_{t_{0},t_{n}}^{\pi }(\mu )\right\Vert _{p}^{p}=\sup_{t_0<t_n}\left\Vert \theta
_{t_{0},t_{n}}(\mu )\right\Vert _{p}^{p}.$$
This does not depend on~$\delta$, and we simply note this quantity $M_{p}(\theta ,\mu)$ in this case.
\end{remark}

We look in this subsection for hypotheses which guarantee that $%
\theta$ admits an invariant probability measure $\nu$, i.e. $\nu
=\theta_{s,t}(\nu)$ for every $s<t$. Obviously, if such a~$\nu$ exists, it
has to be a stationary law since $\nu=\lim_{t\to \infty}\theta_{s,t}(\nu)$.
We start with a preliminary estimate.

\begin{corollary}
\label{cor:2.8} Let $\theta _{s,t}$ be a flow which is $p$-bounded and
is $(p,b_{\ast },\varepsilon )$-self-coupled for some $b_{\ast
},\varepsilon >0$. We note $C_{\ast }>0$, $h\in (0,1)$ some constants such
that~\eqref{AN1} and~\eqref{AN14bis} hold. Then, for every $\mu ^{1},\mu ^{2}\in \mathcal{P}%
_{p}(B)$ and every $s<t$ 
\begin{equation*}
W_{p}^{p}(\theta _{s,t}(\mu ^{1}),\theta _{s,t}(\mu ^{2}))\leq A e^{-b_{\ast }\wedge \varepsilon (t-s)}(1+
(t-s)^{1_{b_*=\varepsilon }})
\end{equation*}%
where $A=\left( (2^{p+1}C_*\tilde{C}_{b_{\ast },\varepsilon
	}+1) W_{p}^{p}(\mu ^{1},\mu
	^{2}) + 2^{p+1}C_* \tilde{C}_{b_{\ast },\varepsilon
	} (1+ M_{p}(\theta ,\mu ^{1})) \right)$ and $\tilde{C}_{b_{\ast },\varepsilon
	}=C_{b_{\ast },\varepsilon
    }+1$. \blue{ Besides, $A\le C (\Gamma_p(\mu^1,\mu^2)+ M_p(\theta,\mu^1))$ with $C$ that does not depend on~$\mu^1$ and $\mu^2$.}
\end{corollary}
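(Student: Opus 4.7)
The plan is to combine the previous corollary (applied to $\theta$ against itself) with one final self-coupling step to bridge the gap between the last grid time and~$t$. Since $\theta$ is a flow, $\theta^{\pi}_{t_0,t_n}(\mu)=\theta_{s,t_n}(\mu)$ for any grid issuing from $t_0=s$, so the previous corollary (in its continuous-time footnote form), applied with $\Theta^1=\Theta^2=\theta$ on the grid $\gamma_k=1/(k+h^{-1})$, yields for every $n\in\mathbb{N}$
\begin{equation*}
W_p^p(\theta_{s,t_n}(\mu^1),\theta_{s,t_n}(\mu^2))\le A_0\bigl(1+(t_n-s)^{1_{b_*=\varepsilon}}\bigr)e^{-(b_*\wedge\varepsilon)(t_n-s)},
\end{equation*}
where $A_0$ is the corresponding constant involving $C_{b_*,\varepsilon}$ (and not yet the $\tilde C_{b_*,\varepsilon}$ of the present statement).

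Given $s<t$, choose $n=n(t)$ with $t_n\le t<t_{n+1}$; then $t-t_n<\gamma_{n+1}\le h$. The flow identity $\theta_{s,t}=\theta_{t_n,t}\circ\theta_{s,t_n}$ together with the self-coupling inequality~\eqref{AN1} applied on $[t_n,t]$ (and $1-b_*(t-t_n)\le 1$) give
\begin{equation*}
W_p^p(\theta_{s,t}(\mu^1),\theta_{s,t}(\mu^2))\le W_p^p(\theta_{s,t_n}(\mu^1),\theta_{s,t_n}(\mu^2))+C_*\,\Gamma_p\bigl(\theta_{s,t_n}(\mu^1),\theta_{s,t_n}(\mu^2)\bigr)(t-t_n)^{1+\varepsilon}.
\end{equation*}
For the first term, substitute the $t_n$-bound above and absorb the mismatch $t-t_n<h$ via $e^{-(b_*\wedge\varepsilon)(t_n-s)}\le e^{(b_*\wedge\varepsilon)h}e^{-(b_*\wedge\varepsilon)(t-s)}$; this already has the desired form. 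For the $\Gamma_p$ factor in the second term, $p$-boundedness gives $\Gamma_p\le 1+M_p(\theta,\mu^1)+M_p(\theta,\mu^2)$, and the last claim of Lemma~\ref{lem_wass} (applied with $\Theta^1=\Theta^2=\theta$) bounds $M_p(\theta,\mu^2)\le 2^pM_p(\theta,\mu^1)+2^pW_p^p(\mu^1,\mu^2)+2^{p-1}$, producing exactly the two ingredients $W_p^p(\mu^1,\mu^2)$ and $1+M_p(\theta,\mu^1)$ appearing in $A$.

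The main remaining obstacle is to check that the residual $(t-t_n)^{1+\varepsilon}$ also decays at the correct exponential rate in $(t-s)$. One might naively worry that it could be as large as the non-vanishing constant $h^{1+\varepsilon}$; the saving fact is that for the harmonic-type grid, $\gamma_{n+1}$ itself decays like $e^{-(t_n-s)}$. Concretely, $(t-t_n)^{1+\varepsilon}\le\gamma_{n+1}^{1+\varepsilon}$ is the $k=n+1$ term of $\sigma_{b_*,\varepsilon}(n+1)$, hence $\gamma_{n+1}^{1+\varepsilon}\le\sigma_{b_*,\varepsilon}(n+1)$. Applying Lemma~\ref{lem_tech}\textbf{B} at $t'=t_{n+1}$, and using $t\le t_{n+1}\le t+h$ to pay only a bounded constant factor when passing from $t_{n+1}-s$ back to $t-s$, yields
\begin{equation*}
(t-t_n)^{1+\varepsilon}\le C\,e^{-(b_*\wedge\varepsilon)(t-s)}\bigl(1+(t-s)^{1_{b_*=\varepsilon}}\bigr).
\end{equation*}
Collecting constants from the two pieces and regrouping then gives the stated bound; the replacement of $C_{b_*,\varepsilon}$ by $\tilde C_{b_*,\varepsilon}=C_{b_*,\varepsilon}+1$ in $A$ precisely accounts for the extra contribution of this final self-coupling step on $[t_n,t]$.
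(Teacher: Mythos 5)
Your strategy is sound and is essentially the paper's argument in a slightly repackaged form: the paper appends $t$ itself as a final grid point $t_{n+1}'=t$ and applies Lemma~\ref{lem_wass} once to this modified grid, bounding the modified sum $\sigma'_{b_{\ast},\varepsilon}(n+1)=e^{-b_{\ast}(t-t_n)}\sigma_{b_{\ast},\varepsilon}(n)+(t-t_n)^{1+\varepsilon}$ exactly as you bound your two pieces, whereas you stop at $t_n$ and treat the leftover interval $[t_n,t]$ by one explicit application of~\eqref{AN1}. The ingredients you invoke are all legitimate: $\theta^{\pi}_{s,t_n}=\theta_{s,t_n}$ by the flow property, $t-t_n<\gamma_{n+1}\le h$ so~\eqref{AN1} applies on the last interval, $\Gamma_p$ is controlled via $p$-boundedness together with the bound $M_p(\theta,\mu^2)\le 2^p M_p(\theta,\mu^1)+2^p W_p^p(\mu^1,\mu^2)+2^{p-1}$ established at the end of the proof of Lemma~\ref{lem_wass}, and $(t-t_n)^{1+\varepsilon}\le\gamma_{n+1}^{1+\varepsilon}\le\sigma_{b_{\ast},\varepsilon}(n+1)$ indeed decays at the right rate by Lemma~\ref{lem_tech}~B.

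The one inaccurate point is your final claim about the constants. Because you bound $1-b_{\ast}(t-t_n)$ by $1$ and then trade $e^{-(b_{\ast}\wedge\varepsilon)(t_n-s)}$ for $e^{-(b_{\ast}\wedge\varepsilon)(t-s)}$, your first piece gets inflated by $e^{(b_{\ast}\wedge\varepsilon)(t-t_n)}\le e^{(b_{\ast}\wedge\varepsilon)h}>1$, and the passage from $t_{n+1}-s$ to $t-s$ in the residual term costs another (harmless) factor on the polynomial part. So what your computation actually delivers is the stated inequality with $A$ replaced by a constant of the same structure but larger by a bounded multiplicative factor depending on $p,b_{\ast},\varepsilon,h$; it is not true that setting $\tilde{C}_{b_{\ast},\varepsilon}=C_{b_{\ast},\varepsilon}+1$ ``precisely accounts'' for the extra step. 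This is cosmetic and easily repaired: retain the contraction factor on the last interval and use $1-b_{\ast}(t-t_n)\le e^{-(b_{\ast}\wedge\varepsilon)(t-t_n)}$, so that the exponentials recombine into $e^{-(b_{\ast}\wedge\varepsilon)(t-s)}$ with no loss; doing so your two-piece estimate reduces exactly to the paper's modified-grid computation and yields the stated $A$.
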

\noindent \blue{Let us stress that this inequality is interesting only when $t-s$ is large as it gives an exponential rate of convergence, but is not sharp when $t \to s$. }

\begin{proof}
Let $s<t$ and $\gamma _{k}=\frac{1}{k+h^{-1}},t_{k}=t_{0}+\gamma
_{1}+\dots+\gamma _{k}.$ Let $n\in {\mathbb{N}}$ be such that $t_{n}\leq
t<t_{n+1}$. Then, we consider the grid $t_{k}^{\prime }=t_{k}$ for $k\in
\{0,\dots ,n\}$ and $t_{n+1}^{\prime }=t\leq t_{n+1}$. This means that $%
\gamma _{k}^{\prime }=\gamma _{k}$ for $k=1,\dots,n$ and $\gamma
_{n+1}^{\prime }<\gamma _{n+1}.$ Recall that, by~(\ref{AN6'}), $\sigma
_{b_*,\varepsilon }(n)\leq C_{b_*,\varepsilon } e^{-b_*\wedge \varepsilon
(t-s)}(1+ (t-s)^{1_{b_*=\varepsilon }})$ and by (\ref{A3}) $\gamma _{n}\leq
h e^{-(t_{n}-s).}$. Then, with $\sigma _{b_*,\varepsilon }^{\prime }(n)$ 
associated to $(\gamma _{i}^{\prime })_{i\ge 1}$, we have 
\begin{align*}
\sigma _{b_{\ast },\varepsilon }^{\prime }(n+1)
&=\sum_{i=1}^{n+1}e^{-b_{\ast }(t_{n+1}^{\prime }-t_{0})}(\gamma
_{i}^{\prime })^{1+\varepsilon } \\
&= e^{-b_{\ast }(t_{n+1}^{\prime
	}-t_{n})}\sigma _{b_{\ast },\varepsilon }(n)+  (\gamma'_{n+1})^{1+\varepsilon }
\\
&\leq (C_{b_*,\varepsilon }+1)e^{-b_*\wedge \varepsilon
	(t-s)}(1+ (t-s)^{1_{b_*=\varepsilon }}).
\end{align*}

Since $\gamma _{k}\leq h$ one can thus use Lemma~\ref{lem_wass} to get 
\begin{eqnarray*}
&&W_{p}^{p}(\theta _{s,t}(\mu ^{1}),\theta _{s,t}(\mu
^{2}))=W_{p}^{p}(\theta _{t_{0},t_{n+1}^{\prime }}(\mu ^{1}),\theta
_{t_{0},t_{n+1}^{\prime }}(\mu ^{2})) \\
&&\ \ \leq e^{-b_{\ast }(t_{n+1}^{\prime }-t_{0})}W_{p}^{p}(\mu ^{1},\mu
^{2})+
2^{p+1}C_* ( 1+ M_{p}(\theta ,\mu ^{1})+W_p^p(\mu_1,\mu_2))\sigma'_{b_{\ast },\varepsilon }(n+1).
\end{eqnarray*}
Then, the above inequalities give the claim.
\end{proof}

\begin{example} 
Let us consider the following time-dependent Ornstein-Uhlenbeck process 
\begin{equation*}
X_{s,t}=X_s +\int_s^t k(\zeta(u)-X_u)du+\sigma(W_t-W_s), \ s\le t,
\end{equation*}
with $k,\sigma>0$ and $\zeta:{\mathbb{R}}_+ \to {\mathbb{R}}_+$ being a
bounded function. Here, $X_s$ denotes a square integrable random variable that is independent of $(W_t-W_s, t\ge s)$. By standard calculations, $X_{s,t}=X_s e^{-k(t-s)}+ \int_s^t
\zeta(u) e^{-k(t-u)} du + \sigma \int_s^t e^{-k(t-u)} dW_u$. The associated
flow $\theta_{s,t}(\mu)=\mathcal{L}(X_{s,t})$ when $X_s\sim \mu \in \mathcal{%
P}_2(\R)$ satisfies for $\mu_1,\mu_2 \in \mathcal{P}_2(\R)$ $$W_2^2(\theta_{s,t}(\mu_1),\theta_{s,t}(\mu_2))\le W_2^2(%
\mu_1,\mu_2)e^{-2k(t-s)}.$$ Let $\nu_{s,t}=\mathcal{N}%
(\int_s^t \zeta(u) e^{-k(t-u)} du,\frac{\sigma^2}{2k})$ be the law of $$\int_s^t \zeta(u) e^{-k(t-u)} du+\sigma \int_s^t  e^{-k(t-u)} dW_u +\sqrt{\frac{\sigma^2}{2k}e^{-2k(t-s)}}Y$$ with $Y\sim\mathcal{N}(0,1)$ independent of the other variables. We have
$$W^2_2(\theta_{s,t}(\mu),\nu_{s,t})\le 2 \mathbb{E}[X_s^2] e^{-2 k(t-s)} +\frac{\sigma^2}{k}e^{-2k(t-s)} =_{t\to \infty}O(e^{-2k(t-s)}).$$
 However,  we can choose $\zeta$ so that $\int_s^t \zeta(u)
e^{-k(t-u)} du$ does not converge as $t\to \infty$ (take for example $\zeta(u)=\cos(u)$) and therefore $\nu_{s,t}$ (and thus $\theta_{s,t}(\mu)$) does not weakly converge. 
\end{example}

As illustrated by this simple example, time-inhomogenous flow may not have a
stationary distribution even though they are self-coupled. To go further, we
consider time homogeneous flows which means that 
\begin{equation*}
\theta _{s,t}(\mu )=\theta _{0,t-s}(\mu )\text{ for }0\leq s\leq t.
\end{equation*}%
From this, we get easily $\theta _{0,t}\circ \theta _{0,t^{\prime }}=\theta
_{0,t+t^{\prime }}$ for any $t,t^{\prime }\geq 0$ and thus 
\begin{equation}
\theta _{t,t+\delta }\circ \theta _{s,t}=\theta _{0,t-s+\delta }=\theta
_{s,t}\circ \theta _{t,t+\delta },\ \delta \geq 0,\ t\geq s\geq 0.
\label{flow_th}
\end{equation}%
In addition, we consider a flow $\theta _{s,t}$ which is continuous for $%
W_{p}$, in the sense that for every fixed $s\leq t$ 
\begin{equation}
\lim_{n\rightarrow \infty }W_{p}(\mu _{n},\mu )=0\quad \Rightarrow \quad
\lim_{n\rightarrow \infty }W_{p}(\theta _{s,t}(\mu _{n}),\theta _{s,t}(\mu
))=0.  \label{eq:++}
\end{equation}
\begin{proposition}
\label{prop:29} Consider $\theta _{s,t}:{\mathcal{P}}_{p}(B)\rightarrow {%
\mathcal{P}}_{p}(B)$ a continuous time-homogeneous flow, which is $p$-bounded and is $(p,b_{\ast },\varepsilon )$ self-coupled for some $h\in
(0,1)$, $b_{\ast },\varepsilon >0.$ Then, there exists a unique probability
measure $\nu \in \mathcal{P}_{p}(B)$ such that for every $s\leq t$ one has $%
\nu =\theta _{s,t}(\nu ).$ It satisfies $\Vert \nu \Vert
_{p}^{p}=M_{p}(\theta ,\nu )$ and \blue{there exists $C\in \mathbb{R}_+$ such that for every $s\leq t$ and every $\mu \in \mathcal{P}_{p}(B)$, 
\begin{equation}
W_{p}^{p}(\theta _{s,t}(\mu ),\nu )\leq C(1+M_p(\theta,\mu)) e^{-\varepsilon \wedge
b_{\ast }(t-s)}(1+(t-s)^{\mathbf{1}_{b_{\ast }=\varepsilon }}).
\label{AN14}
\end{equation}%
}
\end{proposition}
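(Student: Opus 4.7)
The key input is Corollary~\ref{cor:2.8}, which supplies a contraction estimate between $\theta_{s,t}(\mu^{1})$ and $\theta_{s,t}(\mu^{2})$ that decays exponentially in $t-s$. The strategy is: first build the invariant measure as a Cauchy limit, then use the corollary a second time with $\mu^{2}=\nu$ to conclude~\eqref{AN14}.

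My plan for existence is as follows. Fix any $\mu \in \mathcal{P}_{p}(B)$. For $0\le t\le t'$, the flow identity combined with time-homogeneity gives $\theta_{0,t'}(\mu)=\theta_{0,t}\circ\theta_{0,t'-t}(\mu)$, so I can write
\begin{equation*}
W_{p}^{p}\bigl(\theta_{0,t}(\mu),\theta_{0,t'}(\mu)\bigr)=W_{p}^{p}\bigl(\theta_{0,t}(\mu),\theta_{0,t}(\theta_{0,t'-t}(\mu))\bigr).
\end{equation*}
By $p$-boundedness, $\|\theta_{0,t'-t}(\mu)\|_{p}^{p}\le M_{p}(\theta,\mu)$, so $W_{p}^{p}(\mu,\theta_{0,t'-t}(\mu))\le 2^{p-1}(\|\mu\|_{p}^{p}+M_{p}(\theta,\mu))$ uniformly in $t'-t$. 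Applying Corollary~\ref{cor:2.8} with $\mu^{1}=\mu$ and $\mu^{2}=\theta_{0,t'-t}(\mu)$, the prefactor $A$ stays bounded by a constant depending only on $\mu$, and the right-hand side tends to $0$ as $t\to\infty$ uniformly in $t'\ge t$. Thus $(\theta_{0,t}(\mu))_{t\ge 0}$ is Cauchy in the complete space $(\mathcal{P}_{p}(B),W_{p})$ (see \cite{Villani}), and converges to some $\nu\in\mathcal{P}_{p}(B)$. Moreover, for any other $\mu'\in\mathcal{P}_{p}(B)$, Corollary~\ref{cor:2.8} applied directly to $(\mu,\mu')$ gives $W_{p}(\theta_{0,t}(\mu),\theta_{0,t}(\mu'))\to 0$, so $\nu$ is independent of the choice of $\mu$.

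Next I establish invariance, uniqueness and the norm identity. Using the continuity~\eqref{eq:++} of $\theta_{s,t}$ together with time-homogeneity,
\begin{equation*}
\theta_{s,t}(\nu)=\theta_{0,t-s}\bigl(\lim_{n\to\infty}\theta_{0,n}(\mu)\bigr)=\lim_{n\to\infty}\theta_{0,n+t-s}(\mu)=\nu,
\end{equation*}
which proves $\theta_{s,t}(\nu)=\nu$ for every $0\le s\le t$. If $\nu'$ is another invariant measure, then $\nu'=\theta_{0,t}(\nu')$ for every $t\ge 0$, and letting $t\to\infty$ yields $\nu'=\nu$ by the independence-of-initial-condition step. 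Finally, invariance implies $\|\theta_{0,t}(\nu)\|_{p}^{p}=\|\nu\|_{p}^{p}$ for every $t\ge 0$, so $M_{p}(\theta,\nu)=\|\nu\|_{p}^{p}$.

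The exponential estimate~\eqref{AN14} is then obtained by applying Corollary~\ref{cor:2.8} with $\mu^{1}=\mu$ and $\mu^{2}=\nu$, using $\theta_{s,t}(\nu)=\nu$ on the left-hand side; the constant $A$ takes the stated form since the corollary's prefactor depends on $W_{p}^{p}(\mu,\nu)$ and $M_{p}(\theta,\mu)$, both of which are the quantities entering~\eqref{AN14}. The only subtle point is making sure the Cauchy argument is phrased in the continuous parameter $t$ rather than along an arbitrary integer subsequence, which is needed so that the identity $\theta_{s,t}(\nu)=\nu$ holds for every real $t\ge s$ (not only along a lattice). This is handled by the uniform bound on $A$ in Corollary~\ref{cor:2.8}, which makes the Cauchy condition hold along all sequences $t_{n}\to\infty$ simultaneously.
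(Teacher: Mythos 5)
Your proof is correct and follows essentially the same route as the paper: both construct $\nu$ as a $W_p$-Cauchy limit of $\theta_{0,t}(\mu)$ by applying Corollary~\ref{cor:2.8} to $\mu$ and a time-shifted copy of $\mu$ (with the prefactor kept uniform via $p$-boundedness), then obtain invariance from the continuity property~\eqref{eq:++} together with time-homogeneity, and uniqueness from the contraction. The only cosmetic differences are that the paper derives~\eqref{AN14} by passing to the limit in its Cauchy estimate~\eqref{AN15} while you re-apply Corollary~\ref{cor:2.8} to the pair $(\mu,\nu)$ after establishing invariance, and your formulation at $s=0$ with continuous parameter lets you skip the paper's separate check that the limit does not depend on the starting time.
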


\begin{proof}
Let $s\leq t,\delta >0$ and $\mu \in \mathcal{P}_{p}(B)$. We have 
\begin{equation*}
W_{p}(\mu ,\theta _{t,t+\delta }(\mu ))\leq \left\Vert \mu \right\Vert
_{p}+\left\Vert \theta _{t,t+\delta }(\mu )\right\Vert _{p}\leq
2M_{p}(\theta ,\mu )
\end{equation*}%
so that, using the time homogeneous flow property~\eqref{flow_th} and
Corollary~\ref{cor:2.8} with $\mu ^{1}=\mu $ and $\mu ^{2}=\theta
_{t,t+\delta }(\mu )$ 
\begin{align}
W_{p}^{p}(\theta _{s,t}(\mu ),\theta _{s,t+\delta }(\mu
))&=W_{p}^{p}(\theta _{s,t}(\mu ),\theta _{s,t}(\theta _{t,t+\delta }(\mu
))) \notag \\
&\leq A e^{-\varepsilon \wedge b_{\ast }(t-s)}(1+(t-s))^{\mathbf{1}_{b_{\ast }=\varepsilon }},\label{AN15}
\end{align}%
\blue{with $A\le C(1 +M_p(\theta,\mu))$ and $C\in \R_+$ does not depend on~$\mu$ (note that $\|\mu\|_p^p\le M_p(\theta,\mu)$ since $\theta$ is a continuous flow.).}
Therefore, if $t_{n}\rightarrow \infty $ , we get that $\theta
_{s,t_{n}}(\mu )$, $n\in {\mathbb{N}}$ is a Cauchy sequence. Since $\mathcal{%
P}_{p}(B)$ is complete with respect to $W_{p}$, we define its limit in the
Wasserstein metric as $\nu :=\lim_{n\rightarrow \infty }\theta
_{s,t_{n}}(\mu )\in \mathcal{P}_{p}(B)$. Coming back to \eqref{AN15} and
passing to the limit we get 
\begin{equation*}
W_{p}^{p}(\theta _{s,t}(\mu ),\nu )\leq A e^{-\varepsilon \wedge
b_{\ast }(t-s)}(1+(t-s) )^{\mathbf{1}_{b_{\ast }=\varepsilon }},
\end{equation*}%
This proves \eqref{AN14}.

We now check that $\nu =\lim_{n\rightarrow \infty }\theta _{s^{\prime
},t_{n}}(\mu )$ for any $s^{\prime }\geq 0$. For $s^{\prime }\leq s$, we
write for $n$ sufficiently large 
\begin{equation*}
W_{p}^{p}(\theta _{s^{\prime },t_{n}}(\mu ),\theta _{s,t_{n}}(\mu
))=W_{p}^{p}(\theta _{s,t_{n}}(\theta _{s^{\prime },s}(\mu )),\theta
_{s,t_{n}}(\mu ))\rightarrow 0,
\end{equation*}%
by using Corollary~\ref{cor:2.8}. We proceed similarly for $s^{\prime }>s$.

This property gives easily that $\nu$ is invariant: we get indeed from the
continuity property \eqref{eq:++} that 
\begin{equation*}
\theta_{s,t}(\nu)=\lim_{n\to \infty}
\theta_{s,t}(\theta_{t,t_n}(\mu))=\lim_{n\to \infty} \theta_{s,t_n}(\mu)=\nu.
\end{equation*}

Let us prove uniqueness. Let $\nu^{\prime }\in\mathcal{P}_p(B)$ be another
invariant measure. Then, using Corollary~\ref{cor:2.8}, $W^p_{p}(\nu
,\nu ^{\prime })=W_{p}^{p}(\theta_{s,t}(\nu), \theta_{s,t}(\nu ^{\prime
}))\rightarrow 0$ as $t\rightarrow \infty$.
\end{proof}

\bigskip

We discuss now the rate of convergence to the invariant measure using an
iterative scheme algorithm.

\begin{theorem}
\label{FINAL} Let $\theta $ be a time homogeneous continuous flow on ${%
\mathcal{P}}_{p}(B)$ which is $p$-bounded and is $(p,b_{\ast
},\varepsilon )$ self-coupled for some $h\in (0,1)$, $b_{\ast },\varepsilon >0$ and 
$\nu $ the invariant measure obtained in Proposition~\ref{prop:29}. Let $%
\Theta _{s,t}:{\mathcal{P}}_{p}(B)\rightarrow {\mathcal{P}}_{p}(B)$, $s\leq
t $, be a family of applications which is $(p,b_{\ast },\varepsilon )$
coupled with $\theta _{s,t}$. Let $h\in(0,1)$ be such that both  \eqref{AN14bis} and the coupling property~\eqref{AN1} between $\theta$ and $\Theta$  hold. \blue{ Let $a\ge 0$, $\gamma _{n}=\frac{1}{a+n}$ and $%
t_{n}=s+\gamma _{1}+...+\gamma _{n}$, $n\in {\mathbb{N}}$. Then, for
every $\mu \in \mathcal{P}_{p}(B)$, there exists $A\in \R_+$ such that
\begin{equation}
W_{p}^{p}(\Theta _{s,t_{n}}^{\pi }(\mu ),\nu )\leq A n^{-b_{\ast }\wedge
\varepsilon }(1+\ln (n+1)^{\mathbf{1}_{b_{\ast }=\varepsilon }}  ) \label{A5}
\end{equation}%
where $A$ does not depend on~$\mu$.}
\end{theorem}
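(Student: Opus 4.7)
\medskip

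\noindent\textbf{Plan.} The idea is to interpose the exact flow applied to the same initial law between $\Theta^\pi_{s,t_n}(\mu)$ and~$\nu$ and then bound the two resulting pieces separately. Since $\theta$ is a flow, $\theta^\pi_{s,t_n}(\mu) = \theta_{s,t_n}(\mu)$, and the triangle inequality in $W_p$, together with $(a+b)^p \le 2^{p-1}(a^p+b^p)$, yields
\[
W_p^p(\Theta^\pi_{s,t_n}(\mu),\nu) \le 2^{p-1}\Bigl( W_p^p(\Theta^\pi_{s,t_n}(\mu), \theta_{s,t_n}(\mu)) + W_p^p(\theta_{s,t_n}(\mu),\nu) \Bigr).
\]
The first term measures the discretisation error; the second measures the convergence of the continuous flow to its invariant measure. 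Both will turn out to decay at the same rate $n^{-(b_*\wedge \varepsilon)}$.

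\medskip

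For the \emph{discretisation error}, the plan is to apply the Corollary following Lemma~\ref{lem_wass} with $\Theta^1 = \theta$, $\Theta^2 = \Theta$ and the common initial law $\mu^1 = \mu^2 = \mu$. Since $W_p^p(\mu,\mu)=0$ and $M_p(\theta,\mu,h) = M_p(\theta,\mu)$ for the flow, and since \eqref{AN14bis} is in force, one obtains
\[
W_p^p(\Theta^\pi_{s,t_n}(\mu), \theta_{s,t_n}(\mu)) \le 2^{p+1} C_*\bigl(1+M_p(\theta,\mu)\bigr)\sigma_{b_*,\varepsilon}(n),
\]
and Lemma~\ref{lem_tech}~\textbf{B} bounds $\sigma_{b_*,\varepsilon}(n)$ by $C_{b_*,\varepsilon} n^{-b_*\wedge \varepsilon}$ (with an additional $\ln(1+n)$ factor in the critical case $b_*=\varepsilon$).

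\medskip

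For the \emph{flow error}, Proposition~\ref{prop:29} gives directly
\[
W_p^p(\theta_{s,t_n}(\mu),\nu) \le A' e^{-(b_*\wedge \varepsilon)(t_n-s)}\bigl(1+(t_n-s)^{\mathbf{1}_{b_*=\varepsilon}}\bigr).
\]
Since $\gamma_k = 1/(k+h^{-1})$, one has $t_n - s = \sum_{k=1}^n \gamma_k \ge \ln((n+h^{-1})/(1+h^{-1}))$, hence $e^{-(b_*\wedge\varepsilon)(t_n-s)} \le C n^{-(b_*\wedge\varepsilon)}$, and the polynomial prefactor $1+(t_n-s)^{\mathbf{1}_{b_*=\varepsilon}}$ only contributes the same $\ln(1+n)$ factor in the critical case as appeared above. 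So the flow error obeys the same bound as the discretisation error.

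\medskip

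Adding the two pieces and factoring out the common constant $2^{2p}C_*(1+M_p(\theta,\mu))(2 C_{b_*,\varepsilon}+1)$ yields exactly \eqref{A5}. The main work is essentially bookkeeping: verifying that the constants $C_*$, $C_{b_*,\varepsilon}$, and $M_p(\theta,\mu)$ that arise from Lemma~\ref{lem_wass} for the discretisation piece and from Corollary~\ref{cor:2.8}/Proposition~\ref{prop:29} for the flow piece combine into the stated $A$. The only substantive observation is that the logarithmic growth of $t_n - s$ under $\gamma_k = 1/(k+h^{-1})$ ensures the exponential-in-time estimate for the flow and the $\sigma_{b_*,\varepsilon}(n)$-type estimate for the discretisation error give the \emph{same} polynomial rate, so that neither piece dominates the other.
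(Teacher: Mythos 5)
Your proposal is correct and follows essentially the same route as the paper's proof: split via the triangle inequality and $(a+b)^p\le 2^{p-1}(a^p+b^p)$ into the discretisation error, bounded by \eqref{AN16} with $\mu^1=\mu^2=\mu$, and the flow error, bounded by \eqref{AN14} together with the logarithmic growth of $t_n-s$, and the constants combine into the stated $A$ exactly as you indicate.
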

\begin{proof}
	We suppose that $b_*\neq \varepsilon$: if not, we multiply the upper bound
	with $1+\ln(1+n)$. \blue{By \eqref{AN16}, we have
	\begin{equation}\label{maj_inter}
	W_{p}^{p}(\Theta _{s,t_{n}}^{\pi }(\mu ),\theta _{s,t_{n}}(\mu ))\leq
	C(1+ M_{p}(\theta,\mu) +\| \Theta^\pi_{s,t_{n(h)}}(\mu)\|_p^p) n^{-b_{\ast }\wedge \varepsilon },
	\end{equation}%
    noting that $M_{p}(\theta,\theta_{s,t}(\mu)))\le M_{p}(\theta,\mu)$ since $\theta$ is a time homogeneous flow.   
	By \eqref{AN14}, we also have  
	\begin{equation*}
	W_{p}^{p}(\theta _{s,t_{n}}(\mu ),\nu )\leq C(1 +M_p(\theta,\mu)) e^{-\varepsilon \wedge
	b_{\ast }(t_{n}-s)}\leq C(1 +M_p(\theta,\mu)) n^{-\varepsilon \wedge b_{\ast }}.
	\end{equation*}%
}	 Using the triangular inequality and $(a+b)^p\le 2^{p-1}(a^p+b^p)$, we get the claim. 
\end{proof}
\blue{\begin{remark}\label{rk_cvunifmu}
If we assume in addition that $\theta$ and $\Theta$ satisfy the p-Foster Lyapunov criterion~\eqref{AN2}, we see from the proof above that the constant $A$ in~\eqref{A5} satisfies $A\le C (1+ \|\mu\|_p^p)$, with $C$ that does not depend on $\mu$, so that 
\begin{equation}
\forall \mu \in \mathcal{P}_p(\R^d), \ W_{p}^{p}(\Theta _{s,t_{n}}^{\pi }(\mu ),\nu )\leq  C (1+ \|\mu\|_p^p)  n^{-b_{\ast }\wedge
\varepsilon }(1+\ln (n+1)^{\mathbf{1}_{b_{\ast }=\varepsilon }}  ) \label{A5_unif}
\end{equation}%
Alternatively, if (only) $\theta$ satisfies the $p$-Foster Lyapunov criterion~\eqref{AN2} and the parameter $a$ in Theorem~\ref{th:app} is such that $a\ge h^{-1}$, then $n(h)=0$ and $\Theta _{s,t_{n(h)}}^{\pi }(\mu )=\mu$ in the inequality~\eqref{maj_inter}, so that~\eqref{A5_unif} still holds.
\end{remark}
}

\begin{remark}
	\label{re:long}
Now that we have stated in Theorem~\ref{FINAL} the main result related to our abstract framework, it is interesting to compare with the recent paper of Schuh and Souttar~\cite{schuh2} that came out while working on this paper. As~\cite{schuh2}, our goal is to give a general framework to study the approximation error of a stationary measure. However, there are significant differences between our works, and we list here the main ones.  
\begin{enumerate}
	\item The framework in~\cite{schuh2} considers  approximations defined for all $t\ge 0$ and parametrized by a parameter~$\delta>0$. When applied to Euler type approximation schemes (see \cite[Subsection 3.2]{schuh2}), this parameter represents the constant time step. In contrast, our framework is meant for approximation schemes and a decreasing sequence of time steps. 
\item   \cite[General assumptions 1 (1)]{schuh2} asks for contractivity, which implies the  exponential convergence to the invariant law of one process. In our setting, the self-coupling property~\eqref{AN1} on $\theta$ implies this property in Proposition~\ref{prop:29}. Note that  \cite[General assumptions 1 (1)]{schuh2} implies condition~\eqref{AN1}, but the reciprocal is not true: $W_p ^p(\mu^1,\mu^2)$ is trivially upper bounded by $\Gamma_p(\mu^1,\mu^2)$, but we cannot upper bound $\Gamma_p(\mu^1,\mu^2)$ by $CW_p ^p(\mu^1,\mu^2)$. Condition~\eqref{AN1} allows the error term in $ (t-s)^{1+\varepsilon} $ which is convenient in order to deal with 
		Euler type schemes, see  in particular the example concerning the Boltzmann type equation in Subsection~\ref{sec:bol} below.

 \item \cite[General assumptions 1 (2)]{schuh2} can seen as a consequence of the coupling assumption~\eqref{AN1} between $\theta$ and $\Theta$, provided that $\theta$ and $\Theta$ are $p$-bounded. Indeed, if we take $\delta<h$ and set $a_k=W_p^p(\theta^{\pi}_{s,s+k\delta}(\mu),\Theta^{\pi}_{s,s+k\delta}(\mu))$ with $\pi= \{s+n\delta, n \in \N \}$ and $\mu \in \mathcal{P}_p(B)$, we get $a_0=0$ and $a_{k+1}\le e^{-b_* \delta}a_k + C \delta^{1+\varepsilon}$. This gives $a_k\le  C \frac{\delta^{1+\varepsilon}}{1-e^{-b_* \delta}}$ for all $k\in \N$, and thus  $W_p(\theta^{\pi}_{s,s+k\delta}(\mu),\Theta^{\pi}_{s,s+k\delta}(\mu))\le  (C/b_*)^{1/p} \delta^{\varepsilon/p} $
  when $\delta$ is smaller than $1/b_*$. Note also that we directly get an estimate for all $k\ge 0$ as in~\cite[Theorem 1.1]{schuh2}. \\ 
 Let us stress also that \cite[General assumptions 1 (2)]{schuh2} involves the multi step approximation scheme. Instead, our error hypothesis~\eqref{AN1} only concerns the one step approximation scheme, which is crucial then in order to deal with decreasing time steps. In practice, it is often easier to check one step estimates : though \cite[General assumptions 1 (2)]{schuh2} is weaker, one would typically use one step estimates and adapt the arguments above to prove it.
\end{enumerate}  
Thus, modulo $p$-boundedness assumptions, \cite[General assumptions 1 (1)]{schuh2} is a bit stronger than the self-coupling property on~$\theta$ while \cite[General assumptions 1 (2)]{schuh2} is a bit weaker than our coupling assumption between $\theta$ and $\Theta$. Our assumptions are thus different from those of~\cite{schuh2}. They allow us to consider a decreasing sequence of time steps and then to get a rate of convergence, which does not seem possible with~\cite[General assumptions 1]{schuh2}. Note that considering a decreasing sequence of time steps is somehow related to the question raised in~\cite[Remark 2.5]{schuh2} on the distribution limit when both $t\to \infty$ and $\delta \to 0$ and the commutativity of the limits.
	\end{remark}

\subsection{Probabilistic representation}

\label{subsec_Probrep}

Now, we give a probabilistic framework which allows to prove that $%
\Theta^{1} $ and $\Theta ^{2}$ are $(p,b_{\ast },\varepsilon )-$ coupled
(and which will be used in the sequel in our examples).

\begin{definition}
	\label{def:214}
We say that we have a ``\textbf{joint probabilistic
representation}" for $(\Theta ^{1},\Theta ^{2})$ if the following holds.
There exists some $h\in (0,1)$ such that for every $s<t$ with $t-s\leq h$ and $\mu
^{1},\mu ^{2}\in \mathcal{P}_{p}(B)$ one may find a probability space $%
(\Omega ,\mathcal{F},{\mathbb{P}})$ (which may depend on $s<t$ and $\mu
^{1},\mu ^{2})$ and some $B$-valued $p$-integrable random variables $%
X^{1},X^{2}$ and $\mathcal{X}_{s,t}^{1},\mathcal{X}_{s,t}^{2}$ such that 
\begin{equation*}
X^{i}\sim \mu ^{i}\quad \text{and}\quad \mathcal{X}_{s,t}^{i}\sim \Theta
_{s,t}^{i}(\mu ^{i})\quad i=1,2
\end{equation*}%
and the following two conditions are satisfied for some positive constants $%
\varepsilon $, $b_{\ast }$, $C_{\ast }$, 
and any $0<s\leq t$ with $t-s\leq h$
\begin{align}
 W_{p}^{p}(\mu ^{1},\mu ^{2})=&\mathbb{E}[d^{p}(X^{1},X^{2})],  \label{AN5}
\\
 \mathbb{E}[d^{p}(\mathcal{X}_{s,t}^{1},\mathcal{X}_{s,t}^{2})]\leq&
(1-b_{\ast }(t-s))\mathbb{E}[d^{p}(X^{1},X^{2})]+C_{\ast }\Gamma _{ p}(\mu
^{1},\mu ^{2})(t-s)^{1+\varepsilon }. \label{AN3}
\end{align}%
When $\Theta ^{1}=\Theta ^{2}$, we say that we have a self joint
probabilistic representation for $\Theta ^{1}$, which echoes to Definition~%
\ref{coupling} where coupling and self coupling are introduced.
\end{definition}
\noindent  We refer to Villani \cite[Theorem 4.1]{Villani} for the existence of an optimal coupling, so that condition~\eqref{AN5} can always be achieved on an atomless probability space.
\begin{proposition}
\label{stochastic}Suppose that we have a joint probabilistic representation
for $\Theta ^{1}$ and $\Theta ^{2}$. Then, $\Theta ^{1}$ and $\Theta ^{2}$
are $(p,b_{\ast },\varepsilon )-$coupled . 
\end{proposition}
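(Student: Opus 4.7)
The plan is to observe that the proof is essentially a direct transcription: the joint probabilistic representation supplies an explicit coupling of the output measures, and the Wasserstein distance is by definition an infimum over all such couplings. So the inequality obtained at the level of expectations immediately transfers to an inequality between Wasserstein distances.

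Concretely, I would proceed in three short steps. First, fix $s\leq t$ with $t-s\leq h$ and $\mu^1,\mu^2\in \mathcal{P}_p(B)$, and invoke Definition~\ref{def:214} to obtain the probability space carrying $X^1,X^2,\mathcal{X}_{s,t}^1,\mathcal{X}_{s,t}^2$ with the stated marginals. Second, note that the joint law of $(\mathcal{X}_{s,t}^1,\mathcal{X}_{s,t}^2)$ lies in $\Pi(\Theta_{s,t}^1(\mu^1),\Theta_{s,t}^2(\mu^2))$ because the marginals are $\Theta^i_{s,t}(\mu^i)$; consequently the definition of the Wasserstein distance gives
\begin{equation*}
W_p^p(\Theta_{s,t}^1(\mu^1),\Theta_{s,t}^2(\mu^2))\leq \mathbb{E}\bigl(d^p(\mathcal{X}_{s,t}^1,\mathcal{X}_{s,t}^2)\bigr).
\end{equation*}
Third, apply \eqref{AN3} to bound the right-hand side and then substitute \eqref{AN5} to replace $\mathbb{E}(d^p(X^1,X^2))$ by $W_p^p(\mu^1,\mu^2)$. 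The resulting inequality is exactly \eqref{AN1} with the same constants $b_*$, $\varepsilon$, $C_*$ and $h$ supplied by Definition~\ref{def:214}, proving that $\Theta^1$ and $\Theta^2$ are $(p,b_*,\varepsilon)$-coupled.

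There is no real obstacle here: the content of the proposition is that Definition~\ref{def:214} is a sufficient (probabilistic) condition for Definition~\ref{coupling}, and the only nontrivial ingredient is the elementary fact that any coupling of two probability measures realizes an upper bound for the Wasserstein distance. In particular, one does not even need to know that \eqref{AN5} is an optimal coupling for the \emph{upper} direction; the equality $W_p^p(\mu^1,\mu^2)=\mathbb{E}(d^p(X^1,X^2))$ is only used to rewrite the first term of the bound in~\eqref{AN3} in Wasserstein form. If one wanted to weaken the hypothesis, \eqref{AN5} could be replaced by the inequality $\mathbb{E}(d^p(X^1,X^2))\leq W_p^p(\mu^1,\mu^2)$ (so that $(X^1,X^2)$ is also an optimal or near-optimal coupling of $(\mu^1,\mu^2)$) without altering the argument.
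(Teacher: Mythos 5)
Your proposal is correct and follows essentially the same route as the paper: bound $W_p^p(\Theta^1_{s,t}(\mu^1),\Theta^2_{s,t}(\mu^2))$ by $\mathbb{E}(d^p(\mathcal{X}^1_{s,t},\mathcal{X}^2_{s,t}))$ since the pair is a coupling of the two image laws, then apply \eqref{AN3} and \eqref{AN5}. The closing remark on weakening \eqref{AN5} is accurate but not needed, since the reverse inequality holds automatically for any coupling.
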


\begin{proof}
We need to check the conditions in Definition \ref{coupling}. Using (\ref%
{AN3}) first and (\ref{AN5}) we have 
\begin{align*}
W_{p}^{p}(\Theta _{s,t}^{1}(\mu ^{1}),\Theta _{s,t}^{2}(\mu ^{2})) &\leq 
\mathbb{E}[d^{p}(\mathcal{X}_{s,t}^{1},\mathcal{X}_{s,t}^{2})] \\
&\leq (1-b_{\ast }(t-s))\mathbb{E}[d^{p}(X^{1},X^{2})]+C_{\ast }\Gamma
_{p}(\mu ^{1},\mu ^{2})(t-s)^{1+\varepsilon } \\
&=(1-b_{\ast }(t-s))W_{p}^{p}(\mu ^{1},\mu ^{2})+C_{\ast }\Gamma
_{p}(\mu ^{1},\mu ^{2})(t-s)^{1+\varepsilon }
\end{align*}
so (\ref{AN1}) is proved. 
\end{proof}

Now, we assume that $\Theta $ has a probabilistic representation as in Definition \ref{def:214}: \ for every $%
s<t $ and every $\mu\in\mathcal{P}_p(B) ,$ one may find a probability space $(\Omega ,\mathcal{F}%
,{\mathbb{P}})$ (which may depend on $s<t$ and $\mu )$ and some~$B$ valued
square integrable random variables $X$ and $\mathcal{X}_{s,t},$ such that $%
X\sim \mu $ and $\mathcal{X}_{s,t}\sim \Theta _{s,t}(\mu ).$ In particular 
\begin{equation*}
	\left\Vert \mathcal{X}_{s,t}\right\Vert _{p}=\left\Vert \Theta %
	_{s,t}(\mu )\right\Vert _{p}\quad \left\Vert X\right\Vert
	_{p}=\left\Vert \mu \right\Vert _{p}
\end{equation*}

\begin{lemma}
	\label{lem:2.19}
	Let $h>0$ and suppose that $\Theta $ has a probabilistic representation satisfying%
	\begin{equation*}
		\left\Vert \mathcal{X}_{s,t}\right\Vert _{p}^{p}\leq (1-b(t-s))\left\Vert
		X\right\Vert _{p}^{p}+C(t-s)
	\end{equation*}%
	for some $b$ and $C$ and any $ t-s\in [0,h] $. Then $\Theta $ verifies the $p$-Foster-Lyapunov
	condition \eqref{AN2} and consequently is $p-$bounded, and by \eqref{AN7bis}%
	\begin{equation}
		M_{p}(\Theta ,\mu ,h)\leq \left\Vert \mu \right\Vert _{p}^{p}+C\frac{e^{b}}{b}.
	\end{equation}
\end{lemma}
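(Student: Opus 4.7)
The plan is essentially immediate: unpack the probabilistic representation to recognize the hypothesis as the one-step $p$-Foster-Lyapunov condition~\eqref{AN2}, then invoke the already-stated bound~\eqref{AN7bis}. Indeed, since $X\sim \mu$ and $\mathcal{X}_{s,t}\sim \Theta_{s,t}(\mu)$, definition~\eqref{def_pseudonorm} gives
\begin{equation*}
\mathbb{E}[d(X,x_0)^p]=\|\mu\|_p^p \quad \text{and}\quad \mathbb{E}[d(\mathcal{X}_{s,t},x_0)^p]=\|\Theta_{s,t}(\mu)\|_p^p,
\end{equation*}
so the stochastic hypothesis translates without loss into the deterministic inequality
\begin{equation*}
\|\Theta_{s,t}(\mu)\|_p^p \le (1-b(t-s))\|\mu\|_p^p+C(t-s),\quad 0\le t-s\le h,
\end{equation*}
which is precisely the condition~\eqref{AN2}. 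Applying~\eqref{AN7bis} then yields $M_p(\Theta,\mu)\le \|\mu\|_p^p+Ce^b/b$.

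If I had to reprove \eqref{AN7bis} on the spot, I would iterate along an arbitrary admissible partition $\pi$ with steps $\gamma_k\le h$. Writing $\mu_k=\Theta^{\pi}_{t_0,t_k}(\mu)$ and applying the one-step estimate to $\mu_{k-1}$ gives $\|\mu_k\|_p^p\le (1-b\gamma_k)\|\mu_{k-1}\|_p^p+C\gamma_k\le e^{-b\gamma_k}\|\mu_{k-1}\|_p^p+C\gamma_k$, hence by induction
\begin{equation*}
\|\mu_n\|_p^p\le e^{-b(t_n-t_0)}\|\mu\|_p^p+C\sum_{k=1}^n e^{-b(t_n-t_k)}\gamma_k.
\end{equation*}
A standard Riemann comparison, together with $\gamma_k\le h\le 1$, bounds the sum by $e^{bh}\int_{t_0}^{t_n}e^{-b(t_n-u)}\,du\le e^b/b$. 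Taking the supremum over $n$ and over all partitions with mesh at most $h$ produces the stated bound, and in particular shows $p$-boundedness of~$\Theta$.

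There is no real obstacle here: the joint probabilistic representation is designed exactly so that the contractive moment estimate on the random variables collapses to a deterministic FL-type inequality on $\|\Theta_{s,t}(\mu)\|_p^p$, after which the conclusion is an elementary geometric iteration that has already been carried out in Subsection~\ref{sec:51}.
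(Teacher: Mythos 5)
Your proposal is correct and matches the paper's (implicit) argument exactly: the identities $\|X\|_p=\|\mu\|_p$ and $\|\mathcal{X}_{s,t}\|_p=\|\Theta_{s,t}(\mu)\|_p$ turn the hypothesis into the Foster--Lyapunov condition~\eqref{AN2}, after which the bound is just~\eqref{AN7bis} from Lemma~\ref{lem_bound}. Your sketched re-derivation of~\eqref{AN7bis} by iterating the one-step estimate and comparing the sum with $\int e^{-b(t_n-u)}\,du$ is also the same computation as in the paper's proof of that lemma.
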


\section{Some useful tools}\label{sec_tools}

\subsection{Using the sewing lemma to prove coupling}
\label{sec:sew}

In this section, we explain how the sewing lemma can be used to get
automatically some assumptions that are required in Theorem~\ref{FINAL} when working with the $W_1$~distance. We
start by the following motivating lemma that requires to work with $%
p=1$, as explained after the proof. 

\begin{lemma}
\label{lem_coupling} Let $\Theta^i_{s,t}:{\mathcal{P}}_{1}(B)\rightarrow {%
\mathcal{P }}_{1}(B)$, $s\leq t$, $i=1,2$ be two families of applications
such that:

\begin{itemize}
\item $\Theta ^{1}$ is $1$-bounded and $(1,b_{\ast },\varepsilon )$
self-coupled,

\item There exists $C\in {\mathbb{R}}_+$ and $ h>0 $ such that for all $0 \leq t-s \leq h$ and $ \mu \in 
\mathcal{P}_1(B)$, $W_{1}(\Theta^1_{s,t}(\mu ), \Theta^2_{s,t}(\mu ))\le C
(1+\|\mu\|_{1} ) (t-s)^{1+\varepsilon} $.
\end{itemize}

Then, $\Theta ^{1}$ and $\Theta ^{2}$ are $(1,b_{\ast },\varepsilon )$%
-coupled. Furthermore, $\Theta ^{2}$ is $1$-bounded and $(1,b_{\ast
},\varepsilon )$ self-coupled as well.
\end{lemma}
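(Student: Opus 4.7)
The plan is to exploit the triangle inequality for $W_1$ together with the two hypotheses. For any $\mu^1,\mu^2 \in \mathcal{P}_1(B)$ and $0 \le t-s \le h$, I would write
\begin{equation*}
W_1(\Theta^1_{s,t}(\mu^1), \Theta^2_{s,t}(\mu^2)) \le W_1(\Theta^1_{s,t}(\mu^1), \Theta^1_{s,t}(\mu^2)) + W_1(\Theta^1_{s,t}(\mu^2), \Theta^2_{s,t}(\mu^2)).
\end{equation*}
The first term is controlled by the self-coupling of $\Theta^1$ (the $(1,b_*,\varepsilon)$ coupling condition \eqref{AN1} read with $p=1$), yielding $(1-b_*(t-s))W_1(\mu^1,\mu^2) + C_*\Gamma_1(\mu^1,\mu^2)(t-s)^{1+\varepsilon}$. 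The second term is handled by the second hypothesis with $\mu=\mu^2$, producing $C(1+\|\mu^2\|_1)(t-s)^{1+\varepsilon} \le C\,\Gamma_1(\mu^1,\mu^2)(t-s)^{1+\varepsilon}$. Summing gives the $(1,b_*,\varepsilon)$ coupling of $\Theta^1$ and $\Theta^2$ with constant $C_*+C$.

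For the ``furthermore'' statement, the $1$-boundedness of $\Theta^2$ is an immediate consequence of Lemma~\ref{lem_wass} applied to the pair $(\Theta^1,\Theta^2)$, which we have just shown to be $(1,b_*,\varepsilon)$-coupled with $\Theta^1$ being $1$-bounded. To obtain self-coupling of $\Theta^2$, I would insert $\Theta^1$ twice via the triangle inequality:
\begin{equation*}
W_1(\Theta^2_{s,t}(\mu^1), \Theta^2_{s,t}(\mu^2)) \le W_1(\Theta^2_{s,t}(\mu^1),\Theta^1_{s,t}(\mu^1)) + W_1(\Theta^1_{s,t}(\mu^1),\Theta^1_{s,t}(\mu^2)) + W_1(\Theta^1_{s,t}(\mu^2),\Theta^2_{s,t}(\mu^2)),
\end{equation*}
bound the two outer terms using the second hypothesis, and the middle one using the self-coupling of $\Theta^1$. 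Again the error terms aggregate into a multiple of $\Gamma_1(\mu^1,\mu^2)(t-s)^{1+\varepsilon}$ while the contractive factor $(1-b_*(t-s))W_1(\mu^1,\mu^2)$ is preserved.

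There is no real technical obstacle here; the only conceptual point worth emphasizing, which the paragraph after the lemma presumably addresses, is that the argument is intrinsically restricted to $p=1$. For $p>1$ the triangle inequality would be applied at the level of $W_p$ and then raised to the power $p$, which produces cross terms of the form $W_p^{p-1}\cdot(\text{error})$ that, after invoking Young's inequality to separate them, force a multiplicative constant strictly greater than $1$ in front of $W_p^p(\mu^1,\mu^2)$ and destroy the contraction factor $1-b_*(t-s)$. This is precisely why a different, more refined tool (the sewing lemma, developed in the remainder of Section~\ref{sec:sew}) is needed to go beyond $W_1$.
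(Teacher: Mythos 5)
Your proof is correct and follows essentially the same route as the paper: the triangle inequality combined with the self-coupling of $\Theta^1$ and the closeness hypothesis gives the $(1,b_*,\varepsilon)$-coupling, and $1$-boundedness of $\Theta^2$ then follows from Lemma~\ref{lem_wass}. Your explicit two-sided insertion of $\Theta^1$ to establish the self-coupling of $\Theta^2$ (and your remark on why the argument is confined to $p=1$) is exactly the intended argument, which the paper's written proof leaves implicit.
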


\begin{proof}
Let $\mu^{1},\mu^{2}\in \mathcal{P}_{1}(B)$. We use the triangle inequality,  the fact that $\Theta ^{1}$ is $(1,b_{\ast },\varepsilon )$ self-coupled and the assumption to get
\begin{align*}
W_{1}(\Theta _{s,t}^{1}(\mu^{1}),\Theta _{s,t}^{2}(\mu^{2}))&\leq
W_{1}(\Theta _{s,t}^{1}(\mu^{1}),\Theta _{s,t}^{1}(\mu^{2}))+W_{1}(\Theta _{s,t}^{1}(\mu^{2}),\Theta _{s,t}^{2}(\mu^{2})) \\
&\leq (1-b_*(t-s)) W_1(\mu^1,\mu^2) +C_* \Gamma_1(\mu^1,\mu^2)(t-s)^{1+\varepsilon} \\
&\quad +C(1+\|\mu^2\|_1)(t-s)^{1+\varepsilon}.
\end{align*}%
We thus get that $\Theta ^{1}$
and $\Theta ^{2}$ are $(1,b_{\ast },\varepsilon )$-coupled. \blue{We then use  Lemma~\ref{lem_wass} and pick $h'\in(0,h)$ that fulfills~\eqref{AN14bis} to get that} $\Theta ^{2}$ is also $1-$bounded. 
\end{proof}
We see here why we need $p=1$. For $p>1$, we would apply the triangular inequality and then $(a+b)^p\le 2^{p-1}(a^p+b^b)$:  the multiplicative factor $2^{p-1}>1$  prevents to get the coupling property~\eqref{AN1}.

\bigskip

We now present the sewing lemma. To do so, we define 
\begin{equation*}
\mathcal{E}_{1}(\mathcal{P}_{1}(B))=\left\{ \Theta :\mathcal{P}%
_{1}(B)\rightarrow \mathcal{P}_{1}(B):\sup_{\mu \in \mathcal{P}_{1}(B)}\frac{%
\Vert \Theta (\mu )\Vert _{1}}{1+\Vert \mu \Vert _{1}}<\infty \right\} ,
\end{equation*}%
that we endow with the following distance 
\begin{equation*}
\mathcal{D}_{1}(\Theta ,\overline{\Theta })=\sup_{\mu \in \mathcal{P}%
_{1}(B)}\frac{W_{1}(\Theta (\mu ),\overline{\Theta }(\mu ))}{1+\Vert \mu
\Vert _{1}}.
\end{equation*}%
From \cite[Theorem 6.18]{Villani},  $(%
\mathcal{P}_{1}(B),W_{1})$ is a complete metric space. By \cite[Lemma B.1]%
{AB}, $(\mathcal{E}_{1}(\mathcal{P}_{1}(B)),\mathcal{D}_{1})$ is a
complete metric space. For $U,V\in \mathcal{E}_{1}(\mathcal{P}_{1}(B))$,
we note $UV=U\circ V$ and we check easily that $UV\in \mathcal{E}_{1}$.
From~\cite[Lemma 2.1]{AB}, we have the following sewing lemma.

\begin{lemma}
(Sewing lemma) Let $T>0$ and suppose that $(\Theta_{s,t})_{0\le s\le t \le
T} $ is a family such that $\Theta_{s,t}\in \mathcal{E}_{1}$ and:

\begin{enumerate}
\item $\sup_{0\le s\le t \le T}\mathcal{D}_{1}(\Theta_{s,t},Id)<\infty$,

\item there exists $C_{\textup{lip}}$ such that for any $0\leq s\leq t\leq
T $, $n\in \mathbb{N}^{\ast }$, any time grid $\pi =\left\{
t_{0}=s<t_{1}<\dots <t_{n}=t\right\} $ and every $U,\tilde{U}\in \mathcal{E}%
_{1}(\mathcal{P}_{1}(B))$, 
\begin{equation*}
\mathcal{D}_{1}(\Theta _{s,t}^{\pi }U,\Theta _{s,t}^{\pi }\tilde{U})\leq
C_{\textup{lip}}\mathcal{D}_{1}(U,\tilde{U}),
\end{equation*}

\item there exists $C_{\textup{sew}}$ and $\varepsilon >0$ such that for
any $0\leq s<u<t<T$ , 
\begin{equation*}
\mathcal{D}_{1}(\Theta _{s,t}U,\Theta _{u,t}\Theta _{s,u}\tilde{U})\leq C_{%
\textup{sew}}(t-s)^{1+\varepsilon }.
\end{equation*}
\end{enumerate}

Then, there exists a unique family $\theta_{s,t}\in \mathcal{E}_{1}(%
\mathcal{P}_1(B))$ which is a  flow such that 
\begin{equation*}
\exists C>0, \ \forall s\le t \le T,\ \mathcal{D}_{1}(\theta_{s,t},%
\Theta_{s,t})\le C(t-s)^{1+\varepsilon}.
\end{equation*}
Besides, it satisfies $\mathcal{D}_{1}(\theta _{s,t}U,\theta _{s,t} \tilde{U})\leq
C_{\textup{lip}}\mathcal{D}_{1}(U,\tilde{U})$, which gives in particular the Lipschitz property $W_1(\theta_{s,t}(\mu),\theta_{s,t}(\tilde{\mu}))\le C_{\textup{lip}} W_1(\mu,\tilde{\mu})$ for $\mu, \tilde{\mu}\in \mathcal{P}_1(B)$ and thus the continuity property~\eqref{eq:++}. Last, $\theta$ is time homogeneous if $\Theta_{s,t}=\Theta_{0,t-s}$.
\end{lemma}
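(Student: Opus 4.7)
The plan is to construct $\theta_{s,t}$ as the $\mathcal{D}_1$-limit of the discretizations $\Theta^{\pi_n}_{s,t}$ along a sequence of dyadic partitions $\pi_n$ of $[s,t]$ with mesh $|\pi_n| = 2^{-n}(t-s)$, exploiting the completeness of $(\mathcal{E}_1(\mathcal{P}_1(B)), \mathcal{D}_1)$. The crucial estimate to prove first is a one-step refinement bound: if $\pi'$ is obtained from $\pi = \{s = t_0 < \dots < t_N = t\}$ by inserting a point $u_k \in (t_{k-1}, t_k)$ in each interval, then
\[
\mathcal{D}_1(\Theta^{\pi}_{s,t}, \Theta^{\pi'}_{s,t}) \le C_{\textup{lip}} C_{\textup{sew}} \sum_{k=1}^{N} (t_k - t_{k-1})^{1+\varepsilon} \le C_{\textup{lip}} C_{\textup{sew}}\, (t-s) |\pi|^\varepsilon.
\]
This is obtained by adding the subdivision points one at a time, writing $\Theta^{\pi}_{s,t} = A_k \circ \Theta_{t_{k-1},t_k} \circ B_k$ with $B_k = \Theta^{\pi|_{[s,t_{k-1}]}}_{s,t_{k-1}}$ and $A_k = \Theta^{\pi|_{[t_k,t]}}_{t_k,t}$: hypothesis (2) absorbs the left factor $A_k$ with constant $C_{\textup{lip}}$, and hypothesis (3) applied with $U = \tilde U = B_k$ controls the central modification by $C_{\textup{sew}}(t_k-t_{k-1})^{1+\varepsilon}$; one then telescopes by the triangle inequality.

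Iterating this refinement bound shows that $(\Theta^{\pi_n}_{s,t})_n$ is Cauchy and converges to some $\theta_{s,t} \in \mathcal{E}_1$; comparing two dyadic sequences via a common refinement shows the limit is independent of the chosen sequence. Taking $\pi_0 = \{s,t\}$ and summing the geometric series $\sum_n 2^{-n\varepsilon}$ gives the announced rate $\mathcal{D}_1(\theta_{s,t}, \Theta_{s,t}) \le C(t-s)^{1+\varepsilon}$. For uniqueness, if $\theta'$ is another flow in $\mathcal{E}_1$ satisfying the same rate, I would use the flow properties of $\theta$ and $\theta'$ to expand both along an arbitrary partition $\pi$ of $[s,t]$ as a composition of small-time operators, bound each local discrepancy by the rate $C(t_k-t_{k-1})^{1+\varepsilon}$, and telescope using the inherited Lipschitz inequality (obtained by passing (2) to the limit). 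The total error is $O((t-s)|\pi|^\varepsilon)$ and vanishes as $|\pi| \to 0$.

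For the remaining properties: passing (2) to the limit gives $\mathcal{D}_1(\theta_{s,t} U, \theta_{s,t} \tilde U) \le C_{\textup{lip}} \mathcal{D}_1(U, \tilde U)$, which specialises at constant maps to the $W_1$-Lipschitz property of $\mu \mapsto \theta_{s,t}(\mu)$ and hence to the continuity~\eqref{eq:++}. For the flow property, I would pick a partition $\pi$ of $[s,t]$ containing $u$ and write $\Theta^{\pi}_{s,t} = \Theta^{\pi|_{[u,t]}}_{u,t} \circ \Theta^{\pi|_{[s,u]}}_{s,u}$, then pass to the limit as $|\pi|\to 0$; the left factor converges by the Lipschitz hypothesis, while convergence of the right-composition is obtained via the growth bound $\sup_\mu (1+\|\theta_{s,u}(\mu)\|_1)/(1+\|\mu\|_1) < \infty$ built into $\mathcal{E}_1$, which relates the weighted normalisations in $\mathcal{D}_1$ before and after composition with $\theta_{s,u}$. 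Time homogeneity passes immediately to $\theta$ from the translation invariance of the dyadic construction when $\Theta_{s,t}=\Theta_{0,t-s}$. The main technical obstacle I anticipate is precisely this last composition argument: hypothesis (2) only gives left-Lipschitz control, so right-composition must be handled through the $\mathcal{E}_1$-growth bound, and care is needed to verify that the Cauchy and limiting estimates survive the renormalisation $1+\|\cdot\|_1$.
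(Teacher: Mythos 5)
You should first note that the paper does not prove this lemma at all: it is imported verbatim from \cite[Lemma 2.1]{AB}, so there is no internal proof to compare against, and your sketch must be judged against the standard sewing argument that the cited reference follows. Your outline is essentially that argument, and its core is sound: the one-step refinement bound is correctly obtained by absorbing the later-time factor $A_k$ through hypothesis (2) (which holds for arbitrary grids, hence for the hybrid grids appearing in the telescoping) and applying hypothesis (3) with $U=\tilde U=B_k$; the dyadic iteration, the geometric series giving $\mathcal{D}_1(\theta_{s,t},\Theta_{s,t})\le C(t-s)^{1+\varepsilon}$, the passage of (2) to the limit, and the treatment of right-composition through the $\mathcal{E}_1$-growth bound (uniform thanks to hypothesis (1) together with the rate) are all the right moves, and you correctly identified the right-composition renormalisation as the delicate point.

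The one place where your sketch is genuinely thinner than what is needed is the claim that ``comparing two dyadic sequences via a common refinement shows the limit is independent of the chosen sequence.'' Your refinement estimate only covers insertion of one point per interval, and iterating it to reach an arbitrary refinement of $\pi$ produces a cost of order $(t-s)\,|\pi|^{\varepsilon}$ \emph{per insertion stage}, so the naive bound carries a factor equal to the number of stages (roughly $\log$ of the number of inserted points) and is not uniform over refinements. This matters precisely where you need sequence-independence: for the flow property, the partitions of $[s,t]$ containing $u$ (concatenations of dyadic grids of $[s,u]$ and $[u,t]$) are not dyadic partitions of $[s,t]$, and for uniqueness you compare along arbitrary grids. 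The gap is fixable in two standard ways: either prove the uniform estimate $\mathcal{D}_1(\Theta_{a,b}U,\Theta^{\sigma}_{a,b}U)\le K\,(b-a)^{1+\varepsilon}$ for every partition $\sigma$ of $[a,b]$ by the Young-type removal argument (successively delete a point whose two adjacent intervals have total length at most $2(b-a)/(m-1)$, so the costs sum to $K(b-a)^{1+\varepsilon}\sum_k k^{-(1+\varepsilon)}$), or observe that in the specific comparisons you need the two meshes are comparable, so the common refinement is reached in a bounded number of one-point-per-interval stages. Either route closes the argument; as written, the independence-of-the-limit step is asserted rather than proved.
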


\noindent Therefore, the sewing lemma gives the estimate 
\begin{equation}\label{eq_sew}
W_{1}(\theta_{s,t}(\mu),\Theta_{s,t}(\mu))\le C(t-s)^{1+\varepsilon}
(1+\|\mu\|_{1})
\end{equation}
that is needed to apply Lemma~\ref{lem_coupling}. More precisely, Lemma~\ref%
{lem_coupling} can be then applied in two ways:

\begin{itemize}
\item If $\Theta $ is $1$-bounded and $(1,b_{\ast },\varepsilon )$
self-coupled, then $\Theta $ and $\theta $ are $(1,b_{\ast },\varepsilon )$%
-coupled and $\theta $ is also $1$-bounded.

\item If we are able to prove independently (relying on continuous time
arguments) that $\theta $ is $1$-bounded and is $(1,b_{\ast
},\varepsilon )$ self-coupled, then $\Theta $ and $\theta $ are $%
(1,b_{\ast },\varepsilon )$-coupled and $\Theta $ is $1$-bounded. In
particular, if $\theta$ is time homogeneous and $\nu $ is the invariant measure for $\theta$, and if we take the time sequence $(t_{n})$ as in Theorem~\ref{FINAL}
then, by (\ref%
{A5}), for every $\mu \in \mathcal{P}_{1}(B)$,
\begin{equation*}
W_{1}(\Theta _{s,t_{n}}^{\pi }(\mu ),\nu )\leq A 
n^{-b_{\ast }\wedge \varepsilon }(1+\ln (n+1)^{\mathbf{1}_{b_{\ast
}=\varepsilon }}).
\end{equation*}%

\end{itemize}

\subsection{ A Foster-Lyapunov type criterion to prove $p$-boundedness}
\label{sec:51}
\noindent

We consider now a family of operators $\Theta _{s,t}:\mathcal{P}_p(B)\to\mathcal{P}_p(B)$ and we discuss the boundedness property. For a $B$-valued random variable $X$, we note $\| X \|_p=\mathbb{E} [d(X,x_0)^p]^{1/p}$.

\begin{definition}
	\label{def_FL} We say that a family of deterministic operators $\Theta
	_{s,t}:\mathcal{P}_{p}(B)\rightarrow \mathcal{P}_{p}(B)$, satisfies the $%
	p$-Foster-Lyapunov condition if there exist some constants $\overline{b},%
	\overline{C},h\in (0,1)$, such that for every $s\leq t\leq s+h$ and every $\mu \in 
	\mathcal{P}_{p}(B)$, 
	\begin{equation}
		\left\Vert \Theta _{s,t}(\mu )\right\Vert _{p}^{p}\leq (1-\overline{b}%
		(t-s))\left\Vert \mu \right\Vert _{p}^{p}+\overline{C}(t-s).  \label{AN2}
	\end{equation}
\end{definition}
Note that if we prove \eqref{AN2} for some distance $ d(x,y) $, this does not imply that the same property is satisfied for an equivalent distance due to the factor $ (1-\overline{b}%
(t-s)) $ in the property \eqref{AN2}.
Thus, in some cases, one may be able to
prove~\eqref{AN2} for some special distance~$d$, but not for~$|\cdot |$, or
conversely. The constants throughout the article depend on $p$ and $d$
without making this dependence explicit as we will always consider $p$ and $%
d $ to be fixed.

If~\eqref{AN2} holds, then we clearly have 
\begin{equation}
	\left\Vert \Theta _{s,t}(\mu )\right\Vert _{p}^{p}\leq e^{-{\bar{b}}%
		(t-s)}\left\Vert \mu \right\Vert _{p}^{p}+\overline{C}(t-s).
	\label{AN2bis}
\end{equation}%
Conversely, if~\eqref{AN2bis} holds, we get by a Taylor expansion of the
exponential function that there exists $0<h^{\prime }<h$ such that for $%
0\leq t-s\leq h^{\prime }$, \eqref{AN2} holds with $\bar{b}/2$ instead of $%
\bar{b}$. Thus, the conditions~\eqref{AN2} and~\eqref{AN2bis} are
essentially equivalent, but we prefer to use~\eqref{AN2} because we usually
directly get this type of estimates on practical examples. 

\begin{remark}
	The hypothesis (\ref{AN2}) is a variant of the Foster-Lyapunov drift
	condition (see e.g.~\cite{MeTw}, \cite[Eq.~(13)]{JoHo} and~\cite[Eq.~(6)]%
	{DuMo}) which is used in the Monte Carlo Markov Chain (MCMC) literature. Let
	us consider the case of a time homogeneous family, i.e. $\Theta_{s,t}(\mu)=
	\mu P_{t-s}$, where $P_{\gamma }(x,dy)$, $\gamma>0$ is a probability kernel
	and $\mu P_{\gamma }(dy)=\int_B P_{\gamma }(x,dy)\mu (dx) $ as usual.
	Following~\cite{DuMo}, a function $V$ satisfy a Foster-Lyapunov drift
	condition for $P_{\gamma}$ if there exists $\bar{\gamma}>0$, $\lambda \in
	(0,1)$ and $c\geq 0$ such that $P_{\gamma}V(x)\leq \lambda^{\gamma
	}V(x)+c\gamma$ for all $\gamma\in (0,\bar{\gamma}]$. If $V(x)=|x|^p$
	satisfies this Foster-Lyapunov drift condition, then integrating this
	inequality with respect to $\mu $ gives precisely~\eqref{AN2bis} with $%
	\bar{b}=-\log(\lambda)$, $h=\bar{\gamma}$ and $d(x,y)=|x-y|$. Conversely,
	if~\eqref{AN2bis} holds with $d(x,y)=|x-y|$ then we get by taking $%
	\mu=\delta_x$ that $P_{\gamma}V(x)\leq \lambda^{\gamma }V(x)+c\gamma$ for $%
	V(x)=|x|^p$.
\end{remark}

\begin{remark}
	\label{rk_AN2BIS} Suppose that one may find constants $\widehat{b},%
	\widetilde{C},\widehat{C},{\varepsilon },h>0$ such that for every $s\leq
	t\leq t+h$ and $\mu \in \mathcal{P}_{p}(B)$, 
	\begin{equation}
		\left\Vert \Theta _{s,t}(\mu )\right\Vert _{p}^{p}\leq (1-\widehat{b}%
		(t-s))\left\Vert \mu \right\Vert _{p}^{p}+\widetilde{C}(t-s)+\widehat{C}%
		(1+\left\Vert \mu \right\Vert _{p}^{p})(t-s)^{1+{\varepsilon }}.
		\label{AN2BIS}
	\end{equation}%
	Then, for every $0\leq t-s\leq \min \left( h,\left( \frac{\widehat{b}}{2%
		\widehat{C}}\right) ^{1/{\varepsilon }}\right) $, (\ref{AN2}) holds with $%
	\overline{b}=\widehat{b}/2$, and $\overline{C}=\widetilde{C}+\widehat{C}h^{{%
			\varepsilon }}.$
\end{remark}

We consider a family of deterministic operators $\Theta _{s,t}:\mathcal{P}
_{p}(B)\rightarrow \mathcal{P}_{p}(B),s\leq t$ and recall the notations and definitions given in Section \ref{Sec_Framework}.
We now prove an estimate which is an iterated version of (\ref{AN2}).

\begin{lemma}
	\label{lem_bound} Suppose that the family $\Theta _{s,t}$, $s\leq t$
	satisfies the $p$-Foster-Lyapunov condition, and let $\overline{b},%
	\overline{C},h>0$ be such that~\eqref{AN2} holds. Let $(t_{i})_{i\in {%
			\mathbb{N}}}$ be an increasing sequence such that $t_{0}\geq 0$ and $t_{i}-t_{i-1}\le h$ for $i\in {\mathbb{N}}^{\ast }$. Then, for every $n\in {\mathbb{N}}$ 
	\begin{equation}
		\left\Vert \Theta _{t_{0},t_{n}}^{\pi }(\mu )\right\Vert _{p}^{p}\leq e^{-%
			\overline{b}(t_{n}-t_{0})}\left\Vert \mu \right\Vert _{p}^{p}+\overline{C}%
		\frac{e^{\overline{b}}}{\overline{b}}.  \label{AN7}
	\end{equation}%
	In particular $\Theta $ is $p-$bounded and%
	\begin{equation}
		M_{p}(\Theta ,\mu \blue{,h })\leq \left\Vert \mu \right\Vert _{p}^{p}+\overline{C}%
		\frac{e^{\overline{b}}}{\overline{b}}.  \label{AN7bis}
	\end{equation}%
	In addition, assume that $\Theta $ is a flow (i.e. $\Theta _{s,t}=\Theta
	_{r,t}\circ \Theta _{s,r}$ for every $s<r<t)$. Then, for every $s<t$ and
	every $\mu \in \mathcal{P}_{p}(B)$, one has 
	\begin{equation}
		\left\Vert \Theta _{s,t}(\mu )\right\Vert _{p}^{p}\leq e^{-\overline{b}%
			(t-s)}\left\Vert \mu \right\Vert _{p}^{p}+\overline{C}\frac{e^{\overline{b}%
		}}{\overline{b}}.  \label{AN7'}
	\end{equation}
\end{lemma}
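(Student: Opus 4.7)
The plan is to iterate the one-step Foster-Lyapunov bound \eqref{AN2} along the grid and then compare the resulting discrete sum to a Riemann integral.

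First, I set $a_k = \|\Theta^\pi_{t_0,t_k}(\mu)\|_p^p$ and $\gamma_k = t_k - t_{k-1}$. Since the steps are nonincreasing and $\gamma_1 \le h$, we have $\gamma_k \le h$ for every $k \ge 1$, so \eqref{AN2} applies at each step and gives
\[
a_k \le (1 - \bar{b}\gamma_k)\, a_{k-1} + \bar{C}\gamma_k \le e^{-\bar{b}\gamma_k}\, a_{k-1} + \bar{C}\gamma_k,
\]
using $1-x \le e^{-x}$. Unrolling this recursion yields
\[
a_n \le e^{-\bar{b}(t_n - t_0)}\, a_0 + \bar{C}\sum_{k=1}^{n} e^{-\bar{b}(t_n - t_k)}\gamma_k.
\]

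The key remaining step is to bound the sum by $e^{\bar{b}}/\bar{b}$. For $u \in [t_{k-1}, t_k]$, the function $u \mapsto e^{-\bar{b}(t_n - u)}$ is increasing and lies in $[e^{-\bar{b}(t_n - t_{k-1})}, e^{-\bar{b}(t_n - t_k)}]$, so
\[
\int_{t_{k-1}}^{t_k} e^{-\bar{b}(t_n - u)}\, du \ge \gamma_k\, e^{-\bar{b}(t_n - t_{k-1})} = \gamma_k\, e^{-\bar{b}\gamma_k}\, e^{-\bar{b}(t_n - t_k)},
\]
and hence $\gamma_k\, e^{-\bar{b}(t_n - t_k)} \le e^{\bar{b}\gamma_k}\int_{t_{k-1}}^{t_k} e^{-\bar{b}(t_n - u)}\, du \le e^{\bar{b}h}\int_{t_{k-1}}^{t_k} e^{-\bar{b}(t_n - u)}\, du$. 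Summing over $k$ and using $h \le 1$,
\[
\sum_{k=1}^n e^{-\bar{b}(t_n - t_k)}\gamma_k \le e^{\bar{b}h}\int_{t_0}^{t_n} e^{-\bar{b}(t_n-u)}\,du \le \frac{e^{\bar{b}}}{\bar{b}}.
\]
This gives \eqref{AN7}, and taking the supremum over $n$ and admissible grids gives \eqref{AN7bis}, proving that $\Theta$ is $p$-bounded.

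For the flow statement \eqref{AN7'}, I use that any $s<t$ can be subdivided into finitely many intervals of length at most $h$: pick $N = \lceil (t-s)/h \rceil$ and $t_i = s + i(t-s)/N$ for $i=0,\dots,N$, which defines a grid $\pi$ with nonincreasing (in fact constant) steps of size $\le h$. By the flow property, $\Theta_{s,t} = \Theta^\pi_{t_0,t_N}$, and applying \eqref{AN7} to this particular grid yields \eqref{AN7'} directly.

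The only slightly delicate point is the Riemann comparison bounding the discrete sum uniformly in the grid; the factor $e^{\bar{b}h} \le e^{\bar{b}}$ is precisely where the hypothesis $h \le 1$ enters, and the nonincreasing property of the steps is used only to guarantee $\gamma_k \le h$ for all $k$. Everything else is a straightforward iteration of the one-step estimate.
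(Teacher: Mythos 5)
Your proof is correct and follows essentially the same route as the paper: iterate the one-step bound \eqref{AN2} to get $a_n\le e^{-\overline{b}(t_n-t_0)}a_0+\overline{C}\sum_{k=1}^n e^{-\overline{b}(t_n-t_k)}\gamma_k$ (the paper does this via a weighted telescoping sum rather than unrolling the recursion, which is the same computation), then control the sum by comparison with $\int_{t_0}^{t_n}e^{-\overline{b}(t_n-u)}\,du$ at the cost of a factor $e^{\overline{b}\gamma_k}\le e^{\overline{b}}$, and finally handle the flow case by a uniform subdivision of $[s,t]$ into steps of length at most $h$.
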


\begin{proof}
	We denote $a_{k}=\left\Vert \Theta _{t_{0},t_{k}}^{\pi }(\mu )\right\Vert
	_{p}^{p}$ and write 
	\begin{equation*}
		e^{\overline{b}t_{n}}a_{n}=e^{\overline{b}t_{0}}a_{0}+\sum_{k=1}^{n}(e^{%
			\overline{b}t_{k}}a_{k}-e^{\overline{b}t_{k-1}}a_{k-1}).
	\end{equation*}%
	Using the property (\ref{AN2}) (with $t_{k}-t_{k-1}=\gamma _{k}\leq \gamma
	_{1}\leq h)$ 
	\begin{align*}
		e^{\overline{b}t_{k}}a_{k}-e^{\overline{b}t_{k-1}}a_{k-1}=&e^{\overline{b}%
		t_{k}}(1-e^{-\overline{b}\gamma _{k}})a_{k-1}+e^{\overline{b}%
		t_{k}}(a_{k}-a_{k-1}) \\
	\leq& e^{\overline{b}t_{k}}(\overline{b}\gamma _{k}a_{k-1}-\overline{b}%
	\gamma _{k}a_{k-1}+\overline{C}\gamma _{k})=\overline{C}e^{\overline{b}%
		t_{k}}\gamma _{k}.
	\end{align*}
	Therefore, we get 
	\begin{equation*}
		e^{\overline{b}t_{n}}a_{n}\leq e^{\overline{b}t_{0}}a_{0}+\overline{C}%
		\sum_{k=1}^{n}e^{\overline{b}t_{k}}\gamma _{k},
	\end{equation*}%
	which gives 
	\begin{align*}
		a_{n}& \leq e^{-\overline{b}(t_{n}-t_{0})}a_{0}+\overline{C}%
		\sum_{k=1}^{n}e^{-\overline{b}(t_{n}-t_{k})}\gamma _{k} \\
		& \leq e^{-\overline{b}(t_{n}-t_{0})}a_{0}+\overline{C}e^{\overline{b}%
		}\sum_{k=1}^{n}e^{-\overline{b}(t_{n}-t_{k-1})}\gamma _{k}\leq e^{-\overline{%
				b}(t_{n}-t_{0})}a_{0}+\overline{C}\frac{e^{\overline{b}}}{\overline{b}}.
	\end{align*}%
	The last inequality uses that $e^{\overline{b}t_{k-1}}\gamma _{k}\leq
	\int_{t_{k-1}}^{t_{k}}e^{\overline{b}s}ds$ and exact integration. So (\ref%
	{AN7})  and \eqref{AN7bis} are proved.
	
	We now assume that $\Theta $ is a flow. We take $n$ such that $(t-s)/n<h$
	and consider the uniform grid $t_{k}=s+k\frac{t-s}{n}$ so that $t_{0}=s$ and 
	$t_{n}=t$. We use the previous result for the time grid $\pi =\{t_{k},k\in {%
		\mathbb{N}}\}$ and the flow property $\Theta _{t_{0},t_{n}}^{\pi }=\Theta
	_{s,t}$ so that~(\ref{AN7'}) coincides with~(\ref{AN7}).
\end{proof}

In many situations to follow in the examples, one applies 
a sequence of arguments which are a recall of the results of Proposition~\ref{stochastic}, Theorem~\ref{FINAL} and Proposition~\ref{prop:29}. They are summarized in the next theorem.
\begin{theorem}
	\label{th:app}
	Let $ \Theta, \theta :\mathcal{P}_p(B)\to \mathcal{P}_p(B) $ and that  $\theta  $ is a time homogeneous continuous flow. Assume that we have two joint probabilistic representations for $(\Theta,\theta)$ and $(\theta,\theta)$ in the sense of Definition~\ref{def:214}. Furthermore, we assume that they satisfy  \eqref{AN2BIS}. Then the invariant measure associated to $ \theta $, denoted by $ \nu $ exists and is unique.
	It satisfies $\Vert \nu \Vert
	_{p}^{p}=M_{p}(\theta ,\nu )$ and we have for every $s\leq t$ and every $%
	\mu \in \mathcal{P}_{p}(B)$, 
	\begin{equation}
		W_{p}^{p}(\theta _{s,t}(\mu ),\nu )\leq C(1+\|\mu\|_p^p) e^{-\varepsilon \wedge
			b_{\ast }(t-s)}(1+(t-s)^{\mathbf{1}_{b_{\ast }=\varepsilon }})
		\label{AN141}
	\end{equation}%
	where the constant $C\in \R_+$ does not depend on~$\mu$.
	
	 Furthermore, 
	let $a\ge 0$, $\gamma _{n}=\frac{1}{a+n}$ and $%
	t_{n}=s+\gamma _{1}+...+\gamma _{n}$, $n\in {\mathbb{N}}$. Then, there exists $C\in \R_+$ such that  for
	every $\mu \in \mathcal{P}_{p}(B)$,
	\begin{equation}
		W_{p}^{p}(\Theta _{s,t_{n}}^{\pi }(\mu ),\nu )\leq C(1+\|\mu\|_p^p) n^{-b_{\ast }\wedge
			\varepsilon }(1+\ln (n+1)^{\mathbf{1}_{b_{\ast }=\varepsilon }}  ). \label{A51}
	\end{equation}%
	\end{theorem}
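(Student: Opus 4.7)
The plan is to verify that the hypotheses of Proposition~\ref{prop:29} and Theorem~\ref{FINAL} are in force, and then invoke them directly. First, I would use the two joint probabilistic representations: Proposition~\ref{stochastic} applied to the pair $(\theta,\theta)$ yields that $\theta$ is $(p,b_\ast,\varepsilon)$ self-coupled, and applied to the pair $(\Theta,\theta)$ yields that $\Theta$ and $\theta$ are $(p,b_\ast,\varepsilon)$-coupled. This handles the coupling side of the hypotheses, with the same $b_\ast$, $\varepsilon$, $C_\ast$, and $h$ up to taking a common minimum.

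Next, I would upgrade the estimate \eqref{AN2BIS} into a genuine $p$-Foster--Lyapunov condition \eqref{AN2} for both $\theta$ and $\Theta$. This is precisely the content of Remark~\ref{rk_AN2BIS}: by possibly decreasing $h$ to $\min\bigl(h,(\widehat b/(2\widehat C))^{1/\varepsilon}\bigr)$, the last term in \eqref{AN2BIS} is absorbed, and one obtains \eqref{AN2} with halved drift constant. Lemma~\ref{lem_bound} then gives that both $\theta$ and $\Theta$ are $p$-bounded with the explicit bound $M_p(\theta,\mu),M_p(\Theta,\mu)\le \|\mu\|_p^p+\overline{C}e^{\overline b}/\overline b$. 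At this stage I also shrink $h$ further if necessary so that the smallness condition \eqref{AN14bis} holds, which is always possible since $\sigma_{b_\ast,\varepsilon}(k)\to 0$ as $h\to 0$ for the grid $\gamma_n=1/(n+h^{-1})$.

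With $\theta$ being a continuous time-homogeneous $p$-bounded flow which is $(p,b_\ast,\varepsilon)$ self-coupled, Proposition~\ref{prop:29} applies and produces a unique invariant measure $\nu\in\mathcal{P}_p(B)$ with $\|\nu\|_p^p=M_p(\theta,\nu)$ and the exponential convergence estimate \eqref{AN14}, which is exactly \eqref{AN141} with the constant $A$ given by Corollary~\ref{cor:2.8}. Then, since $\Theta$ is $(p,b_\ast,\varepsilon)$-coupled with the flow $\theta$ (and both are $p$-bounded) with a common $h$ for which \eqref{AN14bis} holds, Theorem~\ref{FINAL} applies verbatim on the grid $t_n=s+\sum_{i=1}^n 1/(i+h^{-1})$ and yields the approximation bound \eqref{A51}.

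The bulk of the work is essentially bookkeeping: all the substantive estimates have already been established. The only real care to take is the choice of $h$, which must be simultaneously small enough (i) for the coupling inequality \eqref{AN1} to hold, (ii) for \eqref{AN2BIS} to imply \eqref{AN2} via Remark~\ref{rk_AN2BIS}, and (iii) for the contraction condition \eqref{AN14bis} on $\sigma_{b_\ast,\varepsilon}$ to hold. Since each constraint is an upper bound on $h$ and $C_\ast$ is independent of $h$, taking the minimum of finitely many positive thresholds suffices; this is the only mild technicality and is not an obstacle in practice.
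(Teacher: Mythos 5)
Your proposal is correct and follows exactly the route the paper intends: the theorem is stated as a recollection of Proposition~\ref{stochastic} (coupling from the joint probabilistic representations), Remark~\ref{rk_AN2BIS} and Lemma~\ref{lem_bound} (Foster--Lyapunov and $p$-boundedness from \eqref{AN2BIS}), and then Proposition~\ref{prop:29} and Theorem~\ref{FINAL}, with $h$ chosen small enough that \eqref{AN1}, \eqref{AN2} and \eqref{AN14bis} hold simultaneously. Your handling of the common choice of $h$ (noting that $\sigma_{b_\ast,\varepsilon}(k)\le \sum_{i\ge 1}(i+h^{-1})^{-(1+\varepsilon)}\to 0$ as $h\to 0$) is the only point needing care, and you treat it adequately.
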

\blue{In contrast with Theorem~\ref{th:main}, we assume here that the Foster Lyapunov criterion~\eqref{AN2BIS} hold for $\theta$ and $\Theta$, which gives a uniform  estimate  with respect to $\mu$, see Remark~\ref{rk_cvunifmu}. }

\section{Examples with non global contraction conditions (Langevin type equations)}\label{sec_Eberle}

 The
present section is devoted to prove that, once the convergence of the semigroup to the invariant measure is obtained, the estimate of the error done by using Euler scheme, is possible (see Lemma~\ref{lem_coupling}). We illustrate this by two non
trivial examples.

In several recent papers (see in particular Eberle~\cite{Eberle}, Eberle et al.~\cite{EGZ2} and Schuh~\cite{Schuh} ) authors have used a
special coupling procedure, called ``reflecting coupling" in order to prove
convergence to equilibrium for the Langevin type equations. An important point is that
the result holds under a weak contraction hypothesis (i.e. out of a compact set), 
in contrast with older results in which uniform construction properties
are needed. It turns out that we are able to use Eberle's result concerning
the flow in order to prove convergence of the Euler scheme as well and to
control the speed of convergence. First, we consider the classical Langevin
equation given by%
\begin{equation}
	\label{eq:lan1}
dX_{t}=b(X_{t})dt+  d B_{t}
\end{equation}%
where $B$ is an $m$ dimensional Brownian motion and $b:\R^{d}\rightarrow \R^{d}$ is a Lipschitz
continuous function. We assume that%
\begin{equation}\label{lip_b}
\left\vert b(x)-b(y)\right\vert \leq L_{b}\left\vert
x-y\right\vert
\end{equation}%
and that there exists $R,\kappa>0$ such that 
\begin{equation}\label{weak_c}
\left\langle x-y,b(x)-b(y)\right\rangle \leq -\kappa \left\vert
x-y\right\vert ^{2}\quad \text{if}\quad \left\vert x-y\right\vert \geq R.
\end{equation}%
Then, we are precisely in the framework of~\cite[Example 1]{Eberle}. In particular, in~\cite[Equation (10)]{Eberle}, the author finds a function $%
f:\R_{+}\rightarrow \R_{+}$ which is
concave, increasing, such that $f(0)=0$, $f^{\prime }(0)=1$ and $0<c_{\ast }\leq
f^{\prime }(r)\leq 1$ for every $r$. In what follows, the precise expression of $f$ is not important for us. To the
function $f$ one associates the distance%
\begin{equation*}
d_{f}(x,y)=f(\left\vert x-y\right\vert )
\end{equation*}%
which is equivalent with the Euclidean distance: $c_*|x-y|\le d_{f}(x,y)\le |x-y|$. We note for $\mu,\nu \in\mathcal{P}_1(\R^d)$
$$W_1(\mu,\nu)=\inf_{\rho \in \Pi (\mu ,\nu
	)}\int_{\R^d}\int_{\R^d}d_f(x,y) \rho (dx,dy),$$
the corresponding $1$-Wasserstein distance, and  $W^{|\cdot|}_1(\mu,\nu)=\inf_{\rho \in \Pi (\mu ,\nu
	)}\int_{\R^d}\int_{\R^d} |x-y| \rho (dx,dy)$ the classical 1-Wasserstein distance. We then have
	\begin{equation}\label{equiv_norm_W1}
		c_*W^{|\cdot|}_1(\mu,\nu)\le W_1(\mu,\nu)\le W_1^{|\cdot|}(\mu,\nu).
	\end{equation}
	It is equivalent to the standard $1$-Wasserstein distance 
Then, \cite[Corollary 2 (13) and
Corollary 3]{Eberle} gives
\begin{theorem}
	\label{th:41}
Suppose that \eqref{lip_b} and \eqref{weak_c} hold and let $\theta _{s,t}(\mu )$ be the law of $X_t$ when $X_s\sim \mu$ is independent of~$(B_u-B_s,u\ge s)$. 
Then, there is a constant $b_*>0$ such that  for any $\mu^1,\mu^2 \in \mathcal{P}_1(\R^d)$,
\begin{equation}
W_{1}(\theta _{s,t}(\mu^1 ),\theta _{s,t}(\mu^2 ))\leq W_{1}(\mu^1
,\mu^2 )e^{-b_{\ast }(t-s)}.  \label{E2}
\end{equation}%
 Moreover, $\theta $ has a
unique invariant measure $\nu\in \mathcal{P}_1(\R^d)$.
\end{theorem}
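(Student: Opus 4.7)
The plan is to follow Eberle's \cite{Eberle} reflecting coupling argument, so the proof is essentially a careful invocation of the results cited (Corollary~2 and Corollary~3 there), but it is useful to record the main steps to see why they apply and where the function $f$ comes from.

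First, I would construct the \emph{reflecting coupling}: given $\mu,\nu \in \mathcal{P}_1(\R^d)$, choose an optimal coupling $(X_0,Y_0)$ for $W_1$ and drive the two copies of~\eqref{eq:lan1} by
$$dX_t = b(X_t)\,dt + dB_t, \qquad dY_t = b(Y_t)\,dt + (I - 2 e_t e_t^\top)\,dB_t,$$
where $e_t = (X_t-Y_t)/|X_t-Y_t|$ when $X_t\neq Y_t$ and $e_t=0$ once the processes meet (after coalescence one sets $Y_t=X_t$). The point is that $\tilde B_t := \int_0^t (I-2e_se_s^\top)dB_s$ is again a Brownian motion, so the marginal laws are exactly $\theta_{0,t}(\mu)$ and $\theta_{0,t}(\nu)$. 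The key feature is that $Z_t = X_t - Y_t$ then satisfies, before coalescence,
$$dZ_t = (b(X_t)-b(Y_t))\,dt + 2 e_t\, e_t^\top dB_t,$$
so $|Z_t|$ evolves as a one-dimensional diffusion with additive noise of variance $4$ and a drift bounded by $-\kappa|Z_t|$ when $|Z_t|\ge R$ (by~\eqref{weak_c}) and by $L_b |Z_t|$ otherwise (by~\eqref{lip_b}).

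Next, following Eberle's construction (see~\cite[Section~2]{Eberle}), one chooses a concave, strictly increasing function $f:\R_+\to\R_+$ with $f(0)=0$, $f'(0)=1$, and a uniform lower bound $f'(r)\ge c_*>0$, designed so that the generator of $|Z_t|$ applied to $f$ yields $\mathcal{L} f(r) \le -b_* f(r)$ for some $b_*>0$, uniformly in $r$. Concretely, $f$ is built from an exponential-type weight that kills the bad drift $L_b|Z_t|$ on $\{|Z_t|\le R\}$ by exploiting the concavity (Itô's formula produces a negative $f''$ term from the martingale quadratic variation), while on $\{|Z_t|\ge R\}$ the strict contractivity~\eqref{weak_c} supplies the decay. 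Applying Itô's formula to $e^{b_*t} f(|Z_t|)$ then shows that this process is a supermartingale (the martingale part is integrable because $f$ is bounded by the identity and $X,Y$ have finite first moments, see the moment estimate from~\eqref{weak_c}); taking expectations gives
$$\mathbb{E}\bigl[f(|X_t-Y_t|)\bigr] \le e^{-b_*(t-s)}\,\mathbb{E}\bigl[f(|X_s-Y_s|)\bigr].$$
Taking the infimum over couplings of $(\mu,\nu)$ on the right and recalling $d_f(x,y)=f(|x-y|)$, this is exactly~\eqref{E2}.

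Finally, for the invariant measure I would apply the Banach fixed point theorem. Fix $T>0$ large enough that $e^{-b_*T}<1$; by~\eqref{E2}, $\mu\mapsto\theta_{0,T}(\mu)$ is a strict contraction on the complete metric space $(\mathcal{P}_1(\R^d),W_1)$ (completeness with respect to $W_1$ defined via $d_f$ follows from equivalence~\eqref{equiv_norm_W1} with the usual $W_1^{|\cdot|}$ and completeness of the latter). This yields a unique fixed point~$\nu$, which by the flow and time-homogeneity properties is invariant for every $\theta_{s,t}$; finiteness of its first moment is checked by a standard Foster--Lyapunov estimate for $|X_t|^2$ based on~\eqref{weak_c} (taking $y=0$). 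The main technical obstacle is really encoded in the construction of $f$ itself, but since we are allowed to quote Eberle's construction, the effort reduces to verifying that hypotheses~\eqref{lip_b} and~\eqref{weak_c} place us in the setting of~\cite[Example~1]{Eberle}, which is immediate.
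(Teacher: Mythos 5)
Your proposal is correct and follows essentially the same route as the paper: the paper proves Theorem~\ref{th:41} simply by noting that \eqref{lip_b} and \eqref{weak_c} put us in the setting of \cite[Example 1]{Eberle} and then quoting \cite[Corollary 2 (13) and Corollary 3]{Eberle}, which is precisely the reflection-coupling argument with the concave distance function $f$ that you sketch. Your additional Banach fixed-point argument on $(\mathcal{P}_1(\R^d),W_1)$ for existence and uniqueness of the invariant measure is a sound (and standard) way to recover the part of the statement that the paper attributes to Eberle's Corollary 3, so nothing is missing.
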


\begin{remark}\label{rk_boundedtheta}
Using that $ \theta_{s,t}(\nu)=\nu $ we obtain that $\theta $ is $1-$bounded. Indeed, following~\eqref{def_pseudonorm}, we set $\left\Vert \mu \right\Vert _{1}=\int_{\R^d} d_f(x,0) \mu(dx)$ and for any $ \mu\in\mathcal{P}_1(\mathbb{R}^d) $, we have:
\begin{align*}
	\left\Vert \theta _{s,t}(\mu )\right\Vert _{1}&\leq \left\Vert
\theta _{s,t}(\nu )\right\Vert _{1}+ W_{1}(\theta _{s,t}(\mu
),\theta _{s,t}(\nu )) \\
&\leq \left\Vert \nu \right\Vert _{1} +W_{1}(\mu ,\nu
),
\end{align*}
by using~\eqref{E2}.
\end{remark}

\begin{remark}
From~\eqref{equiv_norm_W1}, we get easily 
\begin{equation*}
W_{1}^{|\cdot|}(\theta _{s,t}(\mu ),\theta _{s,t}(\nu ))\leq \frac{%
1}{c_*}e^{-b_*(t-s)}W_1(\mu ,\nu )
\end{equation*}
but then the multiplicative constant becomes $1/c_*$ which is in general strictly greater that one (otherwise the distances $W_1$ and $W_1^{|\cdot|}$ would be the same). This breaks the
contraction property that is crucial in the convergence analysis toward the stationary measure. In several recent papers and prominently Eberle~\cite{Eberle}, the authors succeed in constructing a distance $d$ that allows to
obtain strict contraction with respect to the Wasserstein distance associated to this distance. This is why we include in our framework of Section~\ref{Sec_Framework} a
general distance~$d$.
\end{remark}

We now construct the following Euler scheme:
\begin{align*}
Y_{s,t} &=Y+b(Y)(t-s)+B_t-B_s ,\\
\Theta _{s,t}(\mathcal{L}(Y)) &=\mathcal{L}(Y_{s,t}).
\end{align*} 

\begin{lemma}
Suppose that $b$ satisfies~\eqref{lip_b}. Then, there exists $C\in \R_+$ such that for all $s\le t\le s+1$ and $\mu \in \mathcal{P}_1(\R^d)$, 
\begin{equation}
W_{1}(\theta _{s,t}(\mu ),\Theta _{s,t}(\mu ))\leq C(1+\|\mu\|_1) (t-s)^{3/2}.
\label{3}
\end{equation}%
\end{lemma}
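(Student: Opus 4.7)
The plan is to couple the two laws via a synchronous coupling and then estimate the strong error of the one step Euler scheme on a small interval, exploiting the Brownian scaling and the Lipschitz property of~$b$.

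First, on an atomless probability space, take $Y \sim \mu$ and an $m$-dimensional Brownian motion $(B_u - B_s)_{u \ge s}$ independent of $Y$. Let $(X_u)_{u \ge s}$ be the strong solution of~\eqref{eq:lan1} started from $X_s = Y$ and driven by this Brownian motion, and set $Y_{s,t} = Y + b(Y)(t-s) + B_t - B_s$. Then $X_t \sim \theta_{s,t}(\mu)$ and $Y_{s,t} \sim \Theta_{s,t}(\mu)$, so that using the equivalence $d_f(x,y) \le |x-y|$ from~\eqref{equiv_norm_W1},
\begin{equation*}
W_1(\theta_{s,t}(\mu), \Theta_{s,t}(\mu)) \le \mathbb{E}[d_f(X_t, Y_{s,t})] \le \mathbb{E}[|X_t - Y_{s,t}|].
\end{equation*}

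Next, since $X_t - Y_{s,t} = \int_s^t (b(X_u) - b(Y))\,du$, the Lipschitz hypothesis~\eqref{lip_b} gives
\begin{equation*}
\mathbb{E}[|X_t - Y_{s,t}|] \le L_b \int_s^t \mathbb{E}[|X_u - X_s|]\,du.
\end{equation*}
Writing $X_u - X_s = \int_s^u b(X_r)\,dr + (B_u - B_s)$, using $|b(x)| \le |b(0)| + L_b |x|$, and the bound $\mathbb{E}[|B_u - B_s|] \le \sqrt{m(u-s)}$, we obtain
\begin{equation*}
\mathbb{E}[|X_u - X_s|] \le (|b(0)| + L_b \sup_{r \in [s,u]} \mathbb{E}[|X_r|])(u-s) + \sqrt{m(u-s)}.
\end{equation*}
A standard Gronwall argument applied to $r \mapsto \mathbb{E}[|X_r|]$ on the interval $[s,s+1]$ yields $\sup_{r \in [s,t]} \mathbb{E}[|X_r|] \le \tilde C (1 + \mathbb{E}[|Y|])$ for some constant~$\tilde C$ depending only on $|b(0)|$, $L_b$ and~$m$. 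Since $|y| \le d_f(y,0)/c_*$, we have $\mathbb{E}[|Y|] \le \|\mu\|_1/c_*$, hence $\sup_{r \in [s,t]} \mathbb{E}[|X_r|] \le \tilde C'(1 + \|\mu\|_1)$.

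Plugging this back and using $u - s \le 1$ so that $u-s \le \sqrt{u-s}$,
\begin{equation*}
\mathbb{E}[|X_u - X_s|] \le C(1 + \|\mu\|_1)\sqrt{u-s},
\end{equation*}
and therefore
\begin{equation*}
\mathbb{E}[|X_t - Y_{s,t}|] \le L_b C(1+\|\mu\|_1) \int_s^t \sqrt{u-s}\,du = \frac{2 L_b C}{3}(1+\|\mu\|_1)(t-s)^{3/2}.
\end{equation*}
This gives~\eqref{3} with a suitable constant. No serious obstacle is expected: the only non-routine point is the Gronwall moment estimate on $\mathbb{E}[|X_r|]$, but it is completely standard under a global Lipschitz drift.
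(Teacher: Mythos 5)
Your proof is correct and follows essentially the same route as the paper: a synchronous coupling of the flow and the one-step Euler scheme with the same Brownian motion and initial condition $Y\sim\mu$, the bound $W_1\le \mathbb{E}[|X_t-Y_{s,t}|]$ via $d_f\le|\cdot|$, and the strong estimate $\mathbb{E}[|X_u-X_s|]\le C(1+\|\mu\|_1)\sqrt{u-s}$ integrated against the Lipschitz property of $b$ to produce the $(t-s)^{3/2}$ rate. The only difference is that you spell out the Gronwall moment bound that the paper leaves implicit, which is fine.
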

Thus, from Theorem \ref{th:41} we have that Lemma~\ref{lem_coupling} holds with $\varepsilon =\frac{1}{2}.$

\begin{proof}
	Let $X_{s,t}\equiv X_{s,t}(Y)$ be the solution of%
	\begin{equation*}
	X_{s,t}=Y+\int_{s}^{t} b(X_{s,r})dr+B_{t}-B_{s}, \  t\ge s.
	\end{equation*}%
	Then, for $|t-s|\le 1$, we have $\mathbb{E}[\left\vert X_{s,t}-Y\right\vert] \leq 
	C(1+\mathbb{E}\left\vert Y\right\vert
	)(t-s)^{1/2}$, for a constant $C$ depending on $b(0)$ and  $L_b$. We now combine  the Lipschitz property and the previous inequality to get
	\begin{equation*}
\mathbb{E}\left[	\left\vert X_{s,t}-Y_{s,t}\right\vert \right]\leq \int_{s}^{t} C(1+\mathbb{E}[\left\vert Y\right\vert]
)(r-s)^{1/2} dr \le C(1+\mathbb{E}[\left\vert Y\right\vert]
)(t-s)^{3/2}.
	\end{equation*}%
\end{proof} 
With the above Lemma we can give the error estimate for the approximation of the invariant measure to the stochastic equation \eqref{eq:lan1}.

\begin{theorem}
Suppose that~\eqref{lip_b} and~\eqref{weak_c} hold. Let $s\ge 0$ and consider the particular
time grid $t_{k}=s+\sum_{i=1}^{k}\frac{1}{i+1}$ for $k\in \N$. Then, there exists a constant $C\in \R_+$ such that we have for all $n\ge 1$,
\begin{equation}
W_{1}^{|\cdot|}(\Theta _{s,t_{n}}^{\pi }(\nu ),\mu _{\ast })\leq Cn^{-b_{\ast }\wedge 1/2}(1+\ln(1+n)^{\mathbf{1}_{b_*=1/2}})
\label{A6}
\end{equation}%
where $b_{\ast }$ is the constant in (\ref{E2}).
\end{theorem}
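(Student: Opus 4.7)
The plan is to apply Theorem~\ref{FINAL} in the space $(\R^d, d_f)$ with $p=1$ and $\varepsilon=1/2$, taking $\theta$ as the target flow and $\Theta$ as the one-step Euler approximation, and then to transfer the resulting bound to the Euclidean Wasserstein distance via the equivalence~\eqref{equiv_norm_W1}.

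First I would establish that $\theta$ is $(1,b_*,1/2)$-self-coupled. Starting from Eberle's exponential contraction~\eqref{E2} and the elementary inequality $e^{-x}\le 1-x+x^2/2$ valid for $x\ge 0$, one writes for $0\le t-s\le h$
\begin{equation*}
W_1(\theta_{s,t}(\mu^1), \theta_{s,t}(\mu^2)) \le (1-b_*(t-s))W_1(\mu^1,\mu^2) + \frac{b_*^2}{2}(t-s)^2 W_1(\mu^1,\mu^2),
\end{equation*}
and then bounds $W_1(\mu^1,\mu^2)\le \Gamma_1(\mu^1,\mu^2)$ and $(t-s)^2 \le h^{1/2}(t-s)^{3/2}$ to obtain the self-coupling property~\eqref{AN1} with exponent $\varepsilon=1/2$, crucially without degrading the contraction constant $b_*$. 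Remark~\ref{rk_boundedtheta} provides the $1$-boundedness of~$\theta$. Lemma~\ref{lem_coupling} applied with $\Theta^1=\theta$, $\Theta^2=\Theta$ and the one-step Euler error estimate~\eqref{3} then yields that $(\theta,\Theta)$ are $(1,b_*,1/2)$-coupled and that $\Theta$ is $1$-bounded.

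Next, I would note that $\theta$ is time homogeneous (the coefficients of~\eqref{eq:lan1} do not depend on time) and $W_1$-continuous (a direct consequence of~\eqref{E2}), so that Proposition~\ref{prop:29} and Theorem~\ref{FINAL} both apply. The former yields existence and uniqueness of the invariant measure~$\mu_*$, while the latter delivers, for every starting law $\mu\in\mathcal{P}_1(\R^d)$,
\begin{equation*}
W_1(\Theta_{s,t_n}^\pi(\mu), \mu_*) \le A\, n^{-b_*\wedge 1/2}(1+\ln(1+n))^{\mathbf{1}_{b_*=1/2}}.
\end{equation*}
The claim then follows from the left inequality in~\eqref{equiv_norm_W1}, namely $W_1^{|\cdot|}\le c_*^{-1}W_1$, with $C=A/c_*$.

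The main obstacle is the passage from Eberle's exponential contraction to the additive coupling form~\eqref{AN1} without losing anything in the constant $b_*$; the Taylor bound $e^{-x}\le 1-x+x^2/2$ does exactly this, with the quadratic remainder naturally absorbed into the error term of exponent $1+\varepsilon=3/2$ that matches the Euler local error~\eqref{3}. A minor technical point is that the particular grid $t_k=s+\sum_{i=1}^k 1/(i+1)$ in the statement corresponds to $h=1$, whereas Theorem~\ref{FINAL} is stated for $h\in(0,1)$; this is harmless and can be handled either by choosing $h<1$ arbitrarily close to $1$ (which only affects the constant $C$, since both grids produce $\gamma_k\sim 1/k$) or by a direct verification of condition~\eqref{AN14bis} for this specific grid.
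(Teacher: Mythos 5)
Your proposal is correct and follows essentially the same route as the paper's own proof: the Taylor bound on $e^{-b_*(t-s)}$ together with $W_1\le\Gamma_1$ gives the $(1,b_*,1/2)$-self-coupling of~$\theta$ from~\eqref{E2}, the $1$-boundedness comes from Remark~\ref{rk_boundedtheta}, Lemma~\ref{lem_coupling} with the one-step Euler estimate~\eqref{3} gives the coupling of $\theta$ and $\Theta$, and Theorem~\ref{FINAL} plus the norm equivalence~\eqref{equiv_norm_W1} yield~\eqref{A6}. Your extra remarks (absorbing $(t-s)^2$ into $(t-s)^{3/2}$, and the $h$ versus the grid $\gamma_k=1/(k+1)$ issue) are only minor bookkeeping points that the paper leaves implicit.
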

\begin{proof}
	Using Lemma~\ref{lem_coupling} we get that $\Theta $ is $%
(1,b_{\ast },\frac{1}{2})-$ coupled with $\theta .$ We already know that $%
\theta $ is $1-$bounded by Remark~\ref{rk_boundedtheta}. It is also $(1,b_{\ast },\frac{1}{2})$-self coupled from~\eqref{E2} since we have $W_{1}(\theta _{s,t}(\mu ),\theta _{s,t}(\nu ))\leq W_{1}(\mu
,\nu )(1-b_{\ast }(t-s)+\frac{b_{\ast }^2}{2}(t-s)^2)$ and $W_1(\mu,\nu)\le \Gamma_1(\mu,\nu)$.
Then, by Theorem~\ref{FINAL} and~\eqref{equiv_norm_W1} we get (\ref{A6}). $\square $
\end{proof}

\begin{remark}
This analysis of the convergence of the Euler scheme has already been done in a more precise setting by Pag\`es and Panloup~\cite{PaPa}. Here,  we just want to
illustrate how this enters in our abstract framework.
\end{remark}

We give now a second example which is much more involved. Following Schuh~\cite{Schuh},
we consider the space and time Langevin equation of McKean-Vlasov type 
\begin{align}
dX_{t} &=Y_{t}dt  \label{langevin} \\
dY_{t} &=(ub^{E}(X_{t})+u\int_{\R^{d}}b^{I}(X_{t},z)\mu _{t}^{X}(dz)-\gamma
Y_{t})dt+\sqrt{2\gamma u} d B_{t}, \notag
\end{align}
where $\mu _{t}^{X}$ is the law of $X_{t},B$ is a $d$ dimensional Brownian
motion, $b^{E}$ describes some external forces and $b^{I}$ represent some
interacting forces. The parameters $u$ and $\gamma$ are positive real numbers associated to the physical model presented in \cite{Schuh}.

We assume that both $b^{E}$ and $b^{I}$ are Lipschitz continuous and
moreover, we assume the following contraction property: There exists a
positive definite matrix $K$ with smaller eigenvalue $\kappa >0$ and a
Lipschitz continuous function $g:\R^{d}\rightarrow \R^{d}$ with Lipschitz
constant $L_{g}$ such that $b^{E}(x)=-Kx+g(x)$. Moreover, the function $g$
verifies for some $R>0,$%
\begin{equation*}
\left\langle x_1-x_2,g(x_1)-g(x_2)\right\rangle \leq 0\quad \text{if}\quad \left\vert
x_1-x_2\right\vert \geq R.
\end{equation*}
The above assumption implies in particular that 
\begin{equation*}
\left\langle x_1-x_2,b^{E}(x_1)-b^{E}(x_2)\right\rangle \leq - \kappa \left\vert
x_1-x_2\right\vert ^{2}\quad \text{if}\quad \left\vert x_1-x_2\right\vert \geq R.
\end{equation*}%

Notice that the contraction property is not a classical one on $(X_{t},Y_{t})$, as the drift coefficient $ b^E $ depends only the first component and it appears only on the second component. 
In this sense, we are dealing with a degenerate problem, and then the use of the
distance $d_{f}$ from the previous example is no longer sufficient in order to
solve that problem. Remarkably, Schuh constructs a new metric $d$ on $%
\R^{d}\times \R^{d}$ which is equivalent with the Euclidean distance (see~\cite[Lemma 4.6]{Schuh}) and for which the following contraction result is obtained, see~\cite[Theorem 12]{Schuh}.
\begin{theorem}\label{thm_schuh}
	Suppose that the above assumptions hold with $2L_{g}^{2}u\gamma ^{-2}<\kappa$. Let $\mu, \mu' \in \mathcal{P}_1(\R^d \times \R^d)$,  $%
	(X_{t},Y_{t})$ and $(X_{t}^{\prime },Y_{t}^{\prime })$ be two solutions
	starting respectively from $\mu $ and $\mu ^{\prime }$. There exists a constant $b_{\ast }>0$ such that 
	\begin{equation}
	W_{1}(\mathcal{L}(X_{t},Y_{t}),\mathcal{L}(X_{t}^{\prime
	},Y_{t}^{\prime }))\leq e^{-b_{\ast }t}W_{1}(\mu ,\mu ^{\prime }).
	\label{langevin2}
	\end{equation}%
	Moreover, there is a unique invariant measure for the corresponding semigroup $\theta $ and
	in particular, $\theta $ is $1-$bounded.
	\end{theorem}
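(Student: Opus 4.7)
The approach is to cast Schuh's theorem into our abstract framework. The contraction estimate~\eqref{langevin2} I would take directly from \cite[Theorem 12]{Schuh}, whose proof rests on two ingredients: the construction of a metric $d$ on $\R^d\times \R^d$ equivalent to the Euclidean distance (see \cite[Lemma 4.6]{Schuh}), and a carefully tuned reflection/synchronous coupling of two solutions of~\eqref{langevin}. The spectral-gap assumption $2L_g^2 u\gamma^{-2}<\kappa$ is precisely what is needed for the coupled distance to decay at a uniform exponential rate $b_*>0$ in expectation. I treat this construction as a black box.

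Once~\eqref{langevin2} is available, I would deduce existence and uniqueness of the invariant measure $\nu$ by applying Proposition~\ref{prop:29}. The required hypotheses are easy to check: $\theta$ is time-homogeneous since the coefficients of~\eqref{langevin} do not depend on time; $\mu\mapsto \theta_{s,t}(\mu)$ is continuous in $W_1$ by standard stability of McKean--Vlasov SDEs with globally Lipschitz coefficients; and~\eqref{langevin2} combined with $W_1\le \Gamma_1$ and the Taylor bound $e^{-b_*(t-s)}\le 1-b_*(t-s)+\tfrac{b_*^2}{2}(t-s)^2$ yields the $(1,b_*,1)$ self-coupling property~\eqref{AN1} with $C_*=b_*^2/2$. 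For the $1$-boundedness hypothesis of Proposition~\ref{prop:29}, I would rely on a Foster--Lyapunov moment bound $\sup_{t\ge 0}\mathbb{E}[|X_t|^2+|Y_t|^2]<\infty$, obtained by Ito's formula applied to $V(x,y)=|x|^2+\alpha\langle x,y\rangle+\beta|y|^2$ with $\alpha,\beta$ tuned to exploit the spectral gap (this is essentially already carried out by Schuh); this also gives $\|\nu\|_1<\infty$.

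The $1$-boundedness of $\theta$ asserted in the theorem then follows immediately, as in Remark~\ref{rk_boundedtheta}, from $\theta_{s,t}(\nu)=\nu$, the triangle inequality and~\eqref{langevin2}:
\begin{equation*}
\|\theta_{s,t}(\mu)\|_1\le \|\theta_{s,t}(\nu)\|_1 + W_1(\theta_{s,t}(\mu),\theta_{s,t}(\nu)) = \|\nu\|_1 + W_1(\theta_{s,t}(\mu),\theta_{s,t}(\nu))\le \|\nu\|_1+W_1(\mu,\nu),
\end{equation*}
which is finite and uniform in $s\le t$.

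The main obstacle in the whole argument is Schuh's construction of the metric $d$. The Langevin system~\eqref{langevin} is hypoelliptic: the only stabilising term acting directly on $Y$ is the damping $-\gamma Y$, while the restoring force $-Kx$ reaches $Y$ only indirectly. A naive Euclidean coupling does not contract, and Schuh's modified metric, incorporating an appropriately weighted cross-term between position and velocity differences, is precisely what turns the coupled process into a supermartingale with positive exponential rate. From our side, only routine bookkeeping to verify the framework's hypotheses remains.
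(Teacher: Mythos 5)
Your proposal is correct and takes essentially the same route as the paper: the contraction \eqref{langevin2} is taken from Schuh's Theorem 12 (with the construction of the equivalent metric treated as a black box), and the existence/uniqueness of the invariant measure together with the $1$-boundedness of $\theta$ are obtained exactly as in the paper from Proposition~\ref{prop:29} and Remark~\ref{rk_boundedtheta}, using the standard Taylor-expansion trick to turn \eqref{langevin2} into the $(1,b_*,1)$ self-coupling property. Your only addition is the explicit Lyapunov moment bound supplying the $1$-boundedness hypothesis of Proposition~\ref{prop:29}, a detail the paper leaves implicit by relying on Schuh's results.
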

	\noindent The last statement is a straightforward consequence of Proposition~\ref{prop:29} and Remark~\ref{rk_boundedtheta}.
	
The construction
of the metric $d$ is rather heavy, but, as in the previous simpler
example, we are not concerned with the specific form of the metric. Using
Schuh's contraction results for the semigroup is enough to  directly obtain the needed
conditions for the Euler scheme.

\begin{remark}
Since $d$ is equivalent with the Euclidean distance, the above inequality
also implies, for some $M> 1$,%
\begin{equation*}
W_{1}^{|\cdot|}(\mathcal{L}(X_{t},Y_{t}),\mathcal{L}(X_{t}^{\prime },Y_{t}^{\prime
})) \leq Me^{-b_*t}W_{1}^{|\cdot|}(\mu ,\mu ^{\prime }),
\end{equation*}%
where $W_{1}^{|\cdot|}$ is the classical $1$-Wasserstein distance associated to the $1$-norm on $\R^d \times \R^d$. However, this is no longer a strict contraction for $|\cdot|$ since  $M>1$. 
\end{remark}

We define  $\theta _{s,t}(\mu )$ as the law of the solution of~\eqref{langevin} which
starts from $\mu $ at time~$s$. Let $\Theta _{s,t}(\mu $) be the law of $(\overline{X}_{s,t},%
\overline{Y}_{s,t})$, the one
step Euler scheme defined by 
\begin{align*}\overline{X}_{s,t} &=\overline{Y}(t-s) \\
	\overline{Y}_{s,t} &=\left(ub^{E}(\overline{X})+u\int_{\R^{d}}b^{I}(\overline{X}%
	,z)\overline{\mu }^{\overline{X}}(dz)-\gamma \overline{Y} \right)(t-s)+\sqrt{%
		2\gamma u}(B_{t}-B_{s})
\end{align*}
where $(\overline{X},\overline{Y})$ has law $\mu $ and $\mu ^{\overline{X}}(dz)$ is the law of $\overline{X}$.

\begin{theorem}
Let the assumptions of Theorem~\ref{thm_schuh} be in force.  Let $s\ge 0$ and $t_n=s+\sum_{i=1}^{n}\frac{1}{i+1}$ for $n\in \N$.  Then, there exists $C\in \R_+$, such that for all $n\ge 1$
\begin{equation}
W_{1}^{|\cdot|}(\Theta _{s,t_{n}}^{\pi }(\mu ),\nu )\leq Cn^{-\frac{1}{2}\wedge b_{\ast }}(1+\ln
(n+1)^{1_{\{b_{\ast }=\frac{1}{2}\}}}  ).\label{langevin3}
\end{equation}
\end{theorem}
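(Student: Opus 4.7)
The plan is to parallel the treatment of the simpler Langevin equation~\eqref{eq:lan1} given in the preceding subsection, applying Lemma~\ref{lem_coupling} to obtain the coupling with $\theta$, and then Theorem~\ref{FINAL}. From Theorem~\ref{thm_schuh} and Remark~\ref{rk_boundedtheta}, $\theta$ is $1$-bounded with respect to Schuh's metric $d$. The contraction~\eqref{langevin2}, combined with the elementary estimate $e^{-b_*(t-s)}\le 1-b_*(t-s)+\tfrac{b_*^2}{2}(t-s)^2$ and the inequality $W_1(\mu,\mu')\le \Gamma_1(\mu,\mu')$, shows that $\theta$ is $(1,b_*,1/2)$-self-coupled in the sense of Definition~\ref{coupling}.

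The second step is the one-step error estimate, analogous to~\eqref{3}: for every $\mu \in \mathcal{P}_1(\R^d\times\R^d)$ and $0\le t-s\le 1$,
\begin{equation*}
W_1\bigl(\theta_{s,t}(\mu),\Theta_{s,t}(\mu)\bigr)\le C(1+\|\mu\|_1)(t-s)^{3/2}.
\end{equation*}
I would build this estimate on a common probability space by running both dynamics from the same initial pair $(X_s,Y_s)=(\overline X,\overline Y)\sim\mu$ and driving them by the same Brownian motion~$B$. Standard a priori estimates for~\eqref{langevin} give $\mathbb{E}|X_r-X_s|+\mathbb{E}|Y_r-Y_s|\le C(1+\|\mu\|_1)(r-s)^{1/2}$ for $r-s\le 1$. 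Then the Lipschitz continuity of $b^E$ and $b^I$ (Lipschitz in the measure argument in $W_1^{|\cdot|}$ because $b^I$ is Lipschitz in $z$), together with
\begin{equation*}
X_t-\overline X_{s,t}=\int_s^t(Y_r-Y_s)\,dr,
\end{equation*}
and an analogous expression for $Y_t-\overline Y_{s,t}$, produce an $O((t-s)^{3/2})$ bound in Euclidean distance. Since $d$ and $|\cdot|$ are equivalent by~\cite[Lemma 4.6]{Schuh}, the same order holds for $W_1$ based on $d$.

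With these two ingredients, Lemma~\ref{lem_coupling} applied with $p=1$ and $\varepsilon=1/2$ ensures that $(\theta,\Theta)$ is $(1,b_*,1/2)$-coupled and that $\Theta$ is $1$-bounded. Theorem~\ref{FINAL} then yields, for the time grid $t_n=s+\sum_{i=1}^n\frac{1}{i+1}$,
\begin{equation*}
W_1\bigl(\Theta^{\pi}_{s,t_n}(\mu),\nu\bigr)\le C\,n^{-b_*\wedge 1/2}\bigl(1+\ln(1+n)^{\mathbf{1}_{\{b_*=1/2\}}}\bigr),
\end{equation*}
and the equivalence $W_1^{|\cdot|}\le M\,W_1$ (see the remark following Theorem~\ref{thm_schuh}) delivers~\eqref{langevin3}.

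The main obstacle is the one-step error estimate: the McKean--Vlasov drift $u\int b^I(X_t,z)\mu_t^X(dz)$ requires controlling the displacement of the marginal law $\mu_r^X$ from $\mu^X$ on the interval $[s,s+h]$, which is done via the moment bound on $\mathbb{E}|X_r-X_s|$ obtained on the same probability space. Crucially, Schuh's metric $d$ is invoked only as a black box producing the flow properties of $\theta$; all one-step Euler bounds are carried out in the Euclidean metric and transferred to $d$ by equivalence, so that the sharp contraction inside Eberle--Schuh's coupling construction never needs to be reopened.
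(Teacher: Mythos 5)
Your proposal is correct and follows essentially the same route as the paper: prove the one-step Euler error bound $W_1(\theta_{s,t}(\mu),\Theta_{s,t}(\mu))\le C(1+\|\mu\|_1)(t-s)^{3/2}$ in the Euclidean norm (moment bounds on the increments, Lipschitz handling of $b^E$, $b^I$ and of the McKean--Vlasov term via the marginal displacement, same Brownian motion so the noise cancels), transfer it to Schuh's metric by equivalence, combine it with the contraction~\eqref{langevin2} (triangle inequality, i.e.\ Lemma~\ref{lem_coupling}) to get the $(1,b_*,1/2)$-coupling, and conclude with Theorem~\ref{FINAL} plus the norm equivalence to pass to $W_1^{|\cdot|}$. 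The only cosmetic difference is that the paper applies the triangle inequality directly rather than citing Lemma~\ref{lem_coupling}, which amounts to the same argument.
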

\begin{proof}
Let $(X_{t},Y_{t})$ be the solution starting from $(\overline{%
	X},\overline{Y})$ such that $\mathbb{E}(\vert \overline{X}\vert
	+\vert \overline{Y}\vert )<\infty$. We claim that there exists a constant $C\in \R_+$ such that for $s\le t \le s+1$,
	\begin{equation}
	\mathbb{E}[\vert X_{s,t}-\overline{X}_{s,t}\vert] +\mathbb{E}[\vert Y_{s,t}-%
	\overline{Y}_{s,t}\vert] \leq C \left(1+\mathbb{E}[\vert \overline{X}\vert
	+\vert \overline{Y}\vert] \right)(t-s)^{3/2}.  \label{langevin1}
	\end{equation}
	We first prove that 
	\begin{equation}\label{bound_mom1}\mathbb{E}[\vert X_{s,t}\vert] +\mathbb{E}[\vert Y_{s,t}\vert] \leq C \left(1+\mathbb{E}[\vert \overline{X}\vert
	+\vert \overline{Y}\vert] \right).
	\end{equation}
	To get this, we use the triangle inequality in~\eqref{langevin}, observe that $\sqrt{2\gamma u} \mathbb{E}[|B_t-B_s|]$ is bounded for $t\le s+1$, and conclude with Gronwall lemma. 

	Then, we use the Lipschitz property (and thus the sublinear growth) of $b^{E}$ and $b^{I}$. Using~\eqref{bound_mom1}, this gives for $t\le s+1$, 
	\begin{align*}
		&\mathbb{E}[|X_{s,t}-\overline{X}|]\le C \left(1+\mathbb{E}[\vert \overline{X}\vert
		+\vert \overline{Y}\vert ]\right) (t-s),\\
&\mathbb{E}[|Y_{s,t}-\overline{Y}|]\le C \left(1+\mathbb{E}[\vert \overline{X}\vert
+\vert \overline{Y}\vert] \right) (t-s) + C(t-s)^{1/2}.
	\end{align*}
	With these estimates, we can finally compare the solution $ X $ with the Euler scheme $ \bar{X} $ and get 
$$	\mathbb{E}[\vert X_{s,t}-\overline{X}_{s,t}\vert] \le \int_s^t \mathbb{E}[\vert Y_{s,r}-\overline{Y}\vert ]dr \le C \left(1+\mathbb{E}[\vert \overline{X}\vert
+\vert \overline{Y}\vert ]\right) (t-s)^2 + C(t-s)^{3/2},$$
and, using the Lipschitz property of $b^E$ and $b^I$,
$$	\mathbb{E}[\vert Y_{s,t}-\overline{Y}_{s,t}\vert ]\le C \left(1+\mathbb{E}[\vert \overline{X}\vert
+\vert \overline{Y}\vert] \right) (t-s)^2 + C(t-s)^{3/2}.$$
This gives directly~\eqref{langevin1} since $t-s\le 1$. 	

From the equivalence between the distance $ d $ and the Euclidean norm, we also have \begin{equation*}
	\mathbb{E}[d(X_{s,t},\overline{X}_{s,t})] +\mathbb{E}[d(Y_{s,t},
	\overline{Y}_{s,t})] \leq C \left(1+\mathbb{E}[\vert \overline{X}\vert
	+\vert \overline{Y}\vert] \right)(t-s)^{3/2},
	\end{equation*}
and therefore for any $\mu \in \mathcal{P}_1(\R^d \times \R^d)$,
	\begin{equation*}
	W_{1}(\Theta _{s,t}(\mu ),\theta _{s,t}(\mu ))\leq C(1+\|\mu\|_1) (t-s)^{3/2}.
	\end{equation*}%
	Moreover, using the triangle inequality and (\ref{langevin2}), we get 
	\begin{equation*}
	W_{1}(\Theta _{s,t}(\nu ),\theta _{s,t}(\mu ))\leq
	C(1+\|\nu\|_1)(t-s)^{3/2}+e^{-b_*t}W_{\rho ,1}(\mu ,\nu ).
	\end{equation*}%
	So, $\theta $ and $\Theta $ are $(1,b_*,1/2)$-coupled and  (\ref{langevin3}) is then
	a consequence of Theorem~\ref{FINAL}.
\end{proof}

\section{Examples with global contraction conditions }
\label{sec:ex}

In this section, we propose to investigate different examples of stochastic systems that fall into the framework presented in Section~\ref{Sec_Framework}. 
In contrast with Section~\ref{sec_Eberle} where the coupling property between the approximation scheme and the flow is deduced from the self-coupling property of the flow, we instead present in these section cases where the coupling property is deduced from the self-coupling property of the approximation scheme. 
 When applying it we will need to prove the main  coupling properties as described in Definition~\ref{coupling}, as well as the $p$-boundedness stated in Definition~\ref{def_pbounded}. 
In most examples, the main argument is based on an application of Theorem \ref{th:app}. Therefore the proofs consist in the verification of \eqref{AN2BIS} and \eqref{AN3}. The first example on reflected SDEs considers a rather simple example
with Markovian dynamics without law dependence on the coefficients. Then, we consider a
neuronal model, which is an example of McKean-Vlasov SDE. 
To show that the general framework in Section \ref{Sec_Framework} can be applied in Wasserstein spaces with different value of $ p $, we will obtain conditions which imply the convergence of the approximation schemes to the invariant measure in $W_2$ and $W_1$. This will lead to different contraction
hypothesis. In these two examples, we can apply directly the method
presented in Subsections~\ref{subsec_Probrep} and \ref{sec:51}. Namely, we construct a
coupling between the continuous stochastic process and its approximation
scheme. In contrast, for the Boltzmann equation approximation in $ W_2 $, we are not
able to construct a coupling between the Boltzmann process and its one-step
approximation scheme. To get around this difficulty, we will couple an
approximation of the Boltzmann process (with say parameter $ m $) with the one-step approximation
scheme and then take limits as $ m\to\infty $ in all the estimates. For the Boltzmann equation in $W_1$, we can skip this step relying on the sewing lemma. 
Note that in all the examples, we have not strived to find the best conditions as our goal is to show in a compact form that the main framework applies in each situation. Considering weaker conditions or more general equations  in each example may be possible, but outside the scope of the paper. For readability, we have selected some proofs to keep them in the main text while the other ones are available in Appendix~\ref{App_additional}.

Throughout this section, we assume that the time grid is taken so that
$t_{n}=t_{0}+\sum_{i=1}^{n}\frac{1}{i+1}.$ 
The flows are always time homogeneous and continuous.

\subsection{A stochastic equation with reflection in $ W_2 $}

\label{sec:ref}

We consider the following one dimensional stochastic equation with
reflection in $ \mathbb{R}_+ $ for the initial condition $X^1\geq 0 $, $t-s\leq 1 $ on a filtered probability space $%
(\Omega,\mathcal{F},\mathbb{P}) $ supporting a Wiener process $W $: 
\begin{align}  \label{eq:ref1}
\mathcal{X}^1_{s,t}(X^1)=&X^1+\int_s^tb( \mathcal{X}^1_{s,u}(X^1))du+
\int_s^t\sigma( \mathcal{X}^1_{s,u}(X^1))dW_u+L^1_{s,t}\geq 0 ,\\
L^1_{s,t}=&\int_0^t 1_{\mathcal{X}^1_{s,u}(X^1)=0}dL^1_{s,u}.  \label{eq:ref2}
\end{align}
Here, a solution means a couple $(\mathcal{X}^1_{s,\cdot},L^1_{s,\cdot}) $
which satisfies the above conditions, where $L^1_{s,\cdot}$ is a
nondecreasing process which only increases when $\mathcal{X}^1_{s,u}(X^1)=0 $
with initial condition $L^1_{s,s}=0$. We assume that $b,\sigma :\mathbb{R}\to%
\mathbb{R}$ are Lipschitz functions. In fact, we assume that there exists
constants $C_b,C_\sigma>0 $ such that 
\begin{align}  \label{LIP_example1}
|b(y)-b(x)|^2\leq C_b|y-x|^2,\quad |\sigma(y)-\sigma(x)|^2\leq
C_\sigma|y-x|^2.
\end{align}
Therefore, there exists a unique solution to the above equation, see for
example \cite[Theorem 3.1]{LiSz}, and this solution has finite moments. In
particular, we have ${\mathbb{E}}[ \mathcal{X}^1_{s,t}(X^1)^2]\leq C(1+{%
\mathbb{E}}[(X^1)^2]) $. We also define for $X^{2}\geq 0$, the one step
approximation process 
\begin{equation} \label{eq:refX2}
\mathcal{X}_{s,t}^{2}(X^{2})=X^{2}+b(X^{2})(t-s)+\sigma
(X^{2})(W_{t}-W_{s})+L_{s,t}^{2}\geq 0,
\end{equation}
with $L^{2}$ a nondecreasing process such that $L_{s,t}^{2}= \int_{s}^{t}1_{%
\mathcal{X}_{s,u}^{2}(X^{2})=0}dL_{s,u}^{2}$ with initial condition $%
L^2_{s,s}=0$. We similarly have ${\mathbb{E}}[ \mathcal{X}%
^2_{s,t}(X^2)^2]\leq C(1+{\mathbb{E}}[(X^2)^2]) $.

Furthermore, we assume the following contraction property: for some
positive constant $\bar{b} >C_\sigma$ 
\begin{equation}
(x-y)(b(x)-b(y))\leq -\bar{b}\left\vert x-y\right\vert ^{2}.  \label{1d}
\end{equation}%
We intend to prove that the probabilistic representations implied by $ \mathcal{X}^i $, $ i=1,2 $ satisfy the   conditions in Theorem \ref{stochastic}. In fact, we define for $%
\mu=\mathcal{L}(X)\in \mathcal{P}_2(\mathbb{R}) $ 
\begin{equation*}
\theta _{s,t}(\mu )=\mathcal{L}(\mathcal{X}^1_{s,t}(X)) ,\quad \Theta
_{s,t}(\mu )= \mathcal{L}(\mathcal{X}_{s,t}^{2}(X)).
\end{equation*}

We fix now the time grid $\pi=\{t_n,n \in {\mathbb{N}}\}$, with $t_{n}=s+\sum_{i=1}^{n}\frac{1}{1+i}$ 
and define $\Theta _{s,t_{n}}^{\pi }(\mu )$ by~\eqref{def_thetapi}.
We also define the Euler scheme $X_{0}^{\pi }=X$ and $X_{k+1}^{\pi }=%
\mathcal{X}_{t_{k},t_{k+1}}^{2}(X_{k}).$ Then, if the law of $X_{0}$ is $\mu
,$ then we have $\mathcal{L}(X_{n}^{\pi })=\Theta _{s,t_{n}}^{\pi }(\mu ).$

\begin{theorem}
\label{thm_Reflected2} Suppose that~\eqref{LIP_example1} and (\ref{1d}) hold
with $2\bar{b} >C_ \sigma$. Let $b_{\ast }\in(0,2\bar{b}-C_ \sigma)$. Then, $%
\theta $ and $\Theta $ satisfy the $2$-Foster-Lyapunov condition, are $%
(2,b_{\ast },1)$-coupled and $\theta $ is self-coupled with the same
parameters.

Furthermore, the Markov process $(\mathcal{X}_{s,t}^{1}(X))_{t\geq s}$
admits a unique invariant measure $\nu\in \mathcal{P}_2(\mathbb{R}_+)$, and there exists $C\in {\mathbb{R}}_+$ such that for
every $\mu \in {\mathcal{P}}_2({\mathbb{R}}_+)$, $n\ge 1$ and $t>s$, 
\begin{align*}
W_{2}^{2}(\Theta _{s,t_{n}}^{\pi }(\mu ),\nu )\leq& C(\left\Vert \mu
\right\Vert _{2}^{2}+1) \frac{1}{n^{b_*\wedge 1}}(1+\ln(n+1)^{1_{b_*=1}}),\\W_{2}^{2}(\theta
_{s,t}(\mu ),\nu )\leq& C(\left\Vert \mu \right\Vert _{2}^{2}+1)\times
(1+(t-s)^{1_{b_*=1}})e^{-(b_*\wedge 1)(t-s)}.
\end{align*}
\end{theorem}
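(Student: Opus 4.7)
My plan is to invoke Theorem~\ref{th:app}, which reduces the proof to constructing joint probabilistic representations (Definition~\ref{def:214}) for the pairs $(\theta,\theta)$ and $(\Theta,\theta)$ satisfying~\eqref{AN3} with parameters $(2,b_*,1)$, together with~\eqref{AN2BIS} for both families. Time-homogeneity and $W_2$-continuity of $\theta$ are standard consequences of well-posedness and $L^2$-stability of~\eqref{eq:ref1}--\eqref{eq:ref2} under~\eqref{LIP_example1}. In each probabilistic representation below I drive the two processes by the same Brownian motion~$W$ and couple the initial conditions $(X^1,X^2)$ so that $\mathbb{E}[(X^1-X^2)^2]=W_2^2(\mu^1,\mu^2)$.

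For the self-coupling of $\theta$, set $Z_u := \mathcal{X}^1_{s,u}(X^1)-\mathcal{X}^1_{s,u}(X^2)$ and apply It\^o's formula to $Z_u^2$. Three contributions appear: a drift term bounded by $-2\bar{b}\,\mathbb{E}[Z_u^2]$ via~\eqref{1d}; a diffusion term bounded by $C_\sigma\mathbb{E}[Z_u^2]$ via~\eqref{LIP_example1}; and a reflection contribution from the two local times, which is pathwise non-positive because each local time only grows when the associated copy hits $0$, at which point $Z_u$ has the favourable sign. This yields $\tfrac{d}{du}\mathbb{E}[Z_u^2]\le -(2\bar{b}-C_\sigma)\mathbb{E}[Z_u^2]$, so $\mathbb{E}[Z_t^2]\le e^{-(2\bar{b}-C_\sigma)(t-s)}\mathbb{E}[(X^1-X^2)^2]$. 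The elementary inequality $e^{-\alpha h}\le 1-b_*h+Ch^2$, valid for any $b_*<\alpha$ and small $h$, then delivers~\eqref{AN3} with $\varepsilon=1$ for any $b_*<2\bar{b}-C_\sigma$.

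The coupling between $\Theta$ and $\theta$ follows the same template with $Z_u:=\mathcal{X}^1_{s,u}(X^1)-\mathcal{X}^2_{s,u}(X^2)$. Since the Euler coefficients are frozen at $X^2$, I split
\[b(\mathcal{X}^1_{s,u}(X^1))-b(X^2)=\bigl[b(\mathcal{X}^1_{s,u}(X^1))-b(\mathcal{X}^2_{s,u}(X^2))\bigr]+\bigl[b(\mathcal{X}^2_{s,u}(X^2))-b(X^2)\bigr],\]
apply~\eqref{1d} to the first bracket and Young's inequality with~\eqref{LIP_example1} to the second, absorbing $\delta\mathbb{E}[Z_u^2]$ and leaving an error in $\mathbb{E}[|\mathcal{X}^2_{s,u}(X^2)-X^2|^2]$. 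The diffusion difference is treated analogously, and the reflection terms remain non-positive by the same sign argument. A direct computation from~\eqref{eq:refX2}, the linear growth of $b,\sigma$, and a standard Skorokhod bound on $L^2$ gives $\mathbb{E}[|\mathcal{X}^2_{s,u}(X^2)-X^2|^2]\le C(1+\|\mu^2\|_2^2)(u-s)$. Gronwall then produces a contraction factor $1-b_*(t-s)$ for any $b_*<2\bar{b}-C_\sigma$ and a residual of order $(t-s)^2\,\Gamma_2(\mu^1,\mu^2)$, establishing~\eqref{AN3}.

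The Foster--Lyapunov condition for $\theta$ reduces, via It\^o applied to $(\mathcal{X}^1_{s,u})^2$ and $\mathcal{X}^1\,dL^1=0$, to the scalar inequality $2xb(x)+\sigma(x)^2\le -(2\bar{b}-C_\sigma-\delta)x^2+C$, which follows from~\eqref{1d} at $y=0$ and~\eqref{LIP_example1}. The analogue for $\Theta$ is more delicate, as the coefficients are frozen at the initial point: starting from
\[\mathbb{E}[(\mathcal{X}^2_{s,t}(x))^2]=x^2+2b(x)\!\int_s^t\!\mathbb{E}[\mathcal{X}^2_{s,u}(x)]\,du+\sigma(x)^2(t-s),\]
I would control $\mathbb{E}[\mathcal{X}^2_{s,u}(x)]$ via the Skorokhod representation $L^2_{s,u}=\sup_{s\le v\le u}(-x-b(x)(v-s)-\sigma(x)(W_v-W_s))_+$ and treat separately the cases $b(x)\le 0$ (where the drift contracts) and $b(x)>0$; in the latter, \eqref{1d} at $y=0$ forces $x\in[0,|b(0)|/\bar{b}]$, so the bound is trivial on a compact set. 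This verification of~\eqref{AN2BIS} for the reflected Euler scheme is the main technical obstacle. Once it is in place, Theorem~\ref{th:app} delivers existence and uniqueness of $\nu\in\mathcal{P}_2(\mathbb{R}_+)$ together with the two stated Wasserstein estimates.
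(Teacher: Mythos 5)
Your proposal is correct and, for the core of the theorem, follows the paper's own route: the same synchronous coupling (shared Brownian motion, $W_2$-optimal initial coupling), the same sign argument showing that the local-time contributions are non-positive, and the same insertion of $b(\mathcal{X}^2_{s,u}(X^2))$ with Young's inequality and the one-step bound $\mathbb{E}[|\mathcal{X}^2_{s,u}(X^2)-X^2|^2]\le C(1+\|\mu^2\|_2^2)(u-s)$ to obtain the $(2,b_*,1)$-coupling; your Gronwall-then-expand-the-exponential step for the self-coupling of $\theta$ is only a cosmetic variant of the paper's use of the a priori estimate \eqref{1e}. The one genuine divergence is precisely the point you single out as ``the main technical obstacle'', the Foster--Lyapunov condition for the reflected Euler scheme $\Theta$: the paper dispatches it in two lines by writing $\mathbb{E}[\mathcal{X}^2_{s,u}(X^2)b(X^2)]=\mathbb{E}[(\mathcal{X}^2_{s,u}(X^2)-X^2)b(X^2)]+\mathbb{E}[X^2b(X^2)]$, bounding the second term by \eqref{a1} and the first by Young's inequality together with exactly the one-step increment estimate \eqref{1a} that you already invoke for the coupling --- no Skorokhod representation and no case analysis on the sign of $b(x)$ are needed. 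Your route (Skorokhod formula for $L^2$, the remark that \eqref{1d} at $y=0$ confines $\{x\ge 0: b(x)>0\}$ to the compact set $[0,|b(0)|/\bar b]$, and, in the case $b(x)\le 0$, lower-bounding $\mathbb{E}[\mathcal{X}^2_{s,u}(x)]\ge x+b(x)(u-s)$ by dropping the nonnegative local time) does work, but it is heavier and only yields a remainder of order $(t-s)^{3/2}$ instead of $(t-s)^2$; this is still sufficient, since \eqref{AN2BIS} with any $\varepsilon>0$ implies \eqref{AN2} by Remark~\ref{rk_AN2BIS}, and the $(2,b_*,1)$-coupling claim rests only on your coupling estimate, where you do obtain the residual $C\,\Gamma_2(\mu^1,\mu^2)(t-s)^2$. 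With either treatment of the Foster--Lyapunov step, the conclusion via Theorem~\ref{th:app} (equivalently Proposition~\ref{prop:29} and Theorem~\ref{FINAL}) is exactly as in the paper.
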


The proofs are based on upper bounds for the $ L^2 $ norms of $ \mathcal{X}^1(X^1) -\mathcal{X}^2(X^1) $ as well as  
$ \mathcal{X}^1(X^1) -\mathcal{X}^1(X^2) $. These are performed using Ito's formula and then the hypotheses give that the joint probabilistic representation is satisfied  as stated in Definition~\ref{def:214}. We refer to Appendix~\ref{App_additional} for the details.

\subsection{A neuronal model in $W_1$}
\label{sec:neu}

In this section, we illustrate how our general framework fits an example with 
dynamics of McKean-Vlasov type. The following neuronal model has been
developed and studied in \cite{DGLP},\cite{FoLo}, and \cite{CTV}. Let $X$ be
the unique solution of the following mean field type stochastic equation 
\begin{align}  \label{Neuronal_model}
X_t=X_0+\int_0^tb(X_s)ds+J\int_0^t{\mathbb{E}}[
f(X_s)]ds-\int_0^t\int_{%
\mathbb{R} _+}X_{u-}1_{z\leq f(X_u-)\wedge M }dN(u,z),
\end{align}
where $N$ is  a random Poisson measure\footnote{%
We have preferred the notation $dN(u,z)$ instead $N(du,dz)$ for random
Poisson measures in order to get shorter formulas, but we admit that both
notations are equivalent.} with compensator measure given by the Lebesgue
measure on $\mathbb{R}_+\times\mathbb{R}_+ $. Furthermore,  $M>0$ and $b,f:{\mathbb{R}}_+
\to {\mathbb{R}}_+$ satisfy: 
\begin{equation}  \label{Lip_neuron}
\forall x,y\ge 0, |b(x)-b(y)|\le C_b|x-y|,\ |f(x)-f(y)|\le C_f|x-y| \text{
and }f(0)=0.
\end{equation}
In the case that $\|f\|_\infty\leq M$, we recover the model
in~\cite{CTV}.

Let us define $f_M(x)=f(x)\wedge M$. Then, for $X^i\geq 0 $, we define 
\begin{align}
\mathcal{X}^i_{s,t}(X^i)=&X^i +\int_s^tb_i(u)du+J\int_s^t{\mathbb{E}}%
[f_i(u)]du  \notag \\
&-\int_s^t\int_{\mathbb{R} _+}\mathcal{X}^i_{s,u-}(X^i)1_{z\leq f_M(\mathcal{%
X}^i_{s,u-}(X^i))}dN(u,z),  \label{def:Xi_Neur}
\end{align}
and 
\begin{align*}
(b_i(u),f_i(u))=%
\begin{cases}
(b(\mathcal{X}^1_{s,u}(X^1)),f( \mathcal{X}^1_{s,u}(X^1))) ; & \text{ if }%
i=1, \\ 
(b(X^2),f(X^2)) ; & \text{ if }i=2.%
\end{cases}%
\end{align*}
We assume moreover the following contraction condition: there
exists $\bar{b}>0 $ such that for any $x,y\ge 0$, 
\begin{align}
&\mathrm{sgn}(y-x)(b(y)-b(x))-|y-x|(f_M(y)\vee f_M(x))  \notag \\
&+x(f_M(y)-f_M(x))^+ +y(f_M(x)-f_M(y))^+ \leq -\bar{b} \left\vert
x-y\right\vert . \label{eq:cont_W1}
\end{align}
This condition combines drift and jump effects in the model to create the contraction. We however remark that in the particular case that $f_M$ and $\R_+ \ni x\mapsto \frac{x}{f_M(x)}$  are  nondecreasing functions (which holds for example if $f(x)=x^a$ with $a\in[0,1]$), we have 
\begin{align*}
	-|y-x|(f_M(y)\vee f_M(x))  +x(f_M(y)-f_M(x))^+ +y(f_M(x)-f_M(y))^+ \leq  0,
\end{align*}
since, for $x<y$ we have $-(y-x)f_M(y)+x (f_M(y)-f_M(x))\le -yf_M(y) +xf_M(y)+yf_M(x)-xf_M(x)=-(y-x)(f_M(y)-f_M(x))\le 0$.
In this case, \eqref{eq:cont_W1} is satisfied if $\mathrm{sgn}(y-x)(b(y)-b(x))  \leq -\bar{b} \left\vert
x-y\right\vert $.

\begin{remark}
	Let us note that $\mathcal{X}^2_{s,t}$ stands for the approximation of the continuous process $\mathcal{X}^1_{s,t}$ but is different from the Euler scheme. Indeed, the coefficient of the driving Poisson measure depends on $\mathcal{X}^2_{s,u-}(X^2)$, not on $X_2$. This plays a crucial role in the analysis below, precisely for the term named $I_3(s,t)$ in the proof of Theorem~\ref{thm_neurW1}. Nonetheless, the process $\mathcal{X}^2_{s,t}(X^2)$ can be sampled exactly, as a piecewise deterministic Markov process.  
\end{remark}

From the Lipschitz condition~\ref{Lip_neuron}, there exists a constant $K>0 $
(which depends on $b(0)$) such that $|b(x)|\leq K (1+|x|) $. We define the following
maps for $\mu=\mathcal{L}(X)\in \mathcal{P}_2(\mathbb{R}_+) $ 
\begin{equation*}
\theta _{s,t}(\mu )=\mathcal{L}(\mathcal{X}^1_{s,t}(X)) ,\quad \Theta
_{s,t}(\mu )= \mathcal{L}(\mathcal{X}_{s,t}^{2}(X)).
\end{equation*}

We have under these assumptions the following result.

\begin{theorem}
\label{thm_neurW1} Assume that~\eqref{Lip_neuron} and~\eqref{eq:cont_W1}
hold with $\bar{b}- C_fJ>0$. Then, $\theta $ and $\Theta $ satisfy the $1$%
-Foster-Lyapunov condition, are $(1,b_*,1)$ coupled for every $b_*=\bar{b}%
-C_ f J$, and $\theta $ is self-coupled with the same parameters.

Furthermore, the Markov process $(\mathcal{X}_{s,t}^{1}(X))_{t\geq s}$
admits a unique invariant measure $\nu \in {\mathcal{P}}_1({\mathbb{R}_+})$,
and there exists $C\in {\mathbb{R}}_+$ such that for every $\mu \in {%
\mathcal{P}}_1({\mathbb{R}_+})$, $n\ge 1$ and $t>s$, 
\begin{align*}
W_{1}(\Theta _{s,t_{n}}^{\pi }(\mu ),\nu )\leq& C(\left\Vert \mu \right\Vert
_{1}+1) \frac{(1+\ln(n+1))^{\mathbf{1}_{b_*= 1}}}{n^{b_*\wedge 1}}, \\
 W_{1}(\theta _{s,t}(\mu ),\nu )\leq& C(\left\Vert \mu \right\Vert _{1}
+1)\times (1+(t-s)^{\mathbf{1}_{b_*= 1}}) e^{- (b_*\wedge 1) (t-s)},
\end{align*}
where $t_k=s+\sum_{i=1}^{k}\frac{1}{1+i}$ and $\pi=\{t_0<\dots<t_n\}$.
\end{theorem}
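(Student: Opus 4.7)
The plan is to verify the hypotheses of Theorem~\ref{th:app} with $p=1$ and $\varepsilon=1$: namely the $1$-Foster--Lyapunov condition \eqref{AN2BIS} and the joint probabilistic representation estimate \eqref{AN3}, both for the coupling $(\theta,\Theta)$ and for the self-coupling of $\theta$. The structure runs in parallel with the $W_2$-version proved just above; the main substitution is to replace It\^o's formula applied to the square by the Meyer--It\^o formula for $|\cdot|$ on the pure-drift-plus-jump process. The decisive observation is that the hypothesis \eqref{eq:cont_W1} is tailored precisely to match the drift $+$ jump compensator rate of $|\mathcal{X}^1(X^1)-\mathcal{X}^2(X^2)|$.

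First I would handle the Foster--Lyapunov step. Since $\mathcal{X}^i_{s,t}(X^i)\ge 0$, compensating the Poisson integral in \eqref{def:Xi_Neur} and taking expectations yields a scalar identity for $\mathbb{E}[\mathcal{X}^i_{s,t}(X^i)]$ with integrand $b_i(u)+J\mathbb{E}[f_i(u)]-\mathcal{X}^i_{s,u}(X^i)\,f_M(\mathcal{X}^i_{s,u}(X^i))$. Specializing \eqref{eq:cont_W1} to $x=0$ (using $f_M(0)=0$, $f_M\ge 0$) gives $b(y)-y f_M(y)\le b(0)-\bar b\,y$ for every $y\ge 0$; together with $|f(y)|\le C_f y$ this delivers \eqref{AN2BIS} directly for $i=1$, and, after a Lipschitz comparison between $b(X^2)$ and $b(\mathcal{X}^2_{s,u}(X^2))$ together with a one-step $L^1$ estimate $\mathbb{E}|\mathcal{X}^2_{s,u}(X^2)-X^2|\le C(1+\mathbb{E}|X^2|)(u-s)$, also for $i=2$ (at the price of an extra $(t-s)^{1+\varepsilon}$ correction).

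Second, for the coupling estimate I would set $\mathcal{Y}_{s,u}=\mathcal{X}^1_{s,u}(X^1)-\mathcal{X}^2_{s,u}(X^2)$ and compute $d|\mathcal{Y}_{s,u}|$ using the Meyer--It\^o formula for $|\cdot|$ on finite-variation-plus-jump processes. The drift contribution is $\mathrm{sgn}(\mathcal{Y})\bigl(b(\mathcal{X}^1)-b(X^2)+J\mathbb{E}[f(\mathcal{X}^1)-f(X^2)]\bigr)\,du$. The jump contribution follows from a case analysis on the mark $(u,z)$: both processes jump to $0$ when $z\le f_M(\mathcal{X}^1)\wedge f_M(\mathcal{X}^2)$; only $\mathcal{X}^1$ jumps when $f_M(\mathcal{X}^2)<z\le f_M(\mathcal{X}^1)$; and symmetrically for $\mathcal{X}^2$. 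Summing the three contributions against Lebesgue measure in $z$ produces the expected jump rate
\begin{equation*}
-|\mathcal{Y}|\bigl(f_M(\mathcal{X}^1)\vee f_M(\mathcal{X}^2)\bigr)+\mathcal{X}^2\bigl(f_M(\mathcal{X}^1)-f_M(\mathcal{X}^2)\bigr)^{+}+\mathcal{X}^1\bigl(f_M(\mathcal{X}^2)-f_M(\mathcal{X}^1)\bigr)^{+},
\end{equation*}
which is exactly the jump part of the left-hand side of~\eqref{eq:cont_W1}. Splitting $b(\mathcal{X}^1)-b(X^2)=[b(\mathcal{X}^1)-b(\mathcal{X}^2)]+[b(\mathcal{X}^2)-b(X^2)]$ and similarly for $f$, applying \eqref{eq:cont_W1} to the first brackets, and controlling the residuals by Lipschitzness together with $\mathbb{E}|\mathcal{X}^2_{s,u}-X^2|\le C(1+\mathbb{E}|X^2|)(u-s)$ and $\mathbb{E}|\mathcal{X}^1-X^2|\le\mathbb{E}|\mathcal{Y}|+\mathbb{E}|\mathcal{X}^2-X^2|$, I would reach
\begin{equation*}
\mathbb{E}|\mathcal{Y}_{s,t}|\le\bigl(1-(\bar b-JC_f)(t-s)\bigr)\mathbb{E}|X^1-X^2|+C(1+\mathbb{E}|X^1|+\mathbb{E}|X^2|)(t-s)^2,
\end{equation*}
i.e.\ \eqref{AN3} with $p=1$, $\varepsilon=1$ and $b_*=\bar b-JC_f>0$. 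Running the same argument with $X^2$ replaced by an independent copy of $\mathcal{X}^1$ started at $\widetilde X^1$ gives the self-coupling of $\theta$; this case is in fact simpler because the correction term $b(\mathcal{X}^2)-b(X^2)$ disappears. Theorem~\ref{th:app} then delivers existence and uniqueness of $\nu$ together with both stated convergence rates.

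The main obstacle is the rigorous identification of the jump compensator of $|\mathcal{Y}|$: one must enumerate the three disjoint regimes of $(z,\mathcal{X}^1,\mathcal{X}^2)$ carefully and check that the resulting expression combines with the drift term into precisely the left-hand side of the contraction hypothesis~\eqref{eq:cont_W1}. Once this identification is in place, the Lipschitz-plus-moment residual estimates and the final application of Theorem~\ref{th:app} are a direct transposition of the $W_2$ proof.
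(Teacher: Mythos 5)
Your proposal follows essentially the same route as the paper's proof: take expectations and use \eqref{eq:cont_W1} at $x=0$ together with the one-step $L^1$ estimate for the Foster--Lyapunov condition, then apply the regularized It\^o formula to $|\mathcal{Y}|$, identify the jump compensator through the same three-case analysis so that drift plus jump rate reproduce the left-hand side of \eqref{eq:cont_W1}, control the Euler-freezing residuals by Lipschitzness and the moment bound, obtaining $b_*=\bar b-JC_f$ with $\varepsilon=1$, and conclude via Theorem~\ref{th:app}. The only caveat is your phrase ``independent copy'' for the self-coupling of $\theta$: the two solutions must be driven by the \emph{same} Poisson random measure (synchronous coupling, with an optimal coupling of the initial laws), since the jump cancellation in the case analysis is precisely what makes the contraction work; with that reading, the argument coincides with the paper's.
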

\begin{proof}
For $0\le t-s\le 1$, we obtain by taking expectations on the stochastic
equation that defines $\mathcal{X}^i_{s,\cdot}(X^i) $ in~\eqref{def:Xi_Neur}, we have for $i=1,2$ (recall that $\mathcal{X}%
^i_{s,t}(X^i)\ge 0$), 
\begin{align}
	\label{eq:s}
	{\mathbb{E}} [\mathcal{X}^i_{s,t}(X^i)]-{\mathbb{E}}[X^i]=&\int_s^t{\mathbb{E%
	}}[b_i(r)+J{\mathbb{E}} [f_i(r)]]dr-\int_s^t{\mathbb{E}}[\mathcal{X}%
	^i_{s,r}(X^i)f_M(\mathcal{X}^i_{s,r}(X^i))]dr.
\end{align} 
Next 
using the Lipschitz property~\eqref{Lip_neuron} 
\begin{equation*}
{\mathbb{E}}[\mathcal{X}^i_{s,t}(X^i)]\le {\mathbb{E}}[X^i] +b(0)(t-s)+
(C_b+JC_f) \int_s^t {\mathbb{E}}[\mathcal{X}^i_{s,u}]du,
\end{equation*}
since the jump contribution is nonpositive. This gives 
\begin{align}  \label{eq:est1N}
{\mathbb{E}}[\mathcal{X}^i_{s,t}(X^i)]\leq C(1+{\mathbb{E}}[X^i]),
\end{align}
with $C $ depending on $b(0) $, $C_f $ and $J $.

With these estimates we can now prove~\eqref{AN2BIS}. We use It\^o's fomula
for $F(y)=|y|$, and we use a regularization procedure as in the proof of 
Theorem~\ref{thm_Boltz_W1}. We get 
\begin{align*}
{\mathbb{E}}[| \mathcal{X}^i_{s,t}(X^i)-X^i|]&=\int_s^t{\mathbb{E}}[1_{%
\mathcal{X}^i_{s,r}(X^i)-X^i>0}(b_i(r)+J{\mathbb{E}} [f_i(r)])]dr \\
&+\int_s^t{\mathbb{E}}[(|X^i|-|\mathcal{X}^i_{s,r}(X^i)-X^i|)f_M(\mathcal{X}%
^i_{s,r}(X^i))]dr.
\end{align*}
Using \eqref{eq:est1N}, we obtain for $ i=1 $
\begin{align}
{\mathbb{E}}[| \mathcal{X}^1_{s,t}(X^1)-X^1|]&\le \int_s^t b(0)+ (C_b+C_fJ+M)%
{\mathbb{E}}[\mathcal{X}^1_{s,r}(X^1)]dr  \notag \\
&\leq C(1+{\mathbb{E}}[X^1])(t-s).  \label{eq:mn2w1}
\end{align}
For $ i=2 $ the estimation is done similarly. Note that these two inequalities also imply that for $\mathcal{Y}_{s,t}:=
\mathcal{X}^1_{s,t}(X^1)-
\mathcal{X}^2_{s,t}(X^2)$, we have
\begin{align}
	\label{eq:t1}
\left|{\mathbb{E}}[| \mathcal{Y}_{s,t}|-|\mathcal{Y}_{s,s}|]\right|\leq C(1+{\mathbb{E}}[X^1]+{\mathbb{E}}[X^2])(t-s).
\end{align}

The contraction hypothesis~\eqref{eq:cont_W1} with $x=0$ gives $%
b(y)-yf_M(y)\le b(0) - \bar{b}y$ for $y\ge 0$, and we thus obtain  applying this inequality to \eqref{eq:s}
\begin{align*}
{\mathbb{E}} [\mathcal{X}^i_{s,t}(X^i)]-{\mathbb{E}}[X^i]\leq \int_s^t(-\bar{%
b}+JC_f) {\mathbb{E}}[\mathcal{X}^i_{s,r}(X^i)]dr+b(0)(t-s)+C(1+{\mathbb{E}}%
[X^i])(t-s)^2,
\end{align*}
by using~\eqref{eq:mn2w1} for $i=2$. We now use that $\mathcal{X}%
^i_{s,r}(X^i)\le X^i+ |\mathcal{X}^i_{s,r}(X^i)- X^i|$ with \eqref{eq:mn2w1}
to obtain \eqref{AN2BIS} for $p=1 $ with $\varepsilon=1$.

Next, in order to prove \eqref{AN3}, we write
\begin{align*}
\mathcal{Y}_{s,t}=\mathcal{Y}_{s,s}+&\int_{s}^{t}b(\mathcal{X}%
^1_{s,r}(X^1))-b(X^2)dr+J \int_{s}^{t}{\mathbb{E}} [f(\mathcal{X}%
^1_{s,r}(X^1))]-{\mathbb{E}} [f({X}^2)]ds\\&-\int_{s}^{t}\int_{\mathbb{R}
_{+}}c(r,z)dN(r,z).
\end{align*}%
with%
\begin{equation*}
c(r,z)=\mathcal{X}^1_{s,r}(X^1)1_{z\leq f_M(\mathcal{X}^1_{s,r}(X^1))}-%
\mathcal{X}^2_{s,r}(X^2)1_{z\leq f_M(\mathcal{X}^2_{s,r}(X^2))}
\end{equation*}

Then, regularizing $y\mapsto|y|$ and using It\^{o}'s formula and taking the
limit as $\varepsilon \rightarrow 0$ we get%
\begin{align*}
|\mathcal{Y}_{s,t}| =&|\mathcal{Y}_{s,s}|+\int_{s}^{t}\mathrm{sgn}(\mathcal{Y%
}_{s,r})(b(\mathcal{X}^1_{s,r}(X^1))-b({X}^2)) dr \\
&+J\int_{s}^{t}\mathrm{sgn}(\mathcal{Y}_{s,r}) ({\mathbb{E}}[ f(\mathcal{X}%
^1_{s,r}(X^1))]-{\mathbb{E}} [f(X^2) ])dr \\
&+\int_{s}^{t}\int_{\mathbb{R}_{+}}|\mathcal{Y}_{s,r-}-c(r,z)|-|\mathcal{Y}%
_{s,r-}| dN(r,z) \\
=:&|\mathcal{Y}_{s,s}|+\sum_{i=1}^{3}I_{i}(s;t).
\end{align*}

We consider the estimate for ${\mathbb{E}}[I_{3}(s;t)]$. We have

\begin{align*}
{\mathbb{E}}[I_3(s;t)]=&{\mathbb{E}}\int_{s}^{t}\big(-|\mathcal{Y}_{s,r}|f_M(%
\mathcal{X}^1_{s,r}(X^1))\wedge f_M(\mathcal{X}^2_{s,r}(X^2)) \\
&+(\mathcal{X}^2_{s,r}(X^2)-|\mathcal{Y}_{s,r}|)(f_M(\mathcal{X}%
^1_{s,r}(X^1))- f_M(\mathcal{X}^2_{s,r}(X^2)))^+ \\
&+(\mathcal{X}^1_{s,r}(X^1)-|\mathcal{Y}_{s,r}|)(f_M(\mathcal{X}%
^2_{s,r}(X^2))- f_M(\mathcal{X}^1_{s,r}(X^1)))^+\big) \ dr.
\end{align*}
Let denote by $(y,x) $ the variables $(\mathcal{X}%
^1_{s,u}(X^1),\mathcal{X}^2_{s,u}(X^2)) $ and write 
\begin{align*}
&-|y-x|f_M(x)\wedge
f_M(y)+(x-|y-x|)(f_M(y)-f_M(x))^++(y-|y-x|)(f_M(x)-f_M(y))^+ \\
=&-|y-x|(f_M(y)\vee f_M(x))+x(f_M(y)-f_M(x))^++y(f_M(x)-f_M(y))^+.
\end{align*}
Therefore, using the contraction hypothesis~\eqref{eq:cont_W1} and the
Lipschitz property~\eqref{Lip_neuron}, we get 
\begin{align*}
{\mathbb{E}}[|\mathcal{Y}_{s,t}|]&\leq {\mathbb{E}}[|\mathcal{Y}_{s,s}|] +{%
\mathbb{E}}\left[ \int_s^t(-\bar{b}+JC_f)|\mathcal{Y}_{s,r}|dr \right] +(C_b
+C_fJ){\mathbb{E}} \int_s^t |\mathcal{X}^2_{s,r}(X^2)-X^2| dr \\
& \le {\mathbb{E}}[|\mathcal{Y}_{s,s}|]+{\mathbb{E}}\left[ \int_s^t(-\bar{b}%
+JC_f)|\mathcal{Y}_{s,r}|dr\right]+C(1+{\mathbb{E}}[|X^2|])(t-s)^2,
\end{align*}
by using~\eqref{eq:mn2w1} for the last inequality. 
Using \eqref{eq:t1}, we see that \eqref{AN3} is satisfied for $b_*=\bar{b}-JC_f$ since $e^{-(\bar{%
		b}-JC_f)t}=_{t\to 0} 1-(\bar{b}-JC_f)t+O(t^2)$.
The proof that $\theta $ is self-coupled can be obtained as in the above proof.
\end{proof}

\subsection{Boltzmann type equations}
\label{sec:bol} 
In this section, we present an example based on Boltzmann type equations.
These are stochastic equations driven by a Poisson point measure whose compensator depends on the law of the solution of the equation.
Such equations appear in the modelisation of interacting physical particles.

\blue{ Here, we work with Poisson
point measures that corresponds to the ``finite variation regime". This is a
simpler framework, which allows to consider a more complex dependence of the
Poisson point measure on the position of the particle. This is
the case of the Boltzmann equation with hard potentials for example (here we
deal with a drastically simplified example). In order to be able to deal
with the indicator function which appears in the stochastic integral we have
to use $L^{1}$ estimates, and the appropriate distance here is $W_{1}$. Note that an alternative setting is the ``martingale regime", for which we have to work with a compensated Poisson point measure. In this case, all the estimates have to be done in $L^{2}$ and the natural Wasserstein distance is then $W_{2}$. This case, which corresponds to Maxwellian molecules,  is addressed in Appendix~\ref{App_Boltz}.} 

 We consider the following $d-$dimensional stochastic equation 
\begin{align}  \label{def_X1_Boltzmann_W1}
X_{s,t}^{1}&(X^{1}) =X^{1}+\int_{s}^{t}b(X_{s,r}^{1}(X^{1}))dr 
\\
&+\int_{s}^{t}\int_{E}\int_{\mathbb{R}_{+}}\int_{\mathbb{R}%
^{d}}c(v,z,X_{s,r-}^{1}(X^{1}))1_{ \{u\leq \gamma
(v,z,X_{s,r-}^{1}(X^{1}))\}}dN_{\mathcal{L} (X_{s,r}^{1})}(v,u,z,r)  \notag
\end{align}
where $b:{\mathbb{R}}^d\to {\mathbb{R}}^d$, $c:{\mathbb{R}}^d \times E\times 
{\mathbb{R}}^d  \to {\mathbb{R}}^d$ and $dN_{\mathcal{L}(X_{s,r}^{1})}(v,u,z,r)$ is the Poisson point measure
on $\mathbb{R}^{d}\times \mathbb{R}_{+}\times E\times \mathbb{R}_{+}$ with
compensator $d\widehat{N}_{\mathcal{L}(X_{s,r}^{1})}(v,u,z,r)=\mathcal{L}
(X_{s,r}^{1})(dv)du\Lambda (dz)dr.$ Here, $ \Lambda $ is a measure on the measurable space $ E $.

We associate to the above equation, the one step Euler scheme 
\begin{align}
X_{s,t}^{2}(X^{2})=&X^{2}+\int_{s}^{t}b(X^{2})dr  \label{FV2}\\
&+\int_{s}^{t}\int_{\mathbb{R}%
^{d}} \int_{\mathbb{R}_{+}}\int_{E}c(v,z,X^{2})1_{\{u\leq \gamma
(v,z,X^{2})\}}dN_{ \mathcal{L}(X^{2})}(v,u,z,r) \notag
\end{align}
where $dN_{\mathcal{L}(X^{2})}(v,u,z,r)$ is the Poisson point measure with
compensator\linebreak $d\widehat{N}_{\mathcal{L}(X^{2})}(v,u,z,r)=\mathcal{L}
(X^{2})(dv)du\Lambda (dz)dr.$

Moreover, we define the maps on the Wasserstein space for $ \mathcal{L}(X)=\mu $
\begin{equation}
\theta _{s,t}(\mu )=\mathcal{L}(X_{s,t}^{1}(X)) 
,\quad \Theta _{s,t}(\mu )=\mathcal{L}(X_{s,t}^{2}(X)).  \label{FV3}
\end{equation}
We will work under the following hypotheses that guarantee that the equation~\eqref{def_X1_Boltzmann_W1} has a weak solution and that $\theta_{s,t}$ is well defined.

\textbf{H1.} For every $x,y\in \mathbb{R}^d$ 
\begin{equation}
\left\langle x-y,b(x)-b(y)\right\rangle \leq -\overline{b}\left\vert
x-y\right\vert ^{2},\quad \left\vert b(x)-b(y)\right\vert \leq C_b\left\vert
x-y\right\vert  \label{FV4}
\end{equation}

\textbf{H2}. For every $x,x^{\prime },v,v^{\prime }\in \mathbb{R}^d$ and $%
z\in E $ there exists a measurable function $\bar{c}:E\to \mathbb{R}_+ $
such that $Q:= \int_{E}\overline{c}(z)\Lambda (dz)<\infty$ and 
\begin{align}  \label{FV5}
\left\vert (c(v,z,x)-c(v^{\prime },z,x^{\prime }))\gamma(v,z,x)\right\vert
+&\left\vert c(v^{\prime },z,x^{\prime })(\gamma (v,z,x)-\gamma (v^{\prime
},z,x^{\prime }))\right\vert  \notag \\
& \leq \overline{c}(z)(\left\vert v-v^{\prime }\right\vert +\left\vert
x-x^{\prime }\right\vert ),\\
\left\vert c\gamma (v,z,x)\right\vert &\leq \overline{c}(z)(1+\left\vert
v\right\vert +\left\vert x\right\vert ).  \label{FV5'}
\end{align}

\begin{lemma}
\label{lemmetech_BW1} Under Assumption \textbf{H2}, we have 
\begin{align*}
\int_E \int_{{\mathbb{R}}_+} \left| c(v,z,x)1_{u\le
\gamma(v,z,x)}-c(v^{\prime },z,x^{\prime })1_{u\le \gamma(v^{\prime
},z,x^{\prime })}\right|du \Lambda(dz)\le Q (|v-v^{\prime }|+|x-x^{\prime }|).
\end{align*}
\end{lemma}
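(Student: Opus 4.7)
The plan is to carry out the inner integration over $u$ explicitly. By symmetry it suffices to treat the case $\gamma(v,z,x)\le \gamma(v',z,x')$ (otherwise simply swap the roles of $(v,z,x)$ and $(v',z,x')$). Partition $\R_+$ into the three intervals $[0,\gamma(v,z,x)]$, $(\gamma(v,z,x),\gamma(v',z,x')]$, and $(\gamma(v',z,x'),\infty)$. On the first interval both indicators equal one and the integrand is $|c(v,z,x)-c(v',z,x')|$; on the second, only the second indicator is one and the integrand equals $|c(v',z,x')|$; on the third the integrand vanishes.

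Performing the $u$-integration thus gives
\begin{equation*}
\int_{\R_+}\bigl|c(v,z,x)\mathbf{1}_{u\le\gamma(v,z,x)}-c(v',z,x')\mathbf{1}_{u\le\gamma(v',z,x')}\bigr|du=\gamma(v,z,x)\,|c(v,z,x)-c(v',z,x')|+\bigl(\gamma(v',z,x')-\gamma(v,z,x)\bigr)|c(v',z,x')|,
\end{equation*}
which is exactly $|(c(v,z,x)-c(v',z,x'))\gamma(v,z,x)|+|c(v',z,x')(\gamma(v,z,x)-\gamma(v',z,x'))|$ since $\gamma\ge 0$. The first bound in~\eqref{FV5} (Hypothesis \textbf{H2}) then upper-bounds this by $\overline{c}(z)(|v-v'|+|x-x'|)$.

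Integrating against $\nu(dz)$ and using the definition $Q=\int_E \overline{c}(z)\nu(dz)$ yields the stated inequality. The only mild subtlety is making sure the case distinction does not lose a factor when one reverses the ordering assumption, but the right-hand side of~\eqref{FV5} is symmetric in $(v,x)\leftrightarrow(v',x')$, so no issue arises. There is no real obstacle here: the lemma is essentially an explicit computation combined with the structural assumption~\eqref{FV5}, which has been tailored precisely for this step.
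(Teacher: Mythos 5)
Your proof is correct and follows essentially the same computation as the paper: both reduce the $u$-integral to the quantity $|(c(v,z,x)-c(v',z,x'))\gamma(v,z,x)|+|c(v',z,x')(\gamma(v,z,x)-\gamma(v',z,x'))|$ and conclude via~\eqref{FV5} and the definition of $Q$. The only (harmless) difference is that you evaluate the $u$-integral exactly with a case distinction on the ordering of the two $\gamma$'s (which indeed requires the swapped instance of~\eqref{FV5}, available since \textbf{H2} is quantified over all arguments), whereas the paper avoids this by the triangle inequality, adding and subtracting $c(v',z,x')\mathbf{1}_{u\le\gamma(v,z,x)}$.
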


\begin{proof}
The integral is upper bounded by 
\begin{align*}
&\int_E \int_{{\mathbb{R}}_+} \left| c(v,z,x)-c(v^{\prime },z,x^{\prime
})\right|1_{u\le \gamma(v,z,x)} du \Lambda(dz) \\
&+ \int_E \int_{{\mathbb{R}}_+} |c(v^{\prime },z,x^{\prime })| \left|
1_{u\le \gamma(v,z,x)}-1_{u\le \gamma(v^{\prime },z,x^{\prime })}\right|du
\Lambda(dz) \\
=& \int_E \left| c(v,z,x)-c(v^{\prime },z,x^{\prime })\right| \gamma(v,z,x)
+|c(v^{\prime },z,x^{\prime })| \left| \gamma(v,z,x)-\gamma(v^{\prime
},z,x^{\prime })\right| \Lambda(dz) \\
\le & Q (|v-v^{\prime }|+|x-x^{\prime }|).\qedhere
\end{align*}
\end{proof}

Under hypotheses {\bf H1} and {\bf H2}, we know by~\cite[Theorem 3.5]{AB} that there exists
a unique  time homogeneous continuous  flow $\theta_{s,t}:\mathcal{P}_1({\mathbb{R}}^d) \to \mathcal{P}_1({%
\mathbb{R}}^d)$ such that  
\begin{equation}  \label{sewing_Boltzmann_W1}
\forall T>0,\exists C>0,\forall 0\le s\le t\le T, \ \mathcal{D}%
_{1}(\theta_{s,t},\Theta_{s,t})\le C(t-s)^2,
\end{equation}
with $\mathcal{D}_{1}(\Theta,\overline{\Theta}):=\sup_{\mu \in \mathcal{P}_1({%
\mathbb{R}}^d)}\frac{W_{1}(\Theta(\mu),\overline{\Theta}(\mu))}{1+\|\mu\|_{1}%
}$, and we also have 
\begin{equation}  \label{FN6'}
W_1(\theta_{s,t}(\mu),\Theta_{t^n_n,t^n_{n-1}}\circ\dots
\circ\Theta_{t^n_0,t^n_1}(\mu) )\to_{n\to \infty} 0
\end{equation}
for any sequence $s=t^n_0<t^n_1<\dots<t^n_n=t$ such that $\max_{1\le i\le n}
t^n_i -t^n_{i-1} \to_{n\to \infty} 0$. Besides, \cite[Theorem 3.8]{AB} gives
the existence of a solution to~\eqref{def_X1_Boltzmann_W1} such that $%
\mathcal{L}(X^1_{s,t}(X^1))=\theta_{s,t}(\mu^1)$ when $\mathcal{L}(X^1)= \mu^1 \in \mathcal{P}%
_1({\mathbb{R}}^d)$, and \cite[Proposition 3.9]{AB} gives that any solution of~\eqref{def_X1_Boltzmann_W1} has the same marginal laws. We have the following result for the convergence of a scheme towards the invariant measure.

\begin{theorem}
\label{thm_Boltz_W1} Assume that~(\ref{FV4}), (\ref{FV5}) hold with $2Q< 
\overline{b}$. Then, $\theta $ and $\Theta $ satisfy the $1$-Foster-Lyapunov
condition, are $(1,b_*,1)$ coupled for $b_*=\bar{b}-2Q$, and $\theta$ is
self-coupled with the same parameters.

Furthermore, the Markov process $({X}_{s,t}^{1}(X))_{t\geq s}$
admits a unique invariant measure $\nu \in {\mathcal{P}}_1({\mathbb{R}}^d)$,
and there exists $C\in {\mathbb{R}}_+$ such that for every $\mu \in {%
\mathcal{P}}_1({\mathbb{R}}^d)$, $n\ge 1$ and $t>s$, 
\begin{align*}
&W_{1}(\Theta _{s,t_{n}}^{\pi }(\mu ),\nu )\leq C(\left\Vert \mu \right\Vert
_{1}+1) \frac{(1+\ln(n+1)^{\mathbf{1}_{b_*= 1}})}{n^{b_*\wedge 1}}, \\
& W_{1}(\theta _{s,t}(\mu ),\nu )\leq C(\left\Vert \mu \right\Vert _{1}
+1)\times (1+(t-s)^{\mathbf{1}_{b_*= 1}}) e^{- b_*\wedge 1 (t-s)},
\end{align*}%
where $t_k=s+\sum_{i=1}^{k}\frac{1}{1+i}$ and $\pi=\{t_0<\dots<t_n\}$. 
\end{theorem}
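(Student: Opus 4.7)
The plan is to apply Theorem~\ref{FINAL} to the pair $(\theta,\Theta)$ defined by~\eqref{FV3}. Since the sewing-lemma estimate~\eqref{sewing_Boltzmann_W1} already provides $W_1(\theta_{s,t}(\mu),\Theta_{s,t}(\mu))\le C(t-s)^2(1+\|\mu\|_1)$, Lemma~\ref{lem_coupling} reduces the entire theorem to proving that the Euler family $\Theta$ is $1$-bounded and $(1,b_\ast,1)$-self-coupled with $b_\ast=\bar b-2Q$. Once these two facts are in hand, Lemma~\ref{lem_coupling} transports both properties to $\theta$ and produces the coupling between $\theta$ and $\Theta$; Proposition~\ref{prop:29} and Theorem~\ref{FINAL} then supply the invariant measure $\nu$ and the announced convergence rates, with the logarithmic factor arising precisely in the borderline case $b_\ast=1$.

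For the $1$-Foster-Lyapunov condition on $\Theta$, I would start from $X^2_{s,t}(X)=X+b(X)(t-s)+J$ coming from~\eqref{FV2} and control $\mathbb E|X^2_{s,t}(X)|$. Applying \textbf{H1} with $y=0$ gives $\langle X,b(X)\rangle\le-\bar b|X|^2+|X||b(0)|$, hence a square expansion followed by a Taylor bound on the square root produces $|X+b(X)(t-s)|\le(1-\bar b(t-s)+O((t-s)^2))|X|+C(t-s)$. The jump contribution $J$ is controlled via~\eqref{FV5'}, which yields $\mathbb E|J|\le Q(t-s)(1+2\|\mu\|_1)$ after integrating against the compensator $\mu(dv)\,du\,\nu(dz)\,dr$. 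Combining the two and invoking Remark~\ref{rk_AN2BIS} delivers~\eqref{AN2BIS} with effective contraction rate $\widehat b=\bar b-2Q>0$, hence $1$-boundedness by Lemma~\ref{lem_bound}.

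The heart of the argument is the self-coupling of $\Theta$. Given $\mu^1,\mu^2\in\mathcal P_1(\mathbb R^d)$, I would realise an optimal $W_1$-coupling $\rho$ through a pair $(X^1,X^2)$ with $\mathbb E|X^1-X^2|=W_1(\mu^1,\mu^2)$, and drive both one-step schemes with a single joint Poisson point measure $N$ on $\mathbb R^d\times\mathbb R^d\times\mathbb R_+\times E\times\mathbb R_+$ of compensator $\rho(dv_1,dv_2)\,du\,\nu(dz)\,dr$, whose first- and second-velocity marginals recover the driving measures prescribed by~\eqref{FV2} for $X^2_{s,t}(X^1)$ and $X^2_{s,t}(X^2)$ respectively. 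The difference then decomposes as $(X^1-X^2)+(b(X^1)-b(X^2))(t-s)+\Delta J$. For the drift part, \textbf{H1} and a square-root Taylor expansion give the bound $(1-\bar b(t-s)+C(t-s)^2)\mathbb E|X^1-X^2|$. For $\mathbb E|\Delta J|$, Lemma~\ref{lemmetech_BW1} applied pointwise inside the integration against $\rho$ yields $Q(t-s)\bigl(\mathbb E|v_1-v_2|+\mathbb E|X^1-X^2|\bigr)=2Q(t-s)W_1(\mu^1,\mu^2)$, the first term collapsing to $W_1(\mu^1,\mu^2)$ precisely by optimality of $\rho$. Summing the two contributions gives the required self-coupling inequality with $b_\ast=\bar b-2Q$ and $\varepsilon=1$, the $O((t-s)^2)$ remainder being absorbed into $C_\ast\Gamma_1(\mu^1,\mu^2)(t-s)^2$.

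The main obstacle in this plan is the construction of the joint Poisson point measure above: the compensator must use an \emph{optimal} $W_1$-coupling $\rho$ of $\mu^1$ and $\mu^2$, because only then does the $|v_1-v_2|$ term produced by Lemma~\ref{lemmetech_BW1} collapse to $W_1(\mu^1,\mu^2)$, providing the sharp factor $2Q\cdot W_1(\mu^1,\mu^2)$ and hence the clean contraction rate $\bar b-2Q$; a product or otherwise naive coupling would bring in uncontrolled $\|\mu^i\|_1$ contributions in the leading-order term and fail to close the contraction. Once the coupling is set up correctly, all remaining steps are routine invocations of Lemma~\ref{lem_coupling}, Proposition~\ref{prop:29} and Theorem~\ref{FINAL}.
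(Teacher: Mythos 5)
Your proposal is correct and follows essentially the same route as the paper: it reduces, via the sewing estimate~\eqref{sewing_Boltzmann_W1} and Lemma~\ref{lem_coupling}, to proving the $1$-Foster-Lyapunov condition and the $(1,\bar{b}-2Q,1)$ self-coupling of $\Theta$, which you obtain exactly as the paper does through a single joint Poisson point measure built on an optimal $W_1$-coupling of the initial laws together with Lemma~\ref{lemmetech_BW1}, before concluding with Proposition~\ref{prop:29} and Theorem~\ref{FINAL}. The only (harmless) deviation is technical: you derive the one-step inequalities pathwise, using the triangle inequality and the deterministic contraction $|x-y+(b(x)-b(y))(t-s)|\le (1-\bar{b}(t-s)+C(t-s)^2)|x-y|$ made possible by the frozen coefficients of the scheme, whereas the paper runs It\^o's formula on a regularization of $|\cdot|$ and corrects the drift terms $b(X^i)-b(x^i_{s,r})$ via~\eqref{maj_diff_BW1}; both yield the same estimates and the same constant $b_*=\bar{b}-2Q$.
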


\begin{proof}
The proof is an application of  Theorem~\ref{th:app}. From~\eqref{sewing_Boltzmann_W1} and Lemma~\ref{lem_coupling}, it
is sufficient to  check that $\Theta$ is $(1,b_*,1)$ self-coupled for $b_*=%
\overline{b}-2Q$, and satisfies the $1$-Foster-Lyapunov condition.

We consider a filtered probability space $(\Omega ,\mathcal{F},\mathbb{P})$
endowed with a Poisson point measure $dN(w,u,z,r)$ of intensity $dwdu\Lambda
(dz)dr$ on $(0,1)\times \mathbb{R}_{+}\times E\times \mathbb{R}_{+}$. We
denote by $(\mathcal{F}_{t})_t$ the filtration generated by this random measure.
Moreover, we consider $X^{i}\in L^{1}(\Omega )$, $i=1,2$, which are $\mathcal{%
F}_{s}$ measurable and such that the distribution of $(X^{1},X^{2})$ is an $%
W_{1}$-optimal coupling for $\mu ^{1}$ and $\mu ^{2}.$ In particular $%
X^{i}\sim \mu ^{i}$. We consider a measurable map $\tau:[0,1]\to {\mathbb{R}}%
^d \times {\mathbb{R}}^d$ such that $\mathcal{L}(\tau(U))=\mathcal{L}(X^1,X^2)$ when $U$ is a $ [0,1] $ uniform distributed random variable. We note $\tau^i$, $i=1,2$ its
coordinates, and we define 
\begin{equation}
x_{s,t}^{i}=X^{i}+\int_{s}^{t}b(X^{i})dr+\int_{s}^{t}
\int_{E}\int_{0}^{1}q(\tau^{i}(w),u,z,X^{i})dN(w,u,z,r)  \label{EQ2}
\end{equation}
with 
\begin{equation*}
q(v,u,z,x)=c(v,z,x)1_{\{u\leq \gamma (v,z,x)\}}.
\end{equation*}
It is easy to check that $\mathcal{L}(x_{s,t}^i)=\mathcal{L} (\Theta_{s,t}(\mu^i))$.

Next, we check the Foster-Lyapunov condition. Notice that, as an immediate
consequence of our hypothesis (\ref{FV4}) we have, 
\begin{align}
\frac{1}{\left\vert x\right\vert }\left\langle x,b(x)\right\rangle =&\frac{1 
}{\left\vert x\right\vert }\left\langle x-0,b(x)-b(0)\right\rangle +\frac{1}{
\left\vert x\right\vert }\left\langle x,b(0)\right\rangle \leq -\overline{b}
\left\vert x\right\vert +\left\vert b(0)\right\vert  \label{FN9} \\
\left\vert b(x)\right\vert \leq &K(1+\left\vert x\right\vert ).  \label{FN10}
\end{align}
We then observe that 
\begin{align}
{\mathbb{E}}[\vert x_{s,t}^{1}(X^{1})-X^{1}\vert] &\leq (t-s)%
\mathbb{E}[|b(X^{1})|]+{\mathbb{E}}\int_{s}^{t}\int_{E}\int_{0}^{1}\left\vert c\gamma (\tau^{1}(w),z,X^{1})\right\vert dw \Lambda (d z)dr  \notag \\
&\leq  (t-s)\left(K(1+ \mathbb{E}[|X^1|])+ Q(1+2\mathbb{E}[|X^1|])\right),
\label{maj_diff_BW1}
\end{align}
using~\eqref{FN10} and~\eqref{FV5'}.

We now apply It\^o's formula with the function $F(x)=\left\vert x\right\vert 
$. In order to do this we consider $\psi_\varepsilon:{\mathbb{R}}_+\to {%
\mathbb{R}}_+$ defined by $\psi_\varepsilon(x)=\frac{x^2}{2\varepsilon}$ for 
$x\in [0,\varepsilon]$ and $\psi_\varepsilon(x)=x-\varepsilon/2$ for $%
x>\varepsilon $. 
We then define $F_{\varepsilon }(x)=\psi _{\varepsilon }(\left\vert
x\right\vert )=\psi _{\varepsilon }(F(x))$: it is $C^1$ and we have $%
\lim_{\varepsilon \rightarrow 0}F_{\varepsilon }(x)=F(x)=\left\vert
x\right\vert $ and $\lim_{\varepsilon \rightarrow 0}\partial
_{i}F_{\varepsilon }(x)=\frac{x_{i}}{\left\vert x\right\vert }.$ We use now
It\^{o}'s formula (see e.g.~\cite[Theorem IV.18]{RW} for the finite
variation case) and we get 
\begin{align*}
	{\mathbb{E}}[F_{\varepsilon }(x_{s,t}^{1})] =&{\mathbb{E}}[F_{\varepsilon
	}(X^{1})]+{\mathbb{E}}\int_{s}^{t}\left\langle \nabla F_{\varepsilon
	}(x_{s,r}^{1}),b(X^{1})\right\rangle dr \\
	&+{\mathbb{E}}\int_{s}^{t}\int_{E}\int_0^\infty\int_{0}^{1} (F_{\varepsilon
	}(x_{s,r}^{1}+q(\tau^{1}(w),u,z,X^{1}))-F_{\varepsilon }(x_{s,r}^{1}))dwdu
	\Lambda(dz)dr.
\end{align*}
Passing to the limit with $\varepsilon \rightarrow 0$ we get 
\begin{align}
{\mathbb{E}}[\vert x_{s,t}^{1}\vert ]=&{\mathbb{E}}[\vert X^{1}\vert ]+{%
\mathbb{E}}\int_{s}^{t}\frac{1}{\vert x_{s,r}^{1}\vert }\left\langle
x_{s,r}^{1},b(X^{1})\right\rangle dr  \label{eq:ret} \\
&+{\mathbb{E}}\int_{s}^{t}\int_{E}\int_0^\infty\int_{0}^{1} (\left\vert
x_{s,r}^{1}+q(\tau^{1}(w),u,z,X^{1})\right\vert-|x_{s,r}^{1}| )dwdu \Lambda(dz)dr.
\notag
\end{align}

By using (\ref{FN9}), we get 
\begin{align*}
\frac{1}{\left\vert x_{s,r}^{1}\right\vert }\left\langle
x_{s,r}^{1},b(X^{1})\right\rangle &=\frac{1}{\left\vert
	x_{s,r}^{1}\right\vert }\left\langle x_{s,r}^{1},b(x_{s,r}^{1})\right\rangle
+\frac{1}{\left\vert x_{s,r}^{1}\right\vert }\left\langle
x_{s,r}^{1},b(X^{1})-b(x_{s,r}^{1})\right\rangle \\
&\leq -\overline{b}\left\vert x_{s,r}^{1}\right\vert +\left\vert
b(0)\right\vert +C_b\left\vert X^{1}-x_{s,r}^{1}\right\vert \\
&\leq -\overline{b}\vert X^{1}\vert +\left\vert b(0)\right\vert +(\overline{%
	b}+C_b)\left\vert X^{1}-x_{s,r}^{1}\right\vert .
\end{align*}
In particular, we get from~\eqref{maj_diff_BW1} 
\begin{equation*}
{\mathbb{E}}\int_{s}^{t}\frac{1}{\left\vert x_{s,r}^{1}\right\vert }%
\left\langle x_{s,r}^{1},b(X^{1})\right\rangle dr\leq -%
\overline{b}(t-s) {\mathbb{E}}[|X_1|] +\left\vert b(0)\right\vert (t-s)+C(1+{%
\mathbb{E}}[\left\vert X^{1}\right\vert] )(t-s)^{2}.
\end{equation*}

We write now 
\begin{align*}
&{\mathbb{E}}\int_{s}^{t}\int_{0}^{1}\int_{E}\int_{\mathbb{R}%
_{+}}(\left|x_{s,r}^{1}+q(\tau^{1}(w),u,z,X^{1})\right|-\left|x_{s,r}^{1}%
\right|) dwdu \Lambda(dz) dr \\
&={\mathbb{E}}\int_{s}^{t}\int_{0}^{1}\int_{E}\int_{\mathbb{R}%
_{+}}\left(\left\vert x_{s,r}^{1}+c(\tau^{1}(w),z,
X^{1})\right\vert-|x_{s,r}^{1}|\right) 1_{\{u\leq \gamma (\tau ^{1}(w),z,
X^{1})\}})dw du \Lambda(dz) dr \\
&\leq {\mathbb{E}}\int_{s}^{t}\int_{0}^{1}\int_{E}\left\vert c\gamma (\tau
^{1}(w),z,X^{1})\right\vert dw\Lambda (dz)dr \\
&\leq Q{\mathbb{E}}\int_{s}^{t}\int_{0}^{1}(1+\vert \tau^{1}(w)\vert +\vert
X^{1}\vert )dw\Lambda (dz)dr =Q(t-s)(1+2{\mathbb{E}}[\vert X^{1}\vert] ).
\end{align*}
Using the two last estimates in \eqref{eq:ret}, we get 
\begin{equation*}
{\mathbb{E}}[\vert x_{s,t}^{1}\vert ]\leq {\mathbb{E}}%
[\vert X^{1}\vert ](1-(\overline{b}-2Q)(t-s))+ (|b(0)|+Q)(t-s)+
C(1+{\mathbb{E}}[\vert X^{1}\vert] )(t-s)^{2},
\end{equation*}
which proves the Foster-Lyapunov condition by Remark~\ref{rk_AN2BIS}.

We now prove that $\Theta$ is $(1,b_*,1)$-self-coupled. Let $%
y_{s,t}=x^1_{s,t}-x^2_{s,t}$. Using It\^{o}'s formula (again, one has to
take first a regularization and then to pass to the limit) we get 
\begin{align}
{\mathbb{E}}[\vert y_{s,t}\vert] =&{\mathbb{E}}[\vert
X^{1}-X^{2}\vert] +{\mathbb{E}}\int_{s}^{t}\left\langle \frac{y_{s,r}}{%
	\left\vert y_{s,t}\right\vert } , b(X_1)-b(X_2)\right\rangle dr \label{eq:refo}\\
&+{\mathbb{E}}\int_{s}^{t}\int_{E}\int_{\mathbb{R}_{+}}\int_{0}^{1}
\left\vert y_{s,r-}+\Delta q(w,u,z)\right\vert -\left\vert
y_{s,r-}\right\vert dN(w,u,z,r) \notag
\end{align}
with $\Delta q(w,u,z)=q(\tau^{1}(w),u,z,X^{1})-q(\tau^{2}(w),u,z,X^{2})$.
We will use this equality to do two different estimations. 

First, we have
\begin{align*}
&\left|{\mathbb{E}}\int_{s}^{t}\int_{E}\int_{\mathbb{R}_{+}}\int_{0}^{1}
(\left\vert y_{s,r-}+\Delta q(r,u,w,z)\right\vert -\left\vert
y_{s,r-}\right\vert )dN(w,u,z,r)\right| \\
&\leq {\mathbb{E}}\int_{s}^{t}\int_{E}\int_{\mathbb{R}_{+}}\int_{0}^{1}
\left\vert \Delta q(w,u,z)\right\vert dwdu \Lambda (dz)dr.
\end{align*} 
By Lemma~\ref{lemmetech_BW1}, we get 
\begin{equation*}
\int_{E}\int_{\mathbb{R}_{+}} \left\vert \Delta q(w,u,z)\right\vert du \Lambda
(dz)\le Q \left(\vert \tau^{1}(w)-\tau^{2}(w)\vert +\vert
X^{1}-X^{2}\vert\right),
\end{equation*}
and thus 
\begin{align}
&{\mathbb{E}}\int_{s}^{t}\int_{E}\int_{\mathbb{R}_{+}}\int_{0}^{1}
\left\vert \Delta q(r,u,w,z)\right\vert dwdu \Lambda (dz)dr \leq 2Q (t-s) {%
\mathbb{E}}[|X^1-X^2|].
\label{eq:refo1}
\end{align}
Second, using that $ b $ is Lipschitz, we obtain 
\begin{align*}
\left|	{\mathbb{E}}[\vert y_{s,t}\vert] -{\mathbb{E}}[\vert
	X^{1}-X^{2}\vert]\right|\leq C(1+{\mathbb{E%
	}}[\vert X^{1}\vert] +{\mathbb{E}}[\vert X^{2}\vert]
	)(t-s).
\end{align*}

In order to obtain \eqref{AN3}, we perform a different estimation in the first integral term in \eqref{eq:refo}.
Notice that, by (\ref{FV4}) and then~\eqref{maj_diff_BW1}, we have 
\begin{align*}
		{\mathbb{E}}\left\langle \frac{y_{s,r}}{\left\vert y_{s,t}\right\vert }%
	,b(X_1)-b(X_2)\right\rangle \leq& -\overline{b}{\mathbb{E}}[\vert
	y_{s,r}\vert] +{\mathbb{E}}\left\langle \frac{y_{s,r}}{\left\vert
		y_{s,t}\right\vert },b(X^{1})-b(x_{s,r}^{1})\right\rangle  \\&+{\mathbb{E}}%
	\left\langle \frac{y_{s,r}}{ \left\vert y_{s,t}\right\vert }%
	,b(x_{s,r}^{2})-b(X^{2})\right\rangle \\
	\leq& -\overline{b}{\mathbb{E}}[\vert y_{s,r}\vert ]+C_{b}({%
		\mathbb{E}}[\vert x_{s,r}^{1}-X^{1}\vert]+{\mathbb{E}}[\vert
	x_{s,r}^{2}-X^{2}\vert] ) \\
	\leq& -\overline{b}{\mathbb{E}}[\vert y_{s,r}\vert] +C(1+{\mathbb{E%
	}}[\vert X^{1}\vert] +{\mathbb{E}}[\vert X^{2}\vert]
	)(r-s).
\end{align*}
We put the above estimate together with \eqref{eq:refo1} in \eqref{eq:refo} and get 
\begin{align*}
&{\mathbb{E}}[\vert y_{s,t}\vert] \\ &\le {\mathbb{E}}[\vert
X^{1}-X^{2}\vert] (1+2Q(t-s))- \overline{b}\int_{s}^{t}{\mathbb{E}}%
\vert y_{s,r}\vert dr+C(1+{\mathbb{E}}[\vert X^{1}\vert]
+{\mathbb{E}}[\vert X^{2}\vert] )(t-s)^{2} \\
&\leq \left( {\mathbb{E}}[\vert
X^{1}-X^{2}\vert] (1+(2Q-\bar{b})(t-s))+C(1+{\mathbb{E}}[\vert
X^{1}\vert] +{\mathbb{E }}[\vert X^{2}\vert] )(t-s)^{2}
\right).
\end{align*}
This proves that $\Theta$ is $(1,b_*,1)$-self-coupled. We conclude by using Lemma~\ref{lem_coupling} and~\eqref{eq_sew}, the latter being given by~\eqref{sewing_Boltzmann_W1}.
\end{proof}


 
\appendix

\section{Proof of Lemma~\protect\ref{lem_tech}}
\label{sec:app}

\textbf{Proof of A.} We first suppose that $b>\varpi(\pi
)\varepsilon $ and we prove that $\sigma _{b,\varepsilon }(n)\leq
C_{b,\varepsilon }\gamma _{n}^{\varepsilon }.$
Let $\zeta >0$ be such that $(\varpi (\pi )+\zeta )\varepsilon <b$. Notice
first that by the definition of $\varpi (\pi )$, there exists $n_{0}$ such
that for $n\geq n_{0}$, have 

\begin{equation}
\frac{\gamma _{n}}{\gamma _{n+1}}\leq 1+(\varpi (\pi )+\zeta )\gamma
_{n+1}\leq e^{(\varpi (\pi )+\zeta )\gamma _{n+1}}.  \label{Ap1}
\end{equation}%
Since $n_{0}$\ depends on the sequence $(\gamma _{n})_{n\in \N}$, so does the
constant $C_{n,\varepsilon }$ that we construct now. \ We prove that the
sequence $v_{n}:=\sigma _{b,\varepsilon }(n)\times \gamma _{n}^{-\varepsilon
}$ is bounded. We have the recurrence relation for $n\in {\mathbb{N}}$ (note
that $v_{0}=0$) 
\begin{equation*}
v_{n+1}=\theta _{n}v_{n}+\gamma _{n+1}\quad \text{ with }\quad \theta _{n}=%
\frac{\gamma _{n}^{\varepsilon }}{\gamma _{n+1}^{\varepsilon }}\times
e^{-b\gamma _{n+1}}.
\end{equation*}%
Using (\ref{Ap1}), we get $v_{n+1}\leq e^{(\varepsilon (\varpi (\pi )+\zeta
)-b)\gamma _{n+1}}v_{n}+\gamma _{n+1}$ for $n\geq n_{0}$ and thus 
\begin{equation*}
e^{(b-\varepsilon (\varpi (\pi )+\zeta ))t_{n+1}}v_{n+1}\leq
e^{(b-\varepsilon (\varpi (\pi )+\zeta ))t_{n}}v_{n}+e^{(b-\varepsilon
(\varpi (\pi )+\zeta ))t_{n+1}}\gamma _{n+1}.
\end{equation*}%
Using recursively this inequality, we obtain for $n>n_{0}$ 
\begin{align*}
&e^{(b-\varepsilon (\varpi (\pi )+\zeta ))t_{n}}v_{n} \leq e^{(b-\varepsilon
(\varpi (\pi )+\zeta
))t_{n_{0}}}v_{n_{0}}+\sum_{i=n_{0}}^{n-1}e^{(b-\varepsilon \varpi (\pi
)+\zeta )t_{i+1}}\times \gamma _{i+1} \\
&\leq e^{(b-\varepsilon (\varpi (\pi )+\zeta
))t_{n_{0}}}v_{n_{0}}+e^{(b-\varepsilon (\varpi (\pi )+\zeta
))}\sum_{i=n_{0}}^{n-1}e^{(b-\varepsilon (\varpi (\pi )+\zeta ))t_{i}}\times
\gamma _{i+1} \\
&\leq e^{(b-\varepsilon (\varpi (\pi )+\zeta
))t_{n_{0}}}v_{n_{0}}+e^{(b-\varepsilon (\varpi (\pi )+\zeta
))}\int_{t_{n_{0}}}^{t_{n}}e^{(b-\varepsilon (\varpi (\pi )+\zeta ))s}ds \\
&=e^{(b-\varepsilon (\varpi (\pi )+\zeta
))t_{n_{0}}}v_{n_{0}}+e^{(b-\varepsilon (\varpi (\pi )+\zeta ))}\times \frac{%
e^{(b-\varepsilon (\varpi (\pi )+\zeta ))t_{n}}-e^{(b-\varepsilon (\varpi
(\pi )+\zeta ))t_{n_{0}}}}{b-\varepsilon (\varpi (\pi )+\zeta )},
\end{align*}%
where we have used that $b-\varepsilon (\varpi (\pi )+\zeta )>0$ and $\gamma
_{i+1}\in (0,1)$ for $i\geq 0$. We finally get 
\begin{equation*}
v_{n}\leq v_{n_{0}}+\frac{e^{(b-\varepsilon (\varpi (\pi )+\zeta ))}}{%
b-\varepsilon (\varpi (\pi )+\zeta )},
\end{equation*}%
and thus the boundedness of the sequence $(v_{n})_{n\in \N}.$

Suppose now $b\leq \varpi(\pi )\varepsilon$ and thus $\varpi(\pi )>0$. We take $\lambda <
\frac{b}{\varpi(\pi )}\leq \varepsilon .$ We have $b>\varpi(\pi )\lambda $ so that, in view of the previous step, \ $\sigma
_{b,\varepsilon }(n)\leq \sigma _{b,\lambda }(n)\leq C_{b,\lambda
} \gamma_n^{\lambda }.$ So \textbf{A} is proved.

\textbf{Proof of B. } We now have $\gamma _{n}=\frac{1}{%
n+h^{-1}}.$ Clearly $\varpi (\pi )=1$ and, if $b>\varepsilon $ we use the
result from the point {\bf A} directly.  Suppose now the $b\leq \varepsilon
.$ One also has 
\begin{equation*}
\ln \left(\frac{n+h^{-1}}{k+h^{-1}}\right) = \int_{k}^{n}\frac{dx}{x+h^{-1}}\leq
\sum_{i=k}^{n-1}\gamma _{i}=t_{n-1}-t_{k-1}
\end{equation*}%
which gives%
\begin{equation*}
e^{-b(t_{n}-t_{k})}\leq \frac{(k+1+h^{-1})^{b}}{(n+1+h^{-1})^{b}}
\end{equation*}%
and consequently%
\begin{align*}
\sigma _{b,\varepsilon }(n)=\sum_{k=1}^{n}e^{-b(t_{n}-t_{k})}\gamma
_{k}^{1+\varepsilon } &\le \frac{1}{(n+1+h^{-1})^{b}}\sum_{k=1}^{n}(k+1+h^{-1})^{b} \gamma
_{k}^{1+\varepsilon} \\& \le \frac{(1+h^{-1})^b}{(n+1+h^{-1})^{b}}\sum_{k=1}^{n}\gamma
_{k}^{1+\varepsilon -b}.
\end{align*}%
If $b<\varepsilon $ the above quantity is bounded by $Cn^{-b}$
and if $b=\varepsilon $, it is upper bounded by $Cn^{-b}\ln (n+1)$. This gives the first upper bound.

We have 
\begin{equation*}
t_{n}-s=\sum_{i=1}^{n}\frac{1}{i+h^{-1}}\leq \int_{h^{-1}}^{n+h^{-1}}\frac{dx%
}{x}=\ln (n+h^{-1})-\ln h^{-1}
\end{equation*}%
so that 
\begin{equation}
h e^{-(t_{n}-s)}\geq \frac{1}{n+h^{-1}}  \label{A3}
\end{equation}%
which also gives, for $n\geq h^{-1},$%
\begin{equation*}
\frac{1}{n}=\frac{1}{n+h^{-1}}\times \frac{n+h^{-1}}{n}\leq
2h e^{-(t_{n}-s)}\leq 2e h e^{-(t-s)},
\end{equation*}%
since $0\le t-t_n\le t_{n+1}-t_n\le 1$. 
Then, if $b\neq \varepsilon $ one has 
\begin{equation*}
\sigma _{b,\varepsilon }(n)\leq C_{b,\varepsilon }(\frac{2e}{h})^{b\wedge
\varepsilon }e^{-b\wedge \varepsilon (t-s)}.
\end{equation*}%
If $b=\varepsilon $ we have to multiply with $\ln (1+n)\leq C'
(1+(t-s))$.
$\square $

\section{Rates of convergence for the Ornstein-Uhlenbeck process}\label{App_OU}

	\blue{Let us consider the Ornstein-Uhlenbeck process~\eqref{OU_exactS} and its Euler-Maruyama approximation~\eqref{OU_euler}. The goal of this appendix is to prove the coupling properties.\\
	We first prove that $\Theta^1$ and $\Theta^2$ are coupled, and first show the estimate~\eqref{AN1} when $X^1=x^1$ and $X^2=x^2$ are deterministic. In this case, the optimal $W_p$ coupling between $X^1_{s,t}(x^1)$ and $X^1_{s,t}(x^2)$ is given by $(x^1 e^{-k(t-s)}+\sigma \sqrt{\frac{1-e^{-2k(t-s)}}{2k}}G, x^2 (1-k(t-s)) +\sigma \sqrt{t-s} G )$ where $G\sim \mathcal{N}(0,1)$. Therefore,
\begin{align*}
&W_p^p(\Theta_{s,t} ^{1}(\delta_{x^1}),\Theta_{s,t} ^{2}(\delta_{x^2}) )\\
& =\mathbb{E} \left[  \left| x^1 e^{-k(t-s)}- x^2 (1-k(t-s)) +\sigma \left( \sqrt{\frac{1-e^{-2k(t-s)}}{2k}} -\sqrt{t-s}\right)G   \right|^p\right].
\end{align*} 
We use the following inequality for $p\ge 2$, $a,b \in \R$,
$$|a+b|^p\le |a|^p +p |a|^{p-2} a b + C_p (|a|^{p-2} b^2 + |b|^p),$$
where $C_p$ is a real constant depending only on $p$, with $a=x^1 e^{-k(t-s)}- x^2 (1-k(t-s))$ and $b= \sigma \left( \sqrt{\frac{1-e^{-2k(t-s)}}{2k}} -\sqrt{t-s}\right)G $. We get
$$W_p^p(\Theta_{s,t} ^{1}(\delta_{x^1}),\Theta_{s,t} ^{2}(\delta_{x^2}) ) \le |a|^p +C'\left(|a|^{p-2} (t-s)^3 +(t-s)^{3p/2}  \right), $$
since $b=O((t-s)^{3/2})$ as $t-s \to 0$. We write $a=a'+b'$ with $a'=(x^1-x^2)e^{-k(t-s)}$ and $b'=x^2  (e^{-k(t-s)}-(1-k(t-s)))$ and apply the same inequality to get
$$|a|^p\le |x^1-x^2|^pe^{-kp (t-s)}+ \tilde{C}_p(|a'|^{p-1}|b'|+|b'|^p).$$
Since $|b'| \le C |x^2|(t-s)^2$, we get by Young's inequality
$$|a'|^{p-1}|b'| \le \frac{(p-1)|a'|^p}{p}(t-s)^{\eta' \frac{p}{p-1} } + p |x^2|^p (t-s)^{(2-\eta')p}. $$
We take $\eta'$ such that $\eta' \frac{p}{p-1}>1$, i.e. $\eta'> \frac{p-1}{p}$, and get
$$|a|^p \le |x^1-x^2|^p(1-b_*(t-s)) + C(1+|x^2|^p)(t-s)^{(2-\eta')p},$$ 
for $t-s$ small enough and $b_*\in(0,pk)$, using that $e^{-pk (t-s)}\le (1- b_*(t-s))$ for $t-s$ sufficiently small. Note that $(2-\eta')p<p+1$.
In a similar fashion,  we have by Young's inequality for $\eta>0$ (this step is useless for the case $p=2$), 
$$|a|^{p-2} (t-s)^\eta (t-s)^{3-\eta}\le \frac{(p-2)|a|^p}{p}(t-s)^{\frac{p}{p-2}\eta} + \frac{p}{2} (t-s)^{(3-\eta) \frac {p}{2}},$$
and thus 
$$W_p^p(\Theta_{s,t} ^{1}(\delta_{x^1}),\Theta_{s,t} ^{2}(\delta_{x^2}) ) \le |a|^p (1+C_1 (t-s)^{\frac{p}{p-2}\eta}) + C_2(t-s)^{(3-\eta)p/2}. $$
We chose $\eta$ to have $\frac{p}{p-2}\eta>1$ so that the term of order~$1$ in $(t-s)$ of $|a|^p$ remains unchanged by the multiplication by $(1+C_1 (t-s)^{\frac{p}{p-2}\eta})$. Note that $(3-\eta)p/2<p+1$. Taking $\eta$ (resp. $\eta'$) arbitrarily close to $\frac{p-2}{p}$ (resp. $\frac{p-1}{p}$), we finally get, for any $b_*\in(0,kp)$ and $\varepsilon<p$, the existence of $h>0$ that do not depend on $(x_1,x_2)
$ such that for $t-s \in (0,h)$, $$ W_p^p(\Theta_{s,t} ^{1}(\delta_{x^1}),\Theta_{s,t}^{2}(\delta_{x^2}) )\le  |x^1-x^2|^p(1-b_*(t-s)) + C (1+|x^2|^p) (t-s)^{1+\varepsilon}.   $$
 Now, if we take $\mu^1,\mu^2 \in \mathcal{P}_p(\mathbb{R})$, $\pi$ an optimal $W_p$-coupling between $\mu^1$ and $\mu^2$, we have $$W_p^p(\Theta_{s,t} ^{1}(\mu^1),\Theta_{s,t} ^{2}(\mu^2) )\le \int_{\mathbb{R}^2}W_p^p(\Theta_{s,t} ^{1}(\delta_{x^1}),\Theta_{s,t} ^{2}(\delta_{x^2}) )  \pi(dx^1,dx^2), $$  and therefore
$$W_p^p(\Theta_{s,t} ^{1}(\mu^1),\Theta_{s,t} ^{2}(\mu^2) ) \le W_p^p( \mu^1, \mu^2) (1-b_*(t-s)) +C \Gamma_p(\mu^1,\mu^2)(t-s)^{1+\varepsilon}.  $$ 
Thus, $\Theta^1$ and $\Theta^2$ are $(p,b_*,\varepsilon)$ coupled for any $b_*\in (0,kp)$ and $\varepsilon \in (0,p)$. \\
Let us prove now that $\Theta^1$ is self coupled.     
  We have  $$X^1_{s,t}(X^1)=X^1e^{-k(t-s)}+\sigma \int_s^t e^{-k(t-u)} dW_u$$ and thus  $W_p^p(\theta_{s,t}(\mu^1),\theta_{s,t}(\mu^2))\le W_p^p(\mu^1,\mu^2)e^{-pk(t-s)}$. This inequality is an equality when $\mu^1$ and $\mu^2$ are Dirac masses, so we cannot get a better rate. Now, for any $0<b_*<k$, and $h$ sufficiently small, we have $e^{-pk(t-s)}\le (1-pb_*(t-s))$ for $t-s\in (0,h)$: we get that $\theta$ is $(p,pb_*,\varepsilon)$ self-coupled for any $\varepsilon>0$.
  Applying Theorem~\ref{th:main} to $\theta=\Theta^1$ and $\Theta=\Theta^2$ with $\gamma_n=\frac{1}{k+n}$, we get\begin{equation}\label{cv_OU_rate}W_p^p(\theta_{s,t}(\mu),\nu)\le A e^{-pb_*(t-s)},\end{equation}
\begin{equation}\label{cv_OU_example}W_{p}^{p}(\Theta _{s,t_{n}}^{\pi }(\mu ),\nu )\leq A n^{- \zeta p  },  \end{equation}
for any $\zeta< k \wedge 1$, where $t_n=s +\sum_{i=1}^n \frac {1}{k+i}$.\\
  We now discuss the sharpness of these rates of convergence. 
We recall that for $x_0\in \R$, $\sigma_1,\sigma_2>0$,  $(x_0+\sigma_1 G,\sigma_2 G)$ with $G\sim \mathcal{N}(0,1)$ is an optimal coupling for $W_p$ between $\mathcal{N}(x_0,\sigma_1^2)$ and $\mathcal{N}(0,\sigma_2^2)$. As $\nu\sim  \mathcal{N}(0,\frac {\sigma^2} {2k})$ is the invariant measure, we get  using Jensen's inequality
$$W_{p}^{p}(\theta_{s,t}(\delta_{x_0} ), \nu)\ge |x_0|^p e^{-pk(t-s)}, $$ 
so that~\eqref{cv_OU_rate} is almost optimal. \\
We now turn to the second inequality~\eqref{cv_OU_example}.  By elementary calculations, we have  (here, we interpret $\prod_{i=n+1}^n\equiv 1$) $$\Theta_{s,t_{n}}^{\pi }(\delta_{x_0} )=\mathcal{N}\left(x_0 \prod_{i=1}^n(1-k\gamma_i), \sigma^2 \sum_{i=1}^n \prod_{\ell=i+1}^n (1-k\gamma_\ell )^2 \gamma_i \right).$$ Then repeating the arguments for Gaussian random variables described above we have
$$W_{p}^{p}( \Theta_{s,t_{n}} ^{\pi} (\delta_{x_0} ) ,\nu ) \ge |x_0|^p \prod_{i=1}^n(1-k \gamma_i)^p .$$ 		
We have $1-k\gamma_i=\frac{i}{i+k}$ and $\prod_{i=1}^n(i+k)=\frac{\Gamma(n+1+k)}{\Gamma(1+k)}$, which gives
$$\prod_{i=1}^n(1-k \gamma_i)^p =\left( \frac{\Gamma(1+k) \Gamma(n+1)}{\Gamma(n+1+k)} \right)^p \sim c n^{-kp},$$
for some $c>0$.
We check also easily that $W_{p}^{p}( \Theta_{s,t_{n}} ^{\pi} (\delta_{x_0} ) ,\nu ) \ge W_{p}^{p}( \Theta_{s,t_{n}} ^{\pi} (\delta_{0} ) ,\nu )$ and, by some calculations, 
$$W_{p}^{p}( \Theta_{s,t_{n}} ^{\pi} (\delta_{0} ) ,\nu ) =\sigma^p \mathbb{E}[|G|^p] \left|\sqrt{\sum_{i=1}^n \prod_{\ell=i+1}^n (1-k\gamma_\ell )^2 \gamma_i } -\sqrt{\frac{1}{2k} } \right|^p \sim c' n^{-p}, $$
for some constant $c'>0$. Therefore, taking into account the above two lower estimates, we obtain $$ A_1 n^{-(k\wedge 1)p}\le W_{p}^{p}( \Theta_{s,t_{n}} ^{\pi} (\delta_{x_0} ) ,\nu ) \le A_2 n^{-(k\wedge 1)p}. $$
The upper bound follows by the triangle inequality, $$W_{p}( \Theta_{s,t_{n}} ^{\pi} (\delta_{x_0} ) ,\nu )\le W_{p}( \Theta_{s,t_{n}} ^{\pi} (\delta_{x_0} ) ,\Theta_{s,t_{n}} ^{\pi} (\delta_{0} ) ) +W_{p}( \Theta_{s,t_{n}} ^{\pi} (\delta_{0} ) ,\nu )$$ and $W_{p}^p( \Theta_{s,t_{n}} ^{\pi} (\delta_{x_0} ) ,\Theta_{s,t_{n}} ^{\pi} (\delta_{0} ) )= |x_0|^p \prod_{i=1}^n(1-k \gamma_i)^p $. As a consequence, \eqref{cv_OU_example} is almost sharp. This shows that Theorem~\ref{th:main} may lead to sharp estimates. Of course, the difficulty is however  to show the coupling properties with the best constants $b_*$ and $\varepsilon$.}

\bibliographystyle{alpha}
\bibliography{biblio}

\newpage
	
	\section{Additional technical proofs for some applications}\label{App_additional}

\subsection{A stochastic equation with reflection in $ W_2 $: Proof of Theorem~\ref{thm_Reflected2}}

\label{App_refl}

We check the conditions in Theorem \ref{stochastic} in two situations: on
the one hand to get that $\mathcal{X}^1 $ is self-coupled, and on the other
hand to obtain that $\mathcal{X}^1 $ is coupled with $\mathcal{X}^2 $, where 
$\mathcal{X}^1 $ and $\mathcal{X}^2 $ are defined respectively by~%
\eqref{eq:ref1} and~\eqref{eq:refX2}. In the present case, the coupling is the simplest one, which
is given by the strong solution of the corresponding reflected equation
driven by the same Brownian motion.

Furthermore, note that from the contraction property \eqref{1d}, we obtain
that for every $\delta>0 $, 
\begin{align}
xb(x)\leq& (-\bar{b}+\delta)|x|^2+\delta^{-1}b(0)^2. \label{a1}
\end{align}
First, we give a lemma with some moment estimates.

\begin{lemma}
\label{lem:3.2}Assume that $2\bar{b}>C_\sigma $, then there exists a
positive constant $C $ that depends on $C_b $, $C_\sigma $, $ b(0) $ and $ \sigma(0) $ such that for 
$s\le t\le s+1$ 
\begin{align}
{\mathbb{E}}[(\mathcal{X}_{s,t}^{i}(X^{i})-X^{i})^{2}]\leq& C(1+{\mathbb{E}}%
[(X^{i})^{2}])(t-s),  \label{1a} \\
\left\vert {\mathbb{E}}[\mathcal{X}_{s,t}^{i}(X^{i})^{2}]-{\mathbb{E}}%
[(X^{i})^{2}]\right\vert \leq& C(1+{\mathbb{E}}[(X^{i})^{2}])(t-s).
\label{1c}
\end{align}
Besides, (\ref{AN2BIS}) is satisfied with  $p=2 $, $%
\varepsilon^{\prime }=1$ for $\mathcal{X}_{s,t}^{i}(X^{i}) $
 and  $\mathcal{X}_{s,t}^{1}(X^{i}) $, $i=1,2 $ .
\end{lemma}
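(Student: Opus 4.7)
The plan is to read off everything from It\^o's formula applied to $x \mapsto x^2$. Since the local time $L^i$ only increases when $\mathcal{X}^i_{s,u} = 0$, the contribution $\int_s^t \mathcal{X}^i_{s,u}\,dL^i_{s,u}$ vanishes identically, and taking expectations (the Brownian martingale being centered once we verify integrability) yields
\begin{align*}
\mathbb{E}[(\mathcal{X}^i_{s,t})^2] - \mathbb{E}[(X^i)^2] = 2\mathbb{E}\int_s^t \mathcal{X}^i_{s,u}\, b_i(u)\, du + \mathbb{E}\int_s^t \sigma_i(u)^2\, du,
\end{align*}
where $(b_i,\sigma_i)(u) = (b,\sigma)(\mathcal{X}^1_{s,u})$ for $i=1$ and $(b,\sigma)(X^2)$ for $i=2$. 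The contraction bound \eqref{a1} combined with $\sigma(x)^2 \le (1+\eta)C_\sigma x^2 + (1+\eta^{-1})\sigma(0)^2$ controls the right hand side linearly in $\mathbb{E}[(\mathcal{X}^1_{s,u})^2]$, with coefficient $-2\bar b + C_\sigma + O(\delta + \eta)$; since $2\bar b > C_\sigma$ by assumption, Gronwall's lemma over $[s,t]\subset[s,s+1]$ yields the uniform moment bound $\mathbb{E}[(\mathcal{X}^1_{s,t})^2] \le C(1+\mathbb{E}[(X^1)^2])$. Plugging this back into the displayed identity directly gives \eqref{1c} for $i=1$. For $i=2$ the same two-step procedure works after splitting $\mathbb{E}[\mathcal{X}^2_{s,u} b(X^2)] = \mathbb{E}[X^2 b(X^2)] + \mathbb{E}[(\mathcal{X}^2_{s,u} - X^2)b(X^2)]$ and estimating the second term by Cauchy--Schwarz using \eqref{1a}.

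The key estimate is therefore \eqref{1a}, which requires controlling the local time $L^i_{s,t}$. I would use the Skorokhod representation: writing $Z^i_u := X^i + \int_s^u b_i dr + M^i_{s,u}$ for the unreflected part (with $M^i$ the Brownian integral), one has $L^i_{s,t} = -\min\bigl(0,\inf_{s\le u\le t} Z^i_u\bigr)$, so that since $X^i\ge 0$,
\begin{equation*}
L^i_{s,t} \le \sup_{s\le u\le t}\bigl|Z^i_u - X^i\bigr| \le \int_s^t |b_i|\, dr + \sup_{s\le u\le t}|M^i_{s,u}|.
\end{equation*}
Squaring, applying Jensen to the drift part and Doob's maximal inequality together with It\^o's isometry to the martingale part, and invoking the moment bound already proved gives $\mathbb{E}[(L^i_{s,t})^2] \le C(1+\mathbb{E}[(X^i)^2])(t-s)$. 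Treating the drift and martingale increments analogously and decomposing $\mathcal{X}^i_{s,t}-X^i = \int_s^t b_i\,dr + M^i_{s,t} + L^i_{s,t}$ then yields \eqref{1a}.

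To obtain \eqref{AN2BIS}, I would return to the energy identity and substitute $\mathbb{E}[(\mathcal{X}^1_{s,u})^2] = \mathbb{E}[(X^1)^2] + R(u)$ with $|R(u)|\le C(1+\mathbb{E}[(X^1)^2])(u-s)$ by \eqref{1c}; the linear coefficient $(-2\bar b + C_\sigma + O(\delta+\eta))$ produces the contraction factor $(1-\hat b(t-s))$ with $\hat b$ any number in $(0, 2\bar b - C_\sigma)$, while the remainder $\int_s^t R(u) du$ contributes the $(t-s)^{1+\varepsilon'}$ term with $\varepsilon'=1$. The same procedure works for $\mathcal{X}^2$, the only arithmetic change being that the Cauchy--Schwarz step above forces $\varepsilon'=1/2$, which via Remark~\ref{rk_AN2BIS} is still enough to deliver the Foster--Lyapunov condition and hence $p$-boundedness. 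The main obstacle is genuinely the local time: once it is tamed by the Skorokhod bound, the rest is a routine chain of Gronwall, It\^o isometry and Cauchy--Schwarz manipulations.
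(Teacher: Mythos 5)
Your argument is essentially sound, and it reaches the two preliminary estimates by a route that differs from the paper's at exactly one point: the treatment of the local time. The paper never estimates $L^i$ at all; it applies It\^o's formula directly to $(\mathcal{X}^i_{s,t}-X^i)^2$ and observes that the reflection term equals $2\int_s^t(\mathcal{X}^i_{s,u}-X^i)\,dL^i_{s,u}=-2\int_s^t X^i\,dL^i_{s,u}\le 0$ (since $dL^i$ is carried by $\{\mathcal{X}^i_{s,u}=0\}$ and $X^i\ge 0$), after which Lipschitz bounds, the moment bound and Gronwall give \eqref{1a} in two lines. Your Skorokhod-map bound $L^i_{s,t}=\sup_{s\le u\le t}(-Z^i_u)^+\le \sup_{s\le u\le t}|Z^i_u-X^i|$, combined with Doob's maximal inequality and the It\^o isometry, is correct (the Skorokhod identity is a pathwise statement, so it applies even though the coefficients of $Z^1$ depend on the reflected solution itself), and it buys an extra piece of information, namely $\mathbb{E}[(L^i_{s,t})^2]\le C(1+\mathbb{E}[(X^i)^2])(t-s)$, at the price of a maximal inequality the paper avoids. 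The energy identity, the uniform moment bound (which the paper obtains from $2ab\le a^2+b^2$, the Lipschitz property and Gronwall alone, without invoking $2\bar b>C_\sigma$; your contractive version is also fine under the stated hypothesis) and \eqref{1c} are handled as in the paper.

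The one place where you fall short of the statement is the Foster--Lyapunov estimate for the scheme $\mathcal{X}^2$: the lemma asserts \eqref{AN2BIS} with $\varepsilon'=1$ for both $i=1,2$, while your Cauchy--Schwarz step only produces a remainder of order $(t-s)^{3/2}$, i.e. $\varepsilon'=1/2$. This loss is not forced: bound the cross term by Young's inequality, $2(\mathcal{X}^2_{s,u}-X^2)b(X^2)\le \delta^{-1}(\mathcal{X}^2_{s,u}-X^2)^2+\delta\, b(X^2)^2$, integrate in $u$ and use \eqref{1a} to control $\delta^{-1}\int_s^t\mathbb{E}[(\mathcal{X}^2_{s,u}-X^2)^2]du\le \delta^{-1}C(1+\mathbb{E}[(X^2)^2])(t-s)^2$, while the term $\delta\,\mathbb{E}[b(X^2)^2](t-s)$ is absorbed into the contraction coefficient by taking $\delta$ small; this recovers $\varepsilon'=1$, exactly as in the paper. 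Your weaker exponent would still yield the Foster--Lyapunov condition via Remark~\ref{rk_AN2BIS} and would not alter the rates in Theorem~\ref{thm_Reflected2} (those are governed by the coupling exponent of \eqref{AN3}, not by \eqref{AN2BIS}), but as written it does not prove the literal claim $\varepsilon'=1$ for $\mathcal{X}^2$.
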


\begin{proof}
Using It\^o's formula and the fact that $\mathcal{X}%
_{s,u}^{i}(X^{i})dL^i_{s,u}=0$ for $i\in \{1,2\}$, we obtain 
\begin{align}  \label{mom2_X1}
{\mathbb{E}}[\mathcal{X}_{s,t}^{1}(X^{1})^{2}] &{=}{\mathbb{E}}%
[(X^{1})^{2}]+ 2\int_{s}^{t}
{\mathbb{E}}[\mathcal{X} _{s,u}^{1}(X^{1})b(%
\mathcal{X}_{s,u}^{1}(X^{1}))]du+\int_{s}^{t}{\mathbb{E}}[\sigma ( \mathcal{X%
}_{s,u}^{1}(X^{1}))^{2}]du \\
{\mathbb{E}}[\mathcal{X}_{s,t}^{2}(X^{2})^{2}] &{=}{\mathbb{E}}%
[(X^{2})^{2}]+ 2\int_{s}^{t}{\mathbb{E}}[\mathcal{X}
_{s,u}^{2}(X^{2})b(X^{2})]du+\int_{s}^{t}{\mathbb{E}}[\sigma ( X^{2})^{2}]du.
\end{align}
Using the inequality $2ab\leq a^2+b^2 $ then the Lipschitz properties in  \eqref{LIP_example1} together with 
Gronwall inequality, we get that there exists $C$ depending on $C_b$, $%
C_\sigma$, $|b(0)|$ and $|\sigma(0)|$ such that 
\begin{equation}  \label{mom2}
{\mathbb{E}}[\mathcal{X}_{s,t}^{i}(X^{i})^{2}] \le C(1+{\mathbb{E}}%
[(X^i)^2]) , \ s\le t \le s+1, \ i \in \{1,2 \}.
\end{equation}

In the same way, we get from It\^o's formula, the inequality $2ab\leq
a^2+b^2 $, $(\mathcal{X}_{s,t}^{i}(X^{i})-X^{i})dL_{s,t}^{i}=
-X^{i}dL_{s,t}^{i}\leq 0$ and~\eqref{mom2}: 
\begin{equation*}
{\mathbb{E}}[(\mathcal{X}_{s,t}^{i}(X^{i})-X^{i})^{2}]{\leq }\int_{s}^{t}{%
\mathbb{E}}[(\mathcal{X} _{s,u}^{i}(X^{i})-X^{i})^{2}]du+ C{\mathbb{E}}%
[1+|X^{i}|^{2}](t-s),
\end{equation*}
where $C $ depends on $C_b$, $C_\sigma$, $|b(0)|$ and $|\sigma(0)|$.
Applying Gronwall's inequality one obtains the preliminary estimate %
\eqref{1a}. In a similar fashion, we get~\eqref{1c}.
\end{proof}
\begin{lemma}
	$ \theta $ and $ \Theta $ satisfy the $ 2 $-Foster-Lyapunov condition.
\end{lemma}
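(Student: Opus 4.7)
The plan is to derive condition~\eqref{AN2BIS} for both $\theta$ and $\Theta$, from which the Foster--Lyapunov condition~\eqref{AN2} follows via Remark~\ref{rk_AN2BIS}. Applying It\^o's formula to $(\mathcal{X}^i_{s,t}(X^i))^2$ for $i=1,2$ and using that $\mathcal{X}^i_{s,u}(X^i)\,dL^i_{s,u}=0$, exactly as already performed in~\eqref{mom2_X1}, yields
\[
\mathbb{E}[(\mathcal{X}^i_{s,t}(X^i))^2] = \mathbb{E}[(X^i)^2] + \int_s^t \bigl(2\,\mathbb{E}[\mathcal{X}^i_{s,u}(X^i)\,\beta_i(u)] + \mathbb{E}[\sigma_i(u)^2]\bigr)\,du,
\]
where $(\beta_1(u),\sigma_1(u)) = (b(\mathcal{X}^1_{s,u}(X^1)),\sigma(\mathcal{X}^1_{s,u}(X^1)))$ and $(\beta_2(u),\sigma_2(u)) = (b(X^2),\sigma(X^2))$.

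The key pointwise bound combines the contraction inequality~\eqref{a1} with the Lipschitz estimate $\sigma(x)^2 \le (C_\sigma + \delta)\,x^2 + C(\delta)$, giving, for any $\delta > 0$,
\[
2xb(x) + \sigma(x)^2 \le -(2\bar{b} - C_\sigma - 3\delta)\,x^2 + C'(\delta).
\]
The assumption $2\bar{b} > C_\sigma$ allows one to choose $\delta$ so small that $\hat b := 2\bar{b} - C_\sigma - 3\delta > 0$, producing the contractive coefficient in front of $\|\mu\|_2^2$.

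For the flow $\theta$ ($i=1$), I would substitute this pointwise inequality into the It\^o identity and then use~\eqref{1c} to replace $\mathbb{E}[(\mathcal{X}^1_{s,u}(X^1))^2]$ by $\mathbb{E}[(X^1)^2]$ up to an error of order $C(1+\mathbb{E}[(X^1)^2])(u-s)$; after integration over $[s,t]$, this produces~\eqref{AN2BIS} with $p=2$ and $\varepsilon = 1$. For the Euler-type scheme $\Theta$ ($i=2$), one additionally splits $\mathcal{X}^2_{s,u}(X^2)\,b(X^2) = X^2 b(X^2) + (\mathcal{X}^2_{s,u}(X^2)-X^2)\,b(X^2)$: the first summand is controlled by the pointwise bound above, while the second is estimated by Cauchy--Schwarz together with~\eqref{1a} and the sublinear bound $\mathbb{E}[b(X^2)^2] \le C(1+\mathbb{E}[(X^2)^2])$. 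This costs a factor $(u-s)^{1/2}$, so that after integration one obtains~\eqref{AN2BIS} with $\varepsilon = 1/2$. In both cases Remark~\ref{rk_AN2BIS} then delivers~\eqref{AN2} for $t-s$ small enough, which is the 2-Foster--Lyapunov condition claimed.

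The only delicate point is the treatment of the frozen-coefficient crossed term in the $\Theta$ case: the Cauchy--Schwarz step forces $\varepsilon = 1/2$ rather than $\varepsilon = 1$, but this is harmless for the Foster--Lyapunov conclusion, which only requires \emph{some} $\varepsilon > 0$. Verifying that the local-time contributions indeed vanish in the It\^o expansion (thanks to the support property $\mathcal{X}^i_{s,u}\,dL^i_{s,u} = 0$) is the other point to be handled carefully, though it follows the same mechanism as in the derivation of~\eqref{mom2_X1}.
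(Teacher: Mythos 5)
Your proposal is correct and follows essentially the same route as the paper: It\^o's formula applied to the squared process, the pointwise bounds from~\eqref{a1} and the Lipschitz property of $\sigma$, the use of~\eqref{1c} for $\theta$, and the splitting $\mathcal{X}^2_{s,u}(X^2)b(X^2)=X^2b(X^2)+(\mathcal{X}^2_{s,u}(X^2)-X^2)b(X^2)$ for $\Theta$. The only (immaterial) difference is that the paper handles the cross term with Young's inequality $ab\le \delta a^2+\delta^{-1}b^2$ together with~\eqref{1a}, which yields~\eqref{AN2BIS} with $\varepsilon=1$ for $\Theta$ as well, whereas your Cauchy--Schwarz step only gives $\varepsilon=1/2$ --- still amply sufficient for the Foster--Lyapunov conclusion via Remark~\ref{rk_AN2BIS}.
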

\begin{proof}
We will prove the property (\ref{AN2BIS}) which is equivalent to \eqref{AN2}. From~\eqref{mom2_X1}, \eqref{a1} and 
$\sigma(x)^2\le (1+\delta)C_\sigma |x|^2+(1+\delta^{-1})\sigma(0)^2$, we get 
\begin{align*}
{\mathbb{E}}[\mathcal{X}_{s,t}^{1}(X^{1})^{2}] \leq& {\mathbb{E}}%
[(X^{1})^{2}]-(2(\bar{b}-\delta) -C_\sigma(1+\delta))\int_{s}^{t}{\mathbb{E}}%
[\mathcal{X}_{s,u}^{1}(X^{1})^{2}]du \\
& +\left(\delta^{-1}( \sigma ^{2}(0)+2b^{2}(0))+\sigma ^{2}(0)\right)(t-s) \\
{=}&{\mathbb{E}}[(X^{1})^{2}](1-(2\bar{b}-C_\sigma
-(C_\sigma+2)\delta)(t-s)) \\
&-(2\bar{b}-C_\sigma -(C_\sigma+2)\delta)\int_{s}^{t}({\mathbb{E}}[\mathcal{%
	X} _{s,u}^{1}(X^{1})^{2}]-{\mathbb{E}}[(X^{1})^{2}])du \\
&+\left(\delta^{-1}( \sigma ^{2}(0)+2b^{2}(0))+\sigma ^{2}(0)\right)(t-s) \\
\leq& {\mathbb{E}}[(X^{1})^{2}](1-(2\bar{b}-C_\sigma
-(C_\sigma+2)\delta)(t-s))\\
&+C(1+{\mathbb{E}}[(X^{1})^{2}])(t-s)^{2}+\left(\delta^{-1}( \sigma ^{2}(0)+2b^{2}(0))+\sigma ^{2}(0)\right)(t-s),
	\end{align*}
by using~\eqref{1c} for the last inequality. Now take  $\delta>0$ sufficiently
small so that $2\bar{b}-C_\sigma -(C_\sigma+2)\delta>0$. This is precisely (\ref{AN2BIS}) for $p=2 $ applied to $ \theta $ with 
 $\varepsilon=1$. 

A small variation of the above argument gives the result for ${\mathbb{E}}[%
\mathcal{X}_{s,t}^{2}(X^{2})^{2}] $. In fact, using It\^{o}'s formula and $%
\mathcal{X}_{s,u}^{2}(X^{2})dL^2_{s,u}=0$, we obtain using \eqref{a1}
\begin{align*}
{\mathbb{E}}[\mathcal{X}_{s,t}^{2}(X^{2})^{2}]{=}& {\mathbb{E}}%
[(X^{2})^{2}]+2\int_{s}^{t}{\mathbb{E}}[(\mathcal{X }%
_{s,u}^{2}(X^{2})-X^{2})b(X^{2})]du +2{\mathbb{E}}[X^{2}b(X^{2})](t-s) \\
& +(t-s){\mathbb{E}}[\sigma (X^{2})^{2}] \\
\leq & {\mathbb{E}}[(X^{2})^{2}] (1-2(\bar{b}-\delta )(t-s)) + (t-s){\mathbb{%
E}}[\sigma (X^{2})^{2}+\delta b (X^{2})^{2}] \\
&+\delta^{-1} \int_{s}^{t}{\mathbb{E}}[(\mathcal{X }%
_{s,u}^{2}(X^{2})-X^{2})^2]du +\delta^{-1}b(0)^2(t-s) \\
\leq & {\mathbb{E}}[(X^{2})^{2}] (1-(2\bar{b}-(2+2C_b+C_\sigma)\delta -C_\sigma
)(t-s))\\&+\delta^{-1} C (t-s)^{2} (1+{\mathbb{E}}[(X^{2})^{2}]),
\end{align*}
by using \eqref{1a} and the Lipschitz property of $b$ and $\sigma$.
Taking $\delta>0$ sufficiently small, we thus get (\ref{AN2BIS}) with $%
\varepsilon=1$.
\end{proof}

The next lemma shows that the condition \eqref{AN3} is satisfied.

\begin{lemma}
\label{lem:rfl2} Suppose that $2\bar{b}-C_\sigma>0 $ then \eqref{AN3} is
satisfied for $ p=2 $, $\varepsilon=1 $ and any $b_*\in(0,2\bar{b}%
-C_\sigma)$ for the following two pairs   $(\mathcal{X}_{s,t}^{1}(X^{1}) ,\mathcal{X}_{s,t}^{2}(X^{2}) )$ and $(\mathcal{X}%
_{s,t}^{1}(X^{1}),\mathcal{X}%
_{s,t}^{1}(X^{2})) $. The constant $C_* $ in \eqref{AN3} depends on $\bar{b} $, $C_b $, $C_\sigma $, 
$b(0) $ and $\sigma(0) $. 
\end{lemma}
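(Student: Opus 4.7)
The plan is to apply It\^o's formula to the square of the difference process in each of the two pairs, exploiting the coupling induced by the common Brownian motion $W$ and common initial couple $(X^1,X^2)$ drawn optimally for $W_2$.

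\textbf{Coupling of $\theta$ and $\Theta$.} Define $Y_{s,t}=\mathcal{X}^1_{s,t}(X^1)-\mathcal{X}^2_{s,t}(X^2)$. Writing the dynamics and applying It\^o's formula yields
\begin{align*}
\mathbb{E}[Y_{s,t}^2]=&\mathbb{E}[(X^1-X^2)^2]+2\int_s^t\mathbb{E}\bigl[Y_{s,u}\bigl(b(\mathcal{X}^1_{s,u}(X^1))-b(X^2)\bigr)\bigr]du\\
&+\int_s^t\mathbb{E}\bigl[(\sigma(\mathcal{X}^1_{s,u}(X^1))-\sigma(X^2))^2\bigr]du+2\int_s^t\mathbb{E}[Y_{s,u}(dL^1_{s,u}-dL^2_{s,u})].
\end{align*}
The local time contribution is nonpositive: indeed $Y_{s,u}dL^1_{s,u}=-\mathcal{X}^2_{s,u}(X^2)dL^1_{s,u}\le 0$ since $\mathcal{X}^1$ charges $L^1$ only at~$0$, and similarly $-Y_{s,u}dL^2_{s,u}=-\mathcal{X}^1_{s,u}(X^1)dL^2_{s,u}\le 0$. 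For the drift I split $b(\mathcal{X}^1_{s,u}(X^1))-b(X^2)=[b(\mathcal{X}^1_{s,u}(X^1))-b(\mathcal{X}^2_{s,u}(X^2))]+[b(\mathcal{X}^2_{s,u}(X^2))-b(X^2)]$; the first bracket is controlled by the contraction hypothesis~\eqref{1d}, while the second, via Cauchy--Schwarz and Lipschitz property of $b$, produces an error of the form $\delta^{-1}C_b\mathbb{E}[(\mathcal{X}^2_{s,u}(X^2)-X^2)^2]$. I deal with the diffusion term by the decomposition $\sigma(\mathcal{X}^1_{s,u}(X^1))-\sigma(X^2)=[\sigma(\mathcal{X}^1_{s,u}(X^1))-\sigma(\mathcal{X}^2_{s,u}(X^2))]+[\sigma(\mathcal{X}^2_{s,u}(X^2))-\sigma(X^2)]$ and $(a+b)^2\le (1+\delta')a^2+(1+\delta'^{-1})b^2$, which yields a main contribution $(1+\delta')C_\sigma\mathbb{E}[Y_{s,u}^2]$ and an error $(1+\delta'^{-1})C_\sigma\mathbb{E}[(\mathcal{X}^2_{s,u}(X^2)-X^2)^2]$.

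Gathering these and invoking the one step estimate~\eqref{1a} from Lemma~\ref{lem:3.2}, I obtain for $t-s\le 1$
\begin{equation*}
\mathbb{E}[Y_{s,t}^2]\le \mathbb{E}[(X^1-X^2)^2]+\int_s^t\bigl(-2\bar b+\delta+(1+\delta')C_\sigma\bigr)\mathbb{E}[Y_{s,u}^2]\,du+C_{\delta,\delta'}\,\Gamma_2(\mu^1,\mu^2)(t-s)^2.
\end{equation*}
Given $b_*<2\bar b-C_\sigma$, one fixes $\delta,\delta'>0$ so that $2\bar b-\delta-(1+\delta')C_\sigma>b_*$. To turn the integral into the desired factor $(1-b_*(t-s))$, I then write $\mathbb{E}[Y_{s,u}^2]=\mathbb{E}[(X^1-X^2)^2]+\bigl(\mathbb{E}[Y_{s,u}^2]-\mathbb{E}[(X^1-X^2)^2]\bigr)$, using again~\eqref{1a} and~\eqref{1c} to bound the remainder by $C\Gamma_2(\mu^1,\mu^2)(u-s)$, which upon integration contributes $O((t-s)^2)$. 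This produces exactly the form~\eqref{AN3} with $p=2$, $\varepsilon=1$.

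\textbf{Self-coupling of $\theta$.} For $(\mathcal{X}^1_{s,t}(X^1),\mathcal{X}^1_{s,t}(X^2))$ the argument is strictly simpler because no approximation error appears: letting $Z_{s,t}=\mathcal{X}^1_{s,t}(X^1)-\mathcal{X}^1_{s,t}(X^2)$, It\^o gives $\mathbb{E}[Z_{s,t}^2]\le \mathbb{E}[(X^1-X^2)^2]+\int_s^t(-2\bar b+C_\sigma)\mathbb{E}[Z_{s,u}^2]du$, the local time contributions being nonpositive by the same argument as above. Gr\"onwall yields $\mathbb{E}[Z_{s,t}^2]\le e^{-(2\bar b-C_\sigma)(t-s)}\mathbb{E}[(X^1-X^2)^2]$, and a Taylor expansion of the exponential gives \eqref{AN3} for any $b_*<2\bar b-C_\sigma$ with $\varepsilon=1$.

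The only mild technical obstacle is the careful tracking of the $\delta,\delta'$ trade-offs ensuring $b_*$ can be chosen anywhere in $(0,2\bar b-C_\sigma)$; all the analytic inputs (It\^o, local time sign, one-step moment bounds) are already in hand.
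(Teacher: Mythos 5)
Your proposal follows essentially the same route as the paper's proof in Appendix~\ref{App_refl}: It\^o's formula applied to the squared difference, the sign argument for the reflection terms (your justification $Y_{s,u}dL^1_{s,u}=-\mathcal{X}^2_{s,u}(X^2)dL^1_{s,u}\le 0$ is exactly the paper's $(\mathcal{X}^1_{s,u}(X^1)-\mathcal{X}^2_{s,u}(X^2))(dL^1_{s,u}-dL^2_{s,u})\le 0$), the splitting of the drift via the contraction hypothesis~\eqref{1d} plus a Lipschitz error controlled by~\eqref{1a}, the weighted Young inequality for the diffusion term, and finally the replacement of $\mathbb{E}[Y_{s,u}^2]$ by $\mathbb{E}[(X^1-X^2)^2]$ up to a remainder of order $(u-s)$, which after integration yields~\eqref{AN3} with $\varepsilon=1$. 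The self-coupling part is also as in the paper (there it is dismissed as "similar arguments"), and your observation that no approximation error appears there is correct.

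There is, however, one step whose justification as written does not deliver the claimed rate: you bound the remainder $\mathbb{E}[Y_{s,u}^2]-\mathbb{E}[(X^1-X^2)^2]$ by $C\,\Gamma_2(\mu^1,\mu^2)(u-s)$ "using again~\eqref{1a} and~\eqref{1c}". Writing $Y_{s,u}=(X^1-X^2)+(\mathcal{X}^1_{s,u}(X^1)-X^1)-(\mathcal{X}^2_{s,u}(X^2)-X^2)$ and using~\eqref{1a} through Cauchy--Schwarz, the cross term is only of order $\Gamma_2(\mu^1,\mu^2)\sqrt{u-s}$, and~\eqref{1c} does not control it either; so this route only gives a remainder of order $\sqrt{u-s}$, hence a final error $(t-s)^{3/2}$, i.e.\ $\varepsilon=1/2$ rather than the $\varepsilon=1$ asserted in the lemma (and needed for the rate in Theorem~\ref{thm_Reflected2}). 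The paper closes this step with the dedicated estimate~\eqref{1e}, namely $\left\vert \mathbb{E}[\mathcal{Y}_{s,u}^{2}]-\mathbb{E}[(X^{1}-X^{2})^{2}]\right\vert\le C\,\Gamma_2(\mu^1,\mu^2)(u-s)$, which is obtained by going back to the It\^o identity (your main display) and bounding the drift and diffusion integrands crudely by the second-moment bounds of Lemma~\ref{lem:3.2}, so that the correction is linear in $u-s$; note also that the direction you actually need (a lower bound on $\mathbb{E}[Y_{s,u}^2]$, since the coefficient in front of the integral is negative) is the one where the discarded nonpositive local-time contribution must be controlled, which Cauchy--Schwarz on~\eqref{1a} does not address. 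So the repair is exactly the paper's \eqref{1e}; with it inserted, your argument coincides with the paper's proof.
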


\begin{proof}
We proceed in a similar fashion as in the proof of Lemma \ref{lem:3.2}.
Define $\mathcal{Y}:= \mathcal{X} ^{1}(X^{1})-\mathcal{X} ^{2}(X^{2})$ which
satisfies, for $s\le t$, 
\begin{align*}
\mathcal{Y}_{s,t}=&X^1-X^2+\int_s^t \left(b(\mathcal{X}%
_{s,r}^{1}(X^{1}))-b(X^2)\right) dr +\int_s^t\left(\sigma(\mathcal{X}_{s,r}^{1}(X^{1}))-\sigma(X^2)\right)dW_r\\
&
+L^1_{s,t}-L^2_{s,t}.
\end{align*}
First, we prove the following preliminary estimates: 
\begin{align}
\left\vert {\mathbb{E}}[\mathcal{Y}_{s,t}^{2}]-{\mathbb{E}}[(X^{1}-X^{2})^{2}]%
\right\vert &\leq C(1+{\mathbb{E}}[(X^{1})^{2}]+{\mathbb{E}}%
[(X^{2})^{2}])(t-s),  \label{1e} \\
\left\vert {\mathbb{E}}[(\mathcal{X}_{s,t}^{1}(X^{1})-\mathcal{X}%
_{s,t}^{1}(X^{2}))^{2}]-{\mathbb{E}}[(X^{1}-X^{2})^{2}]\right\vert &\leq C(1+{%
\mathbb{E}}[(X^{1})^{2}]+{\mathbb{E}}[(X^{2})^{2}])(t-s).  \label{1f}
\end{align}
where the constant $C $ depends on $C_b $ and $C_\sigma $ as well as $b(0) $
and $\sigma(0) $. We give the argument for \eqref{1e}, leaving \eqref{1f}
for the reader. Using It\^o's formula and $(\mathcal{X} _{s,u}^{1}(X^{1})-%
\mathcal{X} _{s,u}^{2}(X^{2}))(dL_{s,u}^{1}-dL_{s,u}^{2})\leq 0$, we obtain for $t-s\leq 1 $ 
\begin{align}
{\mathbb{E}}[(\mathcal{Y}_{s,t})^{2}]{\leq}& {\mathbb{E}}[(X^{1}-X^{2})^{2}]
+2\int_{s}^{t}{\mathbb{E}}[\mathcal{Y}_{s,u}(b(\mathcal{X}%
_{s,u}^{1}(X^{1}))-b(X^{2}))]du  \notag \\
& +\int_{s}^{t}{\mathbb{E}}[(\sigma (\mathcal{X}_{s,u}^{1}(X^{1}))-\sigma
(X^{2}))^{2}]du .  \label{eq:free}
\end{align}

Therefore, using the the inequality $%
2ab\leq a^{2}+b^{2}$ for the second term and then Lipschitz properties of $b $ and $\sigma $ as well as
bounds for the $L^2 $-moments of $\mathcal{X}^i(X^i) $, $i=1,2 $ in Lemma %
\ref{lem:3.2} , \eqref{1e} follows.

In order to prove \eqref{AN3}, we use again \eqref{eq:free} but we apply the inequality  $%
ab\leq \delta a^{2}+\delta ^{-1}b^{2}$, the Lipschitz property of $ b $ and the
following decomposition using~\eqref{1d} 
\begin{align*}
2{\mathbb{E}}[\mathcal{Y}_{s,u}&(b(\mathcal{X}_{s,u}^{1}(X^{1}))-b(X^{2}))]%
\leq -2\bar{b}{\mathbb{E}}[\mathcal{Y}_{s,u}^2]+2{\mathbb{E}}[\mathcal{Y}%
_{s,u}(b(\mathcal{X}_{s,u}^{2}(X^{2}))-b(X^{2}))] \\
\leq& -(2\bar{b}-\delta){\mathbb{E}}[\mathcal{Y}_{s,u}^2]+C_b\delta^{-1} {%
\mathbb{E}}[(\mathcal{X}_{s,u}^{2}(X^{2})-X^{2})^2] \\
\leq & -(2\bar{b}-\delta){\mathbb{E}}[(X^1-X^2)^2]+C(\delta+\delta^{-1})(1+{%
\mathbb{E}}[(X^{1})^{2}]+{\mathbb{E}}[(X^{2})^{2}])(u-s).
\end{align*}
In the last inequality, we have used \eqref{1e} for the first term, and
Lemma \ref{lem:3.2} for the second one. Similarly, using the inequality $%
ab\leq \delta a^{2}+\delta ^{-1}b^{2}$ with \linebreak ${\mathbb{E}}[(\sigma (\mathcal{X%
}_{s,u}^{2}(X^{2}))-\sigma (X^{2}))^2]\le C(1+{\mathbb{E}}[(X^{2})^{2}])(u-s)$
by~\eqref{1a}, we get 
\begin{align*}
{\mathbb{E}}[(\sigma (\mathcal{X}_{s,u}^{1}(X^{1}))-\sigma (X^{2}))^{2}]\leq
&C_\sigma(1+\delta){\mathbb{E}}[\mathcal{Y}^2_{s,u}]+(1+\delta^{-1})C(1+{%
\mathbb{E}}[(X^{2})^{2}])(u-s) \\
\leq &C_\sigma(1+\delta){\mathbb{E}}[(X^1-X^2)^2] \\
&+(1+\delta^{-1})C(1+{\mathbb{E}}[(X^{1})^{2}]+{\mathbb{E}}[(X^{2})^{2}])(u-s).
\end{align*}
Therefore putting the above estimates together and taking $\delta>0$
sufficiently small, we obtain~\eqref{AN3} for $\varepsilon=1$ and any $%
b_*\in(0,2 \bar{b}-C_\sigma)$. Similar arguments give~\eqref{AN3} for $%
\mathcal{X}_{s,t}^1(X^i)$, $i=1,2$. We conclude by applying Theorem~\ref{FINAL}: since $b_*$ can take any value in $(0,2\bar{b}-C_\sigma)$, we  take $b_*\in(1,2\bar{b}-C_\sigma)$ if $2\bar{b}-C_\sigma>1$ to get the convergence with rate one and $\beta=b_*$ otherwise.
\end{proof}

\subsection{A Boltzmann equation in the martingale regime ($W_2$)}
\label{App_Boltz}



We now consider the following $d$-dimensional stochastic equation for $X^1\in \mathcal{%
P} _2(\mathbb{R}^d)$ 
\begin{equation}  \label{eq_Boltzmann_X1}
X_{s,t}^{1}(X^{1})=X^{1}+\int_{s}^{t}b(X_{s,r}^{1}(X^{1}))dr+\int_{s}^{t}
\int_{E}\int_{\mathbb{R}^{d}}c(v,z,X_{s,r-}^{1}(X^{1}))d\widetilde{N}_{%
\mathcal{L} (X_{s,r}^{1})}(v,z,r)
\end{equation}
where $b:{\mathbb{R}}^d\to {\mathbb{R}}^d$, $c:{\mathbb{R}}^d \times E\times 
{\mathbb{R}}^d  \to {\mathbb{R}}^d$ and $%
dN_{\mathcal{L}(X_{s,r}^{1})}(v,z,r)$ is a Poisson point measure on $\mathbb{%
R}^{d}\times E\times \mathbb{R}_{+}$ independent of~$X_1$ with compensator \linebreak $d%
\widehat{N}_{\mathcal{L} (X_{s,r}^{1})}(v,z,r)=\mathcal{L}%
(X_{s,r}^{1})(dv)\Lambda (dz)dr$ and $\widetilde{N}_{\mathcal{L}%
(X_{s,r}^{1})}=N_{\mathcal{L}(X_{s,r}^{1})}- \widehat{N}_{\mathcal{L}%
(X_{s,r}^{1})}.$ Here again, $\mathcal{L} (X_{s,r}^{1})$ denotes the law of
the random variable $X_{s,r}^{1} $ and $E$ is a measure space with measure $%
\Lambda $.

We associate to the above equation the following one step Euler scheme for $%
X^2\in \mathcal{P}_2(\mathbb{R}^d)$ independent of $N_{\mathcal{L}%
(X_{s,r}^{1})}$. 
\begin{equation}  \label{eq_Boltzmann_X2}
X_{s,t}^{2}(X^{2})=X^{2}+\int_{s}^{t}b(X^{2})dr+\int_{s}^{t} \int_{E}\int_{%
\mathbb{R}^{d}}c(v,z,X^{2})d\widetilde{N}_{\mathcal{L}(X^{2})}(v,z,r).
\end{equation}
where $dN_{\mathcal{L}(X^{2})}(v,z,r)$ is a Poisson point measure
independent of $X^2$ with compensator $d\widehat{N}_{\mathcal{L}%
(X^{2})}(v,z,r)=\mathcal{L} (X^{2})(dv)\Lambda (dz)dr$ and $\widetilde{N}_{%
\mathcal{L}(X^{2})}=N_{\mathcal{L} (X^{2})}-\widehat{N}_{\mathcal{L}%
(X^{2})}. $

Moreover, we define the maps for $\mathcal{L}(X)=\mu\in \mathcal{P}_2(%
\mathbb{R}^d) $ 
\begin{equation*}
\theta _{s,t}(\mu )=\mathcal{L}(X_{s,t}^{1}(X)) ,\quad \Theta _{s,t}(\mu )= 
\mathcal{L}(X_{s,t}^{2}(X)).
\end{equation*}
We will work under the following hypotheses: 
\begin{align}
\left\langle x-y,b(x)-b(y)\right\rangle \leq & -\overline{b}\left\vert
x-y\right\vert ^{2}\quad and\quad \left\vert b(x)-b(y)\right\vert ^{2}\leq
C_b\left\vert x-y\right\vert ^{2}  \label{VL4} \\
\left\vert c(v,z,x)-c(v^{\prime },z,x^{\prime })\right\vert ^{2}\leq & 
\overline{c}^{2}(z)(\left\vert v-v^{\prime }\right\vert ^{2}+\left\vert
x-x^{\prime }\right\vert ^{2})\quad and  \label{VL5} \\
\left\vert c(0,z,0)\right\vert ^{2} \leq& \overline{c}^{2}(z)\quad with \\
Q:=& \int_{E}\overline{c}^{2}(z)\Lambda (dz)<\infty .  \label{Def_Q}
\end{align}
Under these hypotheses, \cite[Theorem 3.1]{ABC} gives the existence of a
solution to Equation~\eqref{eq_Boltzmann_X1} and $\theta_{s,t}(\mu)$ is
uniquely defined. Furthermore, it is also proven that
\begin{equation}  \label{VL5'}
W_2(\theta_{s,t}(\mu),\Theta_{t^n_n,t^n_{n-1}}\circ\dots
\circ\Theta_{t^n_0,t^n_1}(\mu) )\to_{n\to \infty} 0
\end{equation}
for any sequence $s=t^n_0<t^n_1<\dots<t^n_n=t$ such that $\max_{1\le i\le n}
(t^n_i -t^n_{i-1}) \to_{n\to \infty} 0$.

\begin{theorem}
\label{thm_Boltz_W2} Assume that (\ref{VL4}--\ref{Def_Q}) hold with $Q<%
\overline{b}$. Then, $\theta $ and $\Theta $ satisfy the $2$-Foster-Lyapunov
condition, are $(2,b_{\ast },1/2)$ coupled for every $b_{\ast }<2(\bar{b}-Q)$%
, and $\theta $ is self-coupled with the same parameters.

Furthermore, the Markov process $(X_{s,t}^{1}(X))_{t\geq s}$
admits a unique invariant measure $\nu \in {\mathcal{P}}_2({\mathbb{R}}^d)$,
and for any $\beta \in (0,2(\bar{b}-Q)) \cap (0,1/2]$ there exists $C\in {%
\mathbb{R}}_+$ such that for every $\mu \in {\mathcal{P}}_2({\mathbb{R}})$, $%
n\ge 1$ and $t>s$, 
\begin{equation*}
W_{2}^{2}(\Theta _{s,t_{n}}^{\pi }(\mu ),\nu )\leq C(\left\Vert \mu
\right\Vert _{2}^{2}+1) \frac{1}{n^{\beta}},\ W_{2}^{2}(\theta
_{s,t}(\mu ),\nu )\leq C(\left\Vert \mu \right\Vert _{2}^{2}+1)\times
e^{-\beta (t-s)},
\end{equation*}
where $t_k=s+\sum_{i=1}^{k}\frac{1}{1+i}$ and $\pi=\{t_0<\dots<t_n\}$.
\end{theorem}

This appendix section is devoted to the proof of this theorem, which follows the same steps as the one with $W_1$
estimates. However, in contrast with the $W_1$ case (see Theorem~\ref{thm_Boltz_W1}), we are not able to deduce the coupling property from the sewing lemma, see Eq.~\eqref{eq_sew}. Thus, we prove here directly the coupling property. To construct the coupling, we work with approximation schemes on a regular time grid and let then the time step go to zero to obtain the coupling property for the flow. Namely, for
each $m\in {\mathbb{N}}$ we consider the grid 
\begin{equation*}
s_{i}=s+\frac{i}{m}, \quad i\in {\mathbb{N}},
\end{equation*}
and we define 
\begin{equation*}
\Theta _{s,s_{i}}^{m}(\mu )=\Theta _{s_{i-1},s_{i}}\circ ....\circ \Theta
_{s_{0},s_{1}}(\mu ) \text{ and } \Theta _{s,t}^{m}(\mu )=
\Theta_{s_i,t}\circ\Theta _{s,s_{i}}^{m}(\mu ) \text{ for } t \in
(s_i,s_{i+1}).
\end{equation*}

In order to prove Theorem~\ref{thm_Boltz_W2}, we use Theorem~\ref{stochastic}
so we have to produce a probabilistic representation and to prove that this
representation verifies (\ref{AN3}) and (\ref{AN2BIS}). For the moment we fix $%
m$, and for $s_{i-1}\leq u<s_{i}$ we denote $\eta (u)=s_{i-1}$.

\textbf{Step 1: construction of the coupling process (stochastic
representation) }for $\Theta _{s,s_{i}}^{m}(\mu )$ and $\Theta
_{s,s_{i}}(\mu ).$\ Our first step is to produce the joint representation
(coupling process) for them. Let $\mu ^{1},\mu ^{2}$ be fixed. For each $%
i\in {\mathbb{N}}$, we define $\Pi _{s,s_{i}}(dv_{1},dv_{2})$ as the optimal 
$W_{2}$-coupling of $\Theta _{s,s_{i}}^{m}(\mu ^{1})$ and $\mu ^{2},$ and we
take $\tau _{s,s_{i}}=(\tau _{s,s_{i}}^{1},\tau
_{s,s_{i}}^{2}):(0,1)\rightarrow \mathbb{R}^{d}\times \mathbb{R}^{d}$ such
that 
\begin{equation*}
\int_{\mathbb{R}^{d}\times \mathbb{R}^{d}}F(v_{1},v_{2})\Pi
_{s,s_{i}}(dv_{1},dv_{2})=\int_{0}^{1}F(\tau _{s,s_{i}}(w))dw.
\end{equation*}
The function $\tau$ is constructed such that $(s,w)\mapsto \tau_{s,s_i}(w)$
is measurable, see Carmona and Delarue~\cite[Lemma 5.29]{CD}.

We construct now a joint probabilistic representation (in the sense defined
in Section 1). We consider a filtered probability space $(\Omega ,\mathcal{F}
,\mathbb{P})$ that endows $dN(w,z,r)$ a Poisson point measure of intensity $%
dw\Lambda (dz)dr$. We denote by $(\mathcal{F}_{t})_{t\ge 0}$ the filtration
generated by this random measure and $d\tilde{N}$ the compensated measure.
Moreover, we consider random variables $X^{i}\in L^{2}(\Omega ),i=1,2$ which
are $\mathcal{F}_{s}$ measurable and such that $(X^{1},X^{2})$ has the law $%
\Pi _{s,s}(dv_{1},dv_{2}),$ which is the $W_{2}$ optimal coupling of $\mu
^{1}$ and $\mu ^{2}.$ In particular $X^{i}\sim \mu ^{i}$. Then, we define
the equations which give the coupling 
\begin{align}
	\notag
\!\!  x_{s,t}^{1}(X^{1}) &=\! X^{1}+\!\int_{s}^{t}\!\! b(x_{s,\eta
(r)}^{1}(X^{1}))dr+\!\int_{s}^{t}\!\!\int_{E}\int_{0}^{1}\!\!\! c(\tau _{s,\eta
(r)}^{1}(w),z,x_{s,\eta (r)}^{1}(X^{1}))d\widetilde{N}(w,z,r) 
\\\!\!  x_{s,t}^{2}(X^{2})& 
=\! X^{2}+\!\int_{s}^{t}\!\! b(X^{2})dr+\! \int_{s}^{t}\!\!\int_{E}\int_{0}^{1}\!\!\! c(\tau
_{s,\eta (r)}^{2}(w),z,X^{2})d\widetilde{N}(w,z,r).
\label{eq:x}
\end{align}
We remark that by induction all above integrals driven by $\widetilde{N}$
are square integrable martingales and therefore their expectation is zero.
This and similar arguments for other integrals driven by $\widetilde{N}$
will be frequently used in what follows.

\begin{lemma}
\label{law} For any $t\ge s$, $x_{s,t}^{1}(X^{1})$ has the law $\Theta
_{s,t}^{m}(\mu ^{1}),$ and $x_{s,t}^{2}(X^{2})$ has the law $%
\Theta_{s,t}(\mu ^{2})$.
\end{lemma}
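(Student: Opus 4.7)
Plan. The central tool is the mapping theorem for Poisson random measures. On each strip $[s_i,s_{i+1})$ and each $j \in \{1,2\}$, the measurable map $(w,z,r)\mapsto (\tau^j_{s,s_i}(w),z,r)$ pushes the Poisson random measure $N$ (restricted to the strip) to a Poisson random measure $N^{(j,i)}$ on $\mathbb{R}^d\times E\times [s_i,s_{i+1})$ whose intensity is the pushforward of $dw\,\nu(dz)\,dr$, namely $(\tau^j_{s,s_i})_\ast(dw)\otimes \nu(dz)\,dr$. Since $\tau^j_{s,s_i}(W)$ is distributed as the $j$-th marginal of $\Pi_{s,s_i}$ when $W$ is uniform on $(0,1)$, this intensity equals $\Theta^m_{s,s_i}(\mu^1)(dv)\,\nu(dz)\,dr$ when $j=1$ and $\mu^2(dv)\,\nu(dz)\,dr$ when $j=2$. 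Moreover, being a deterministic function of $N$ restricted to $[s_i,s_{i+1})$, the measure $N^{(j,i)}$ is independent of $\mathcal{F}_{s_i}$.

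I would first treat $x^2$, which is direct. The measures $N^{(2,i)}$, $i\in\mathbb{N}$, live on disjoint time strips, so they are mutually independent and independent of $\mathcal{F}_s$; gluing them yields a single Poisson random measure $N^{(2)}$ on $\mathbb{R}^d\times E\times [s,\infty)$ with intensity $\mu^2(dv)\,\nu(dz)\,dr$, independent of $X^2\sim \mu^2$. Performing the change of variables $w \leadsto \tau^2_{s,\eta(r)}(w)$ in the second integral of~\eqref{eq:x} rewrites the equation for $x^2$ as
\begin{equation*}
x^2_{s,t}(X^2)=X^2+\int_s^t b(X^2)\,dr+\int_s^t\!\int_E\!\int_{\mathbb{R}^d} c(v,z,X^2)\,d\widetilde{N}^{(2)}(v,z,r),
\end{equation*}
which is exactly~\eqref{eq_Boltzmann_X2} driven by $N^{(2)}$. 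Hence $\mathcal{L}(x^2_{s,t}(X^2))=\Theta_{s,t}(\mu^2)$.

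For $x^1$ I would proceed by induction on $i$, with inductive hypothesis $\mathcal{L}(x^1_{s,s_i}(X^1))=\Theta^m_{s,s_i}(\mu^1)$. For $t\in(s_i,s_{i+1}]$, the same change of variables yields
\begin{equation*}
x^1_{s,t}(X^1)=x^1_{s,s_i}(X^1)+\int_{s_i}^t b(x^1_{s,s_i}(X^1))\,dr+\int_{s_i}^t\!\int_E\!\int_{\mathbb{R}^d} c(v,z,x^1_{s,s_i}(X^1))\,d\widetilde{N}^{(1,i)}(v,z,r).
\end{equation*}
By the paragraph above, $N^{(1,i)}$ has intensity $\Theta^m_{s,s_i}(\mu^1)(dv)\,\nu(dz)\,dr=\mathcal{L}(x^1_{s,s_i}(X^1))(dv)\,\nu(dz)\,dr$ and is independent of $x^1_{s,s_i}(X^1)$, so the displayed equation is precisely the defining equation of a one-step Euler scheme between $s_i$ and $t$ with initial distribution $\Theta^m_{s,s_i}(\mu^1)$. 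Therefore $\mathcal{L}(x^1_{s,t}(X^1))=\Theta_{s_i,t}(\Theta^m_{s,s_i}(\mu^1))=\Theta^m_{s,t}(\mu^1)$, which in particular closes the induction at time $s_{i+1}$.

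The main point to carry out carefully is the mapping argument: one must verify that $N^{(j,i)}$ is genuinely Poisson with the claimed intensity and has the required independence properties. Since $\tau^j_{s,s_i}$ is a deterministic measurable map, this is a standard application of the Poisson mapping theorem combined with the fact that $N$ restricted to disjoint time intervals gives independent Poisson measures. The only mild subtlety is that the pushforward map depends on the strip index $i$, so one cannot build a single global change of variables and must patch strip by strip, which is exactly how the induction is organized above.
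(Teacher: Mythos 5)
Your proof is correct, but it takes a different technical route from the paper's. You invoke the Poisson mapping theorem: pushing $N$ forward strip by strip through $(w,z,r)\mapsto(\tau^j_{s,s_i}(w),z,r)$ produces new Poisson random measures whose intensities are exactly the compensators $\Theta^m_{s,s_i}(\mu^1)(dv)\nu(dz)dr$ (resp.\ $\mu^2(dv)\nu(dz)dr$) appearing in the definition of the scheme, so that $x^1$ and $x^2$ solve verbatim the defining (explicit) one-step equations~\eqref{eq_Boltzmann_X2}, and you conclude by identification of joint laws of (initial datum, driving measure), using independence of the post-$s_i$ noise from $\mathcal{F}_{s_i}$. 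The paper instead compares, on each strip, the compensated jump increments $I_j(x)$ and $I'_j(x)$ through their characteristic functions, computed via It\^o's formula: the L\'evy exponents $\lambda_h(x)$ and $\lambda'_h(x)$ coincide precisely because $\tau^1_{s,s_{j-1}}$ pushes the uniform law to $\Theta^m_{s,s_{j-1}}(\mu^1)$, and the conclusion follows by conditioning on $\mathcal{F}_{s_j}$ and recurrence. Both arguments hinge on the same fact (the pushforward property of $\tau$) and on the same strip-by-strip conditioning/induction structure; yours is more structural and identifies the driving noise itself rather than only the law of each increment, at the cost of having to verify the hypotheses of the mapping theorem ($\sigma$-finiteness of the image intensity, a.s.\ distinctness of image points, which holds here since the time coordinate is untouched and Lebesgue in $r$), the strip-wise independence used for gluing $N^{(2)}$, and the independence of the pushed-forward measures from $\mathcal{F}_{s_i}$ (hence from the initial datum), all of which you correctly flag; the paper's computation avoids these measure-theoretic points but only identifies one-dimensional marginal laws of increments, which is all that is needed.
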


\begin{proof}
Notice first that $\Theta _{s,t}^{m}(\mu ^{1})$ is the law of $%
X_{s,t}^{1}(X^{1})$ constructed recursively from~\eqref{eq_Boltzmann_X2} by 
\begin{align*}
X_{s,t}^{1}(X^{1}) =&X^{1}+\int_{s}^{t}b(x_{s,\eta
	(r)}^{1}(X^{1}))dr\\&+\sum_{s_{j}\leq t}\int_{s_{j-1}}^{s_{j}}\int_{E}\int_{%
	\mathbb{R}^{d}}c(v,z,X_{s,s_{j-1}}^{1}(X^{1}))d \widetilde{N}_{\rho
	_{r}}(v,z,r) \\
&+\int_{\eta (t)}^{t}\int_{E}\int_{\mathbb{R}^{d}}c(v,z,x_{s,\eta
	(t)}^{1}(X^{1}))d \widetilde{N}_{\rho _{r}}(v,z,r)
\end{align*}
with $\rho _{r}(dv)=\Theta _{s,s_{j-1}}^{m}(\mu ^{1})(dv)$ for $s_{j-1}\leq
r<s_{j}.$ Let
\begin{align*}
	I_{j}(x) &=\int_{s_{j-1}}^{s_{j}}\int_{E}\int_{0}^{1}c(\tau
	_{s,s_{j-1}}^{1}(w),z,x)d\widetilde{N}(w,z,r) ,\\
	I_{j}^{\prime }(x) &=\int_{s_{j-1}}^{s_{j}}\int_{E}\int_{\mathbb{R}%
		^{d}}c(v,z,x)d \widetilde{N}_{\rho _{r}}(v,z,r).
\end{align*} 
Using It\^{o}'s formula one easily checks that 
\begin{equation*}
{\mathbb{E}}(e^{i\left\langle h,I_{j}(x)\right\rangle
})=e^{(s_{j}-s_{j-1})\lambda _{h}(x)}\quad \text{and}\quad {\mathbb{E}}%
(e^{i\left\langle h,I_{j}^{\prime }(x)\right\rangle
})=e^{(s_{j}-s_{j-1})\lambda _{h}^{\prime }(x)}
\end{equation*}
with 
\begin{align*}
\lambda _{h}(x) &=\int_{0}^{1}\int_{E}\left(e^{i\left\langle h,c(\tau
	_{s,s_{j-1}}^{1}(w),z,x)\right\rangle }-1-i\langle h,c(\tau
_{s,s_{j-1}}^{1}(w),z,x)\rangle\right)\Lambda (dz)dw \\
\lambda _{h}^{\prime }(x) &=\int_{\mathbb{R}^{d}}\int_{E}\left(e^{i \langle h,c(v,z,x) \rangle } -1-i\langle h,c(v,z,x)\rangle \right)\Lambda
(dz)\Theta _{s,s_{j-1}}^{m}(\mu^{1})(dv).
\end{align*}
By the very definition of $\tau _{s,s_{j-1}}^{1}(w)$, the image of the
uniform distribution is \linebreak $\Theta _{s,s_{j-1}}^{m}(\mu^{1})$. Therefore, $%
\lambda _{h}(x)=\lambda _{h}^{\prime }(x)$ and the law of $I_{i}(x)$
coincides with the law of $I_{i}^{\prime }(x).$ Then, conditioning on $%
\mathcal{F}_{s_j}$, one proves by recurrence that $x_{s,t}^{1}(X^{1})$ has
law $\Theta _{s,t}^{m}(\mu ^{1})$ for $t\in [s_j,t_{j+1}]$ and thus for any $%
t\ge s$. In a similar way (but without recurrence) one checks that $%
x_{s,t}^{2}(X^{1})$ has law $\Theta _{s,t}(\mu ^{2})$.
\end{proof}

Both $x_{s,t}^{1}(X^{1})$ and $x_{s,t}^{2}(X^{2})$ are defined on the same
probability space - so they represent a joint probabilistic representation.
But we have to check the hypothesis concerning the coupling: see (\ref{AN5}%
), (\ref{AN3} ),(\ref{AN2BIS}). By the very construction (\ref{AN5}) is
verified.\newline

\noindent \textbf{Step 2: The Foster-Lyapunov condition (\ref{AN2BIS}). }

From the assumptions (\ref{VL4}) and (\ref{VL5}), we have for every $\delta
>0$ 
\begin{align}
	\left\langle x,b(x)\right\rangle &\leq (-\overline{b}+\delta )\left\vert
	x\right\vert ^{2}+\delta ^{-1}\left\vert b(0)\right\vert ^{2}  \label{VL1} \\
	\left\vert c(v,z,x)\right\vert ^{2} &\leq \overline{c}^{2}(z)(1+\left\vert
	v\right\vert ^{2}+\left\vert x\right\vert ^{2})\quad and\quad \left\vert
	b(x)\right\vert \leq K(1+\left\vert x\right\vert ).  \label{VL2}
\end{align}

\begin{lemma}
\label{lem_Boltzmann_W2} Under hypotheses~(\ref{VL4}) and (\ref{VL5}), we
have the following:

\textbf{A }There exists a constant $C$ (depending on $Q,\overline{b}$ and $K 
$, but not on the time step $1/m$) such that 
\begin{equation}
{\mathbb{E}}[\vert x_{s,t}^{i}(X^{i})-X^{i}\vert ^{2}]\leq C(1+{%
\mathbb{E}}[\vert X^{i}\vert ^{2}] )(t-s).  \label{VL3'}
\end{equation}

\textbf{B }If $\overline{b}>Q+\delta $\ then for every $s<t\leq s+1$ 
\begin{align}
{\mathbb{E}}[\vert x_{s,t}^{1}\vert ^{2}] \leq& {\mathbb{E}}%
[\vert X^{1}\vert ^{2}](1-2(\overline{b}-Q-\delta )(t-s))
\label{VL3'''} \\
&+(2\delta ^{-1}\left\vert b(0)\right\vert ^{2}+Q)(t-s)+C(1+{\mathbb{E}}
[\vert X^{1}\vert ^{2}])(t-s)^{3/2}.  \notag
\end{align}
\end{lemma}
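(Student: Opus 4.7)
My plan is to attack both parts by applying Itô's formula for jump processes (cf. \cite{RW}) to the function $F(x)=|x|^2$, exploiting the fact that, by Lemma~\ref{law}, for each $r$ the $(0,1)$-uniform image $\tau_{s,\eta(r)}^1(U)$ has the same law as $x_{s,\eta(r)}^1$. Throughout, the compensated integrals against $d\widetilde{N}$ will be martingales whose expectations vanish, so the only contributions that survive are the Lebesgue-time drift and the angle bracket produced by the jumps.

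For Part~\textbf{A}, I would first establish an a priori second moment bound on $x_{s,t}^{i}$, independent of the mesh $1/m$, on the interval $[s,s+1]$. Applying Itô's formula to $|x_{s,t}^{i}|^2$, the drift contribution is controlled by $2\mathbb{E}\langle x_{s,r}^i,b(x_{s,\eta(r)}^i)\rangle$, which is bounded by $C(1+\mathbb{E}|x_{s,\eta(r)}^i|^2)$ using the sublinear growth of $b$ in~\eqref{VL2}, and the jump contribution $\mathbb{E}\int|c|^2 dw\,\nu(dz)$ is bounded by $Q(1+\mathbb{E}|\tau^i|^2+\mathbb{E}|x_{s,\eta(r)}^i|^2)=Q(1+2\mathbb{E}|x_{s,\eta(r)}^i|^2)$ by~\eqref{VL2} and the law-matching. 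Gronwall then gives $\mathbb{E}|x_{s,t}^i|^2\le C(1+\mathbb{E}|X^i|^2)$. To obtain~\eqref{VL3'}, I would go back to the SDE~\eqref{eq:x} for $x_{s,t}^i-X^i$, separate drift and martingale, apply Jensen's inequality to the drift and Itô's isometry to the jump integral, and then use the just-established second moment bound inside the integrands to conclude the $O(t-s)$ estimate.

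For Part~\textbf{B}, the strategy is sharper. Applying Itô's formula to $|x_{s,t}^1|^2$ and taking expectations yields
\begin{equation*}
\mathbb{E}|x_{s,t}^1|^2-\mathbb{E}|X^1|^2=2\int_s^t\mathbb{E}\langle x_{s,r}^1,b(x_{s,\eta(r)}^1)\rangle\,dr+\int_s^t\int_E\int_0^1\mathbb{E}|c(\tau_{s,\eta(r)}^1(w),z,x_{s,\eta(r)}^1)|^2\,dw\,\nu(dz)\,dr.
\end{equation*}
For the drift, I would split $\langle x_{s,r}^1,b(x_{s,\eta(r)}^1)\rangle=\langle x_{s,\eta(r)}^1,b(x_{s,\eta(r)}^1)\rangle+\langle x_{s,r}^1-x_{s,\eta(r)}^1,b(x_{s,\eta(r)}^1)\rangle$, use~\eqref{VL1} on the first piece to get $(-\bar b+\delta)|x_{s,\eta(r)}^1|^2+\delta^{-1}|b(0)|^2$, and Cauchy--Schwarz with~\eqref{VL2} together with Part~\textbf{A} applied on $[\eta(r),r]$ on the second to produce an $O((r-s)^{1/2})$ remainder times $(1+\mathbb{E}|X^1|^2)^{1/2}$. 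For the jump integrand, using $\mathcal L(\tau^1)=\mathcal L(x_{s,\eta(r)}^1)$ and~\eqref{VL2} yields a clean bound $Q(1+2\mathbb{E}|x_{s,\eta(r)}^1|^2)$. Collecting gives the coefficient $-2(\bar b-Q-\delta)$ in front of $\mathbb{E}|x_{s,\eta(r)}^1|^2$, plus constants $(2\delta^{-1}|b(0)|^2+Q)(t-s)$ and a remainder.

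The final delicate step, which I expect to be the main obstacle, is to convert $\mathbb{E}|x_{s,\eta(r)}^1|^2$ in the leading term into $\mathbb{E}|X^1|^2$ without losing the sign of $-2(\bar b-Q-\delta)$. I would write $\mathbb{E}|x_{s,\eta(r)}^1|^2-\mathbb{E}|X^1|^2=\mathbb{E}[(x_{s,\eta(r)}^1-X^1)(x_{s,\eta(r)}^1+X^1)]$ and apply Cauchy--Schwarz together with~\eqref{VL3'} and the a priori moment bound from Part~\textbf{A}, producing an error of order $\sqrt{r-s}\,(1+\mathbb{E}|X^1|^2)$. Integrating over $r\in[s,t]$ yields precisely the advertised $(t-s)^{3/2}(1+\mathbb{E}|X^1|^2)$ remainder in~\eqref{VL3'''}, at which point all terms align with the statement of the lemma.
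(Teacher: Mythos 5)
Your argument is correct and is essentially the paper's own proof: an a priori second-moment bound closed by Gronwall, the crude drift/jump estimate for Part~\textbf{A}, and for Part~\textbf{B} It\^o's formula for $|x|^2$ with the drift split at the frozen time $\eta(r)$, hypothesis~\eqref{VL1} for the contraction, the law identity $\int_0^1|\tau^1_{s,\eta(r)}(w)|^2dw=\mathbb{E}|x^1_{s,\eta(r)}|^2$ in the jump term, and the final Cauchy--Schwarz replacement of $\mathbb{E}|x^1_{s,\eta(r)}|^2$ by $\mathbb{E}|X^1|^2$ at the cost of a $(t-s)^{3/2}$ remainder. The only slip is bookkeeping: the Cauchy--Schwarz bound on $\langle x^1_{s,r}-x^1_{s,\eta(r)},b(x^1_{s,\eta(r)})\rangle$ produces a factor $(1+\mathbb{E}|X^1|^2)$ (the product of the two square roots), not $(1+\mathbb{E}|X^1|^2)^{1/2}$, which is exactly what the stated remainder in~\eqref{VL3'''} accommodates.
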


Note that the result in \eqref{VL3'''} differs in rate from the one in Lemma %
\ref{lem:3.2}.

\begin{proof}
	Through the proof, we simplify the notation using $ (x_{s,t}^{1},x_{s,t}^{2})=(x_{s,t}^{1}(X^1),x_{s,t}^{2}(X^2)) $.
	
We start from the equation satisfied by $x_{s,t}^{1}$ in \eqref{eq:x}. Using the inequality $%
|x+y|^2\leq 2(|x|^2+|y|^2) $, Cauchy-Schwarz inequality, the $ L^2 $-isometry for martingale driven stochastic integrals, \eqref{Def_Q},
(\ref{VL1}) and (\ref{VL2}),  
 one
gets 
\begin{equation*}
{\mathbb{E}}[\vert x_{s,t}^{1}\vert ^{2}]\leq C\left(1+{\mathbb{E}%
}[\vert X^{1}\vert ^{2}]+\int_{s}^{t}{\mathbb{E}}\left[\vert
x_{s,\eta (r)}^{1}\vert ^{2}\right]dr\right).
\end{equation*}
Here, we have used that $\int_{0}^{1}|\tau _{s,\eta (r)}^{1}(w)|^{2}dw={%
\mathbb{E}}[\vert x_{s,\eta (r)}^{1}\vert ^{2}]$ and the constant $ C $ depends on $ Q $. This identity will be used
frequently without any further mention of it. Next, due to a Gronwall type
inequality we obtain 
\begin{equation}
{\mathbb{E}}[\vert x_{s,t}^{1}\vert ^{2}]\leq C(1+{\mathbb{E}}[\vert
X^{1}\vert ^{2}]).  \label{a}
\end{equation}

\textbf{Proof of A:} Let us prove (\ref{VL3'}). As before, note that using the inequality $%
|x+y|^2\leq 2(|x|^2+|y|^2) $, Cauchy-Schwarz inequality, the $ L^2 $-isometry for martingale driven stochastic integrals,~%
\eqref{VL2} and (\ref{a}) one obtains 
\begin{align*}
&{\mathbb{E}}\left[\vert x_{s,t}^{1}(X^{1})-X^{1}\vert ^{2}\right] \\
&\leq 2(t-s){\mathbb{E}}\int_{s}^{t}\vert b(x_{s,\eta (r)}^{1})\vert
^{2}dr+2{\mathbb{E}}\int_{s}^{t}\int_{E}\int_{0}^{1}\vert c(\tau _{s,\eta
	(r)}^{1}(w),z,x_{s,\eta (r)}^{1})\vert ^{2}dw \Lambda (dz)dr \\
&\leq 2K(t-s){\mathbb{E}}\int_{s}^{t}(1+\vert x_{s,\eta (r)}^{1}\vert
^{2})dr+2Q\int_{s}^{t}(1+2{\mathbb{E}}[\vert x_{s,\eta (r)}^{1}\vert ^{2}])dr
\\
&\leq C^{\prime }(t-s)(1+{\mathbb{E}}[\vert X^{1}\vert ^{2}]).
\end{align*}

The proof in the case $i=2$ is the same.

\textbf{Proof of B:} One employs (\ref{VL3'}) and (\ref{VL1}) to obtain 
\begin{align*}
{\mathbb{E}}\left[\langle x_{s,r}^{1},b(x_{s,\eta (r)}^{1})\rangle \right]
\leq& \left({\mathbb{E}}[|x_{s,r}^{1}-x_{s,\eta(r)}^{1}|^2]{\mathbb{E}}%
[|b(x_{s,\eta (r)}^{1})|^2]\right)^{1/2}\\&+{\mathbb{E}}\left[\langle x_{s,\eta
	(r)}^{1},b(x_{s,\eta (r)}^{1})\rangle \right]\\
\leq &C(r-\eta (r))^{1/2}(1+{\mathbb{E}}[\vert X^{1}\vert ^{2}])+(-%
\overline{b }+\delta ){\mathbb{E}}[\vert x_{s,\eta (r)}^{1}\vert
^{2}]\\&+\delta ^{-1}|b(0)|^{2}.
\end{align*}

Since $\int_{0}^{1}\left\vert \tau _{s,\eta (r)}^{1}(w)\right\vert ^{2}dw={%
\mathbb{E}} [(x_{s,\eta (r)}^{1})^{2}]$ and we have
assumed ( \ref{Def_Q}) we also have 
\begin{equation*}
{\mathbb{E}}\left[\int_{s}^{t}\!\!\int_{E}\int_{0}^{1}\!\! \overline{c}%
^{2}(z)(1+\vert \tau _{s,\eta (r)}^{1}(w)\vert ^{2}+\vert x_{s,\eta
(r)}^{1}\vert ^{2})dw \Lambda (dz)dr\! \right]\!\leq Q\!\!\int_{s}^{t}\!\!(1+2{\mathbb{E}}%
[\vert x_{s,\eta (r)}^{1}\vert ^{2}])dr.
\end{equation*}
Then, using It\^{o}'s formula for the function $x\rightarrow \left\vert
x\right\vert ^{2},$ and the identity $\left\vert x+c\right\vert
^{2}-\left\vert x\right\vert ^{2}-2\left\langle x,c\right\rangle =\left\vert
c\right\vert ^{2},$ we obtain 
\begin{align*}
{\mathbb{E}}\left[ \vert x_{s,t}^{1}\vert ^{2} \right]=&{\mathbb{E}}[\vert
X^{1}\vert ^{2}]+2{\mathbb{E}}\int_{s}^{t}\left\langle
x_{s,r}^{1},b(x_{s,\eta (r)}^{1})\right\rangle dr \\
&+{\mathbb{E}}\int_{s}^{t}\int_{E}\int_{0}^{1}\left\vert c(\tau _{s,\eta
(r)}^{1}(w),z,x_{s,\eta (r)}^{1})\right\vert ^{2}dw \Lambda (dz)dr.  \label{VL2'}
\\
\leq &{\mathbb{E}}[\vert X^{1}\vert ^{2}]+C(t-s)^{3/2}(1+{%
\mathbb{E}}[\vert X^{1}\vert ^{2}])-2(\overline{b}-Q-\delta ){\mathbb{E}}%
\int_{s}^{t}\vert x_{s,\eta (r)}^{1}\vert ^{2}dr  \notag \\
&+(2\delta ^{-1}\left\vert b(0)\right\vert ^{2}+Q)(t-s)  \notag
\end{align*}

We have $\vert{\mathbb{E}}[\vert x_{s,\eta (r)}^{1}\vert^{2}]-{\mathbb{E}}%
[\vert X^{1}\vert ^{2}]\vert\le \sqrt{{\mathbb{E}}[(\vert x_{s,\eta
(r)}^{1}\vert+\vert X^{1}\vert)^{2}]}\sqrt{{\mathbb{E}}
[\vert x_{s,\eta
(r)}^{1}- X^{1}\vert^{2}]}$, and we get by using \textbf{A} and~\eqref{a} 
\begin{equation*}
\int_{s}^{t}\left\vert {\mathbb{E}}[\vert x_{s,\eta (r)}^{1}\vert ^{2}]-{%
\mathbb{E}}[\vert X^{1}\vert ^{2}]\right\vert dr\leq C(1+{\mathbb{E}}%
[\vert X^{1}\vert ^{2}])(t-s)^{3/2}.
\end{equation*}
The above inequalities give 
\begin{align*}
	{\mathbb{E}}[\vert x_{s,t}^{1}\vert ^{2}] \leq& (1-2(\overline{b}-Q-\delta
	)(t-s)){\mathbb{E}}[\vert X^{1}\vert ^{2}] \\
	&+C(t-s)^{3/2}(1+{\mathbb{E}}[\vert X^{1}\vert ^{2}])+(2\delta
	^{-1}\left\vert b(0)\right\vert ^{2}+Q)(t-s).\qedhere
\end{align*}
\end{proof}

We conclude from Lemma~\ref{lem_Boltzmann_W2} that 
\begin{equation*}
\left\Vert \Theta _{s,t}^{m}(\mu )\right\Vert _{2}^{2}\leq (1-2(\overline{b}
-Q-\delta ))(t-s))\left\Vert \mu \right\Vert _{2}^{2}+C(1+\left\Vert \mu
\right\Vert _{2}^{2})(t-s)^{3/2}+(Q+2\delta ^{-1}\left\vert b(0)\right\vert
^{2})(t-s).
\end{equation*}
Taking limits as $m\rightarrow \infty $ we obtain 
\begin{equation*}
\left\Vert \theta _{s,t}(\mu )\right\Vert _{2}^{2}\leq (1-2(\overline{b}
-Q-\delta ))(t-s))\left\Vert \mu \right\Vert _{2}^{2}+C(1+\left\Vert \mu
\right\Vert _{2}^{2})(t-s)^{3/2}+(Q+2\delta ^{-1}\left\vert b(0)\right\vert
^{2})(t-s).
\end{equation*}
We remark that the difference in the value of $\varepsilon^{\prime }$
between \eqref{VL3'''} and the result in Lemma \ref{lem:3.2} is due to the
fact that the above proof uses the modulus of continuity of the underlying
process. This difference will also appear in the rest of the proof.\newline

\textbf{Step 3: Proof of the coupling property (\ref{AN3}).}

\begin{lemma}
\textbf{A }We assume that assumptions (\ref{VL4}) and (\ref{VL5}) hold
with $Q<\overline{b}.$ Then, for every $b_*<2(\overline{b}-Q)$ there exists
a constant $C$ such that, if $0\leq t-s\leq 1,$ then 
\begin{align}
	\notag
{\mathbb{E}}[\vert x_{s,t}^{1}(X^{1})-x_{s,t}^{2}(X^{2})\vert
^{2}]\leq &{\mathbb{E}}[\vert X^{1}-X^{2}\vert ^{2}](1-b_*(t-s))+C(1+{%
	\mathbb{E}}[\vert X^{1}\vert ^{2}]\\&+{\mathbb{E}}[\vert
X^{2}\vert ^{2}])(t-s)^{3/2}.  \label{VL6}
\end{align}
In particular, $\theta _{s,t}$ and $\Theta _{s,t}$ are $(2,b_*,1/2)$-coupled.

\textbf{B }Moreover $\theta $ is $(2,b_*,1)-$self-coupled for some $h_0>0 $
and more precisely 
\begin{equation}
W^2_{2}(\theta _{s,t}(\mu ^{1}),\theta _{s,t}(\mu ^{2}))\leq (1-b_*
(t-s))W^2_{2}(\mu ^{1},\mu ^{2}){ +C(1+\|\mu_1\|^2_2+\|\mu_2\|^2)(t-s)^{2}  }.  \label{VL7}
\end{equation}
\end{lemma}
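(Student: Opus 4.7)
The overall strategy is to apply It\^o's formula to $|Y_t|^2$ with $Y_t = x_{s,t}^1(X^1) - x_{s,t}^2(X^2)$, estimate the drift and jump terms separately using the contraction~(\ref{VL4}) and Lipschitz property~(\ref{VL5}), and then take the limit $m\to\infty$ via~(\ref{VL5'}) to transfer the bound from $\Theta^m$ to $\theta$. Since the expectations of stochastic integrals with respect to $d\widetilde N$ vanish, one obtains
$$\mathbb{E}|Y_t|^2 = \mathbb{E}|X^1-X^2|^2 + 2\int_s^t\!\mathbb{E}\langle Y_r, b(x_{s,\eta(r)}^1) - b(X^2)\rangle dr + \int_s^t\!\!\int_E\!\!\int_0^1\!\mathbb{E}|c(\tau_{s,\eta(r)}^1(w),z,x_{s,\eta(r)}^1) - c(\tau_{s,\eta(r)}^2(w),z,X^2)|^2 dw\nu(dz)dr.$$

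For Part A, I would handle the drift by decomposing $b(x_{s,\eta(r)}^1) - b(X^2) = [b(x_{s,\eta(r)}^1) - b(x_{s,\eta(r)}^2)] + [b(x_{s,\eta(r)}^2) - b(X^2)]$ and writing $Y_r = Y_{\eta(r)} + (Y_r - Y_{\eta(r)})$. The principal term $\mathbb{E}\langle Y_{\eta(r)}, b(x_{s,\eta(r)}^1) - b(x_{s,\eta(r)}^2)\rangle \le -\bar b\,\mathbb{E}|Y_{\eta(r)}|^2$ by~(\ref{VL4}), while the residual contributions are dominated by Cauchy--Schwarz, Young's inequality, and the increment bound $\mathbb{E}|x_{s,r}^i - x_{s,\eta(r)}^i|^2, \mathbb{E}|x_{s,\eta(r)}^2 - X^2|^2 \le C(1+\mathbb{E}|X^i|^2)(r-s)$ from Lemma~\ref{lem_Boltzmann_W2}\textbf{A}, producing $O((t-s)^{3/2})$ terms with constants proportional to $\Gamma_2(\mu^1,\mu^2)$. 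For the jump term, (\ref{VL5}) gives
$$\int_E\!\!\int_0^1 |c(\tau^1(w),z,x_{s,\eta(r)}^1) - c(\tau^2(w),z,X^2)|^2\, dw\,\nu(dz) \le Q\bigl(W_2^2(\Theta_{s,\eta(r)}^m(\mu^1),\mu^2) + |x_{s,\eta(r)}^1 - X^2|^2\bigr),$$
where we used that $(\tau^1(W),\tau^2(W))$ is an optimal coupling. Since $(x_{s,\eta(r)}^1, X^2)$ is itself a coupling of $\Theta_{s,\eta(r)}^m(\mu^1)$ and $\mu^2$, the first term is dominated by the second, and Young's inequality yields $\mathbb{E}|x_{s,\eta(r)}^1 - X^2|^2 \le (1+\delta)\mathbb{E}|Y_{\eta(r)}|^2 + C_\delta(1+\mathbb{E}|X^2|^2)(\eta(r)-s)$. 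Combining with the comparison $|\mathbb{E}|Y_{\eta(r)}|^2 - \mathbb{E}|X^1-X^2|^2| \le C\Gamma_2(\mu^1,\mu^2)^{1/2}(r-s)^{1/2}$ (again from Lemma~\ref{lem_Boltzmann_W2}\textbf{A}), one arrives at
$$\mathbb{E}|Y_t|^2 \le (1 - b_\ast(t-s))\,\mathbb{E}|X^1-X^2|^2 + C\,\Gamma_2(\mu^1,\mu^2)\,(t-s)^{3/2}$$
for any $b_\ast < 2(\bar b-Q)$ after absorbing $\delta$. Since $W_2^2(\Theta_{s,t}^m(\mu^1), \Theta_{s,t}(\mu^2)) \le \mathbb{E}|Y_t|^2$ and Lemma~\ref{law} together with~(\ref{VL5'}) ensure $\Theta_{s,t}^m(\mu^1) \to \theta_{s,t}(\mu^1)$ in $W_2$, passing $m\to\infty$ gives~(\ref{VL6}) and hence the $(2,b_\ast,1/2)$-coupling of $\theta$ and $\Theta$.

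For Part B, I would work directly in continuous time: build two coupled solutions $X_{s,\cdot}^1(X^1)$ and $X_{s,\cdot}^{1\prime}(X^2)$ of~(\ref{eq_Boltzmann_X1}) with $(X^1,X^2)$ optimally coupled for $(\mu^1,\mu^2)$, driven by a single Poisson measure $dN(w,z,r)$ whose parameter $\tau_{s,r}(w) = (\tau_{s,r}^1(w), \tau_{s,r}^2(w))$ realizes, at every instant $r$, an optimal $W_2$-coupling of $\theta_{s,r}(\mu^1)$ and $\theta_{s,r}(\mu^2)$; such a joint construction is obtained by a Picard-type fixed point simultaneously with the solutions. Now It\^o's formula gives $\mathbb{E}|Y_t|^2 - \mathbb{E}|X^1-X^2|^2 \le -2\bar b\int_s^t\mathbb{E}|Y_r|^2 dr + 2Q\int_s^t\mathbb{E}|Y_r|^2 dr$, the drift bound following from~(\ref{VL4}) applied at the current time and the jump bound from $\int_0^1|\tau^1 - \tau^2|^2 dw = W_2^2(\theta_{s,r}(\mu^1),\theta_{s,r}(\mu^2)) \le \mathbb{E}|Y_r|^2$. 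Gronwall's lemma yields $W_2^2(\theta_{s,t}(\mu^1),\theta_{s,t}(\mu^2)) \le W_2^2(\mu^1,\mu^2) e^{-2(\bar b-Q)(t-s)}$, and the Taylor expansion $e^{-2(\bar b-Q)(t-s)} \le 1 - 2(\bar b-Q)(t-s) + C(t-s)^2$, together with $W_2^2(\mu^1,\mu^2) \le 2\,\Gamma_2(\mu^1,\mu^2)$, proves~(\ref{VL7}) with $b_\ast = 2(\bar b-Q)$.

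The main obstacle lies in Part A: the coupling $\tau^i$ is between $\Theta_{s,\eta(r)}^m(\mu^1)$ and the \emph{fixed} law $\mu^2$ rather than between $\Theta_{s,\eta(r)}^m(\mu^1)$ and $\Theta_{s,\eta(r)}(\mu^2)$, so the jump error naturally produces the quantity $\mathbb{E}|x_{s,\eta(r)}^1 - X^2|^2$ instead of $\mathbb{E}|Y_{\eta(r)}|^2$. Absorbing the gap via Young's inequality without destroying the contraction factor forces the appearance of the freezing error $\mathbb{E}|x_{s,\eta(r)}^2 - X^2|^2$, which is only $O((r-s)^{1/2})$ in the $L^2$ sense. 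This is precisely what enforces the weaker exponent $\varepsilon = 1/2$ in Part A, whereas the continuous-time construction of Part B sidesteps this entirely and recovers $\varepsilon = 1$.
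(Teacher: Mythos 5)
Your Part A is essentially the paper's own argument: apply It\^o's formula to $|y_{s,t}|^2$ with $y_{s,t}=x^1_{s,t}(X^1)-x^2_{s,t}(X^2)$, use the contraction \eqref{VL4} on the drift, the Lipschitz bound \eqref{VL5} together with the fact that $\tau_{s,\eta(r)}$ is an optimal coupling of $\Theta^m_{s,\eta(r)}(\mu^1)$ and $\mu^2$ (so that $\int_0^1|\tau^1-\tau^2|^2dw\le \mathbb{E}|x^1_{s,\eta(r)}-X^2|^2$) on the jumps, control the residual terms by the increment estimates of Lemma~\ref{lem_Boltzmann_W2}\textbf{A}, and pass $m\to\infty$ via Lemma~\ref{law} and \eqref{VL5'}. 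The only blemishes are cosmetic: the comparison estimate is $|\mathbb{E}|y_{s,r}|^2-\mathbb{E}|X^1-X^2|^2|\le C\,\Gamma_2(\mu^1,\mu^2)(r-s)^{1/2}$ (not $\Gamma_2^{1/2}$, since Cauchy--Schwarz produces the product of two square roots each of size $\Gamma_2^{1/2}$), which changes nothing in the final $(t-s)^{3/2}$ bound, and your choice to apply the contraction at the frozen time $\eta(r)$ rather than at $r$ is an equivalent variant of the paper's decomposition.

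Part B is where you deviate, and where there is a genuine gap. You work directly in continuous time with two coupled solutions of \eqref{eq_Boltzmann_X1} whose jump parameter $\tau_{s,r}$ realizes, for every $r$, an optimal $W_2$-coupling of $\theta_{s,r}(\mu^1)$ and $\theta_{s,r}(\mu^2)$, and you justify the existence of this object only by the phrase ``a Picard-type fixed point simultaneously with the solutions.'' That sentence hides the two nontrivial points: (i) a measurable-in-time selection of optimal couplings, which the paper only invokes (via Carmona--Delarue) at finitely many grid times; and (ii) the consistency statement that the first component of your coupled system, which solves a \emph{linearized} equation in which the law entering the compensator is externally prescribed as $\theta_{s,r}(\mu^1)$, actually has law $\theta_{s,r}(\mu^1)$ for all $r$ --- this is exactly the content of Lemma~\ref{law} at the discrete level, and it is the whole point of having a joint probabilistic representation; it does not come for free from a fixed-point argument and is not proved in your sketch. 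The paper's intended proof of \eqref{VL7} (``analogous but simpler'') avoids this entirely: one repeats the Step~1 construction with \emph{both} components given by the $m$-step scheme, with $\tau_{s,s_i}$ the optimal coupling of $\Theta^m_{s,s_i}(\mu^1)$ and $\Theta^m_{s,s_i}(\mu^2)$; the freezing errors are then $O(1/m)$ and disappear in the limit $m\to\infty$ taken through \eqref{VL5'}, yielding \eqref{VL7}. Your Gronwall computation and the conversion $e^{-2(\bar b-Q)(t-s)}\le 1-2(\bar b-Q)(t-s)+C(t-s)^2$ together with $W_2^2(\mu^1,\mu^2)\le 2\Gamma_2(\mu^1,\mu^2)$ are fine \emph{once} such a coupling is available, so the estimate itself is correct; but as written the construction underlying Part B is asserted rather than established, and you should either carry out the limiting argument from the discretized coupling as in Part A, or supply the measurable selection and the law-consistency argument explicitly.
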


\bigskip

\begin{remark}
 When dealing with $W_{2}(\theta _{s,t}(\mu ^{1}),\theta _{s,t}(\mu
^{2}))$ no error terms of the type ${\mathbb{E}[}\vert
b(x_{s,r}^{2}(X^{2}))-b(X^{2})\vert ^{2}]+{\mathbb{E}}[\vert
b(x_{s,\eta (r)}^{1})-b(x_{s,r}^{1})\vert ^{2}]$ appear in the proof. 
This is the reason why the term $C(1+{\mathbb{E}}[\vert X^{1}\vert
^{2}]+{\mathbb{E}}[\vert X^{2}\vert ^{2}])(t-s)^{3/2}$ does not
appear in the final result in comparison with \eqref{VL6}.
\end{remark}

\begin{proof}
	As in the previous proof we simplify the notation using \linebreak$ (x_{s,t}^{1},x_{s,t}^{2})=(x_{s,t}^{1}(X^1),x_{s,t}^{2}(X^2)) $.
	
\textbf{Proof of A. } The process $y_{s,t}=x_{s,t}^{1}-x_{s,t}^{2}$
satisfies 
\begin{equation*}
y_{s,t}=X^{1}-X^{2}+\int_{s}^{r}\Delta
b(r)dr+\int_{s}^{t}\int_{E}\int_{0}^{1}\Delta c(r,w,z)d\widetilde{N} (w,z,r),
\end{equation*}
with 
\begin{align*}
	\Delta b(r) &=b(x_{s,\eta (r)}^{1})-b(X^{2}) \\
	\Delta c(r,w,z) &=c(\tau _{s,\eta (r)}^{1}(w),z,x_{s,\eta
		(r)}^{1})-c(\tau _{s,\eta (r)}^{2}(w),z,X^{2}).
\end{align*}
From~\eqref{VL4} and using Lemma~\ref{lem_Boltzmann_W2}, we get for every $%
\delta >0$ 
\begin{align*}
{\mathbb{E}}[\left\langle y_{s,r},\Delta b(r)\right\rangle] \leq& -\overline{b%
} {\mathbb{E}}[\vert y_{s,r}\vert ^{2}]\\&+{\mathbb{E}}\left\langle
y_{s,r},b(x_{s,\eta (r)}^{1})-b(x_{s,r}^{1})\right\rangle +{\mathbb{E}}%
\left\langle y_{s,r},b(x_{s,r}^{2})-b(X^{2})\right\rangle \\
\leq& -\overline{b}{\mathbb{E}}[\vert y_{s,r}\vert ^{2}]+2\delta {\mathbb{E}%
}[\vert y_{s,r}\vert ^{2}] \\
&+{\delta^{-1} }\left({\mathbb{E}}\left[\vert
b(x_{s,r}^{2})-b(X^{2})\vert^{2}\right]+{\mathbb{E}}\left[\vert
b(x_{s,\eta (r)}^{1})-b(x_{s,r}^{1})\vert ^{2}\right]\right) \\
\leq& (2\delta -\overline{b}){\mathbb{E}}[\vert y_{s,r}\vert ^{2}]+%
{ \delta^{-1} }C_b\left({\mathbb{E}}\left[\vert
x_{s,r}^{2}-X^{2}\vert ^{2}\right]+{\mathbb{E}}[\vert x_{s,\eta
	(r)}^{1}-x_{s,r}^{1}\vert ^{2}]\right) \\
\leq& (2\delta -\overline{b}){\mathbb{E}}[\vert y_{s,r}\vert ^{2}]+%
{ \delta^{-1} }C_bC(1+{\mathbb{E}}[\vert X^{1}\vert^2] +{\mathbb{%
		E}}[\vert X^{2}\vert^2] )(r-s).
\end{align*}
Using the inequality $\vert \vert a\vert ^{2}-\vert
b\vert ^{2}\vert \leq \vert a-b\vert (\vert
a\vert +\vert b\vert )$, Cauchy-Schwarz inequality and (\ref%
{VL3'}), we have 
\begin{equation}
\vert {\mathbb{E}}[\vert y_{s,r}\vert ^{2}]-{\mathbb{E}}%
[\vert X^{1}-X^{2}\vert ^{2}]\vert \leq C(1+{\mathbb{E}}%
[\vert X^{1}\vert^2] +{\mathbb{E}}[\vert X^{2}\vert^2]
)(r-s)^{1/2}.
\label{eq:1a}
\end{equation}
Combining the two last inequalities, we finally get for $r\le s+1$ and a constant $ C>0 $ which depends on $ \delta $
\begin{equation*}
{\mathbb{E}}\left\langle y_{s,r},\Delta b(r)\right\rangle \leq (2\delta -%
\overline{b} ){\mathbb{E}}[\vert X^{1}-X^{2}\vert ^{2}]+C(1+{%
\mathbb{E}}[\vert X^{1}\vert ^2]+{\mathbb{E}}[\vert
X^{2}\vert^2] )(r-s)^{\frac{1}{2}}.
\end{equation*}
We now focus on the jump term and have by (\ref{VL5}) 
\begin{equation*}
\left\vert \Delta c(r,w,z)\right\vert ^{2}\leq \overline{c} ^{2}(z)(\vert
\tau _{s,\eta (r)}^{1}(w)-\tau _{s,\eta (r)}^{2}(w)\vert ^{2}+\vert
x_{s,\eta (r)}^{1}(X^{1})-X^{2}\vert ^{2}).
\end{equation*}
By (\ref{VL3'}), ${\mathbb{E}}[\vert x_{s,r}^{2}-X^{2}\vert
^{2}]\leq C(1+{\mathbb{E}}[\vert X^{2}\vert ])(r-s).$ We also recall
that by the very definition of $\tau $, the inequality $2xy\leq \delta
x^2+\delta^{-1}y^2 $ and \eqref{VL3'}, we have 
\begin{align*}
\!\!\int_{0}^{1}\!\!\left\vert \tau _{s,r}^{1}(w)-\tau _{s,r}^{2}(w)\right\vert
^{2}\!dw \!&= W_2^2(\Theta^m_{s,r}(\mu^1),\mu^2)\le {\mathbb{E}}\left[\vert
x_{s,r}^{1}(X^{1})-X^{2}\vert ^{2}\right] \\
&\leq (1+\delta ){\mathbb{E}}[\vert X^{1}-X^{2}\vert ^{2}]+(1+{\delta }%
^{-1}) {\mathbb{E}}\left[\vert x_{s,r}^{1}-X^{1}\vert ^{2}\right] \\
&\leq (1+\delta ){\mathbb{E}}[\vert X^{1}-X^{2}\vert ^{2}]+C(1+{\delta }%
^{-1})(1+{\mathbb{E}}[\vert X^{1}\vert ^{2}])(t-s).
\end{align*}
We conclude that 
\begin{align}
&\label{eq:2a} {\mathbb{E}}\int_{s}^{t}\int_{E}\int_{0}^{1}\left\vert \Delta
c(r,w,z)\right\vert ^{2}dw\Lambda (dz)dr \\
\leq\notag &{\mathbb{E}}\int_{s}^{t}\int_{E}\int_{0}^{1}\overline{c}%
^{2}(z)(\left\vert \tau _{s,r}^{1}(w)-\tau _{s,r}^{2}(w)\right\vert
^{2}+\left\vert x_{s,r}^{1}-X^{2}\right\vert ^{2})dw\Lambda (dz)dr \\\notag
\leq &2Q(1+\delta ){\mathbb{E}}[\vert X^{1}-X^{2}\vert ^{2}]+C(1+{\delta }%
^{-1})(1+{\mathbb{E}}[\vert X^{1}\vert ^{2}])(t-s).
\end{align}

Then, the above estimates in \eqref{eq:1a} and \eqref{eq:2a} allow us to take expectations in the It\^{o}'s
formula for the function $x\rightarrow \left\vert x\right\vert ^{2}$. We
also use the identity $\left\vert y+\Delta c\right\vert ^{2}-\left\vert
y\right\vert ^{2}-2\left\langle y,\Delta c\right\rangle =\left\vert \Delta
c\right\vert ^{2},$ so that we obtain 
\begin{align*}
	{\mathbb{E}}[\vert y_{s,t}\vert ^{2}] =&{\mathbb{E}}[\vert X^{1}-X^{2}\vert
	^{2}]+2{\mathbb{E}}\int_{s}^{t}\left\langle y_{s,r},\Delta b(r)\right\rangle
	dr\\&+{\mathbb{E}}\int_{s}^{t}\int_{E}\int_{0}^{1}\left\vert \Delta
	c(r,w,z)\right\vert ^{2}dw \Lambda (dz)dr. \\
	\leq& {\mathbb{E}}[\vert X^{1}-X^{2}\vert ^{2}](1+2(Q+\delta(2+Q) -%
	\overline{b} )(t-s))\\&+C(1+{\mathbb{E}}[\vert X^{1}\vert ^{2}]+{\mathbb{E}}%
	[\vert X^{2}\vert ^{2}])(t-s)^{3/2}
\end{align*}
so (\ref{VL6}) is proved by choosing $\delta $ small enough. In particular,
since ${\mathbb{E}}[\vert X^{1}-X^{2}\vert ^{2}]=W_{2}^{2}(\mu
^{1},\mu ^{2}),$ this gives 
\begin{align*}
&W_{2}^{2}(\Theta _{s,t}^{m}(\mu ^{1}),\Theta _{s,t}(\mu ^{2})) \\
&\leq {\mathbb{E}}\left[\vert x_{s,t}^{1}-x_{s,t}^{2}\vert
^{2}\right]={\mathbb{E}}[\vert y_{s,t}\vert ^{2}] \\
&\leq W_{2}^{2}(\mu ^{1},\mu ^{2})(1+2(Q+\delta(2+Q) -\overline{b}
)(t-s))+C(1+\left\Vert \mu ^{1}\right\Vert _{2}^{2}+\left\Vert \mu
^{2}\right\Vert _{2}^{2})(t-s)^{3/2}.
\end{align*}

Now, using \eqref{VL5'} 
and taking the limit as $m\rightarrow \infty $ we get 
\begin{align*}
W_{2}^{2}(\theta _{s,t}(\mu ^{1}),\Theta _{s,t}(\mu ^{2}))\leq&
W_{2}^{2}(\mu ^{1},\mu ^{2})(1+2(Q+\delta(2+Q) -\overline{b} )(t-s)) \\
&+C(1+\left\Vert \mu ^{1}\right\Vert _{2}^{2}+\left\Vert \mu
^{2}\right\Vert _{2}^{2})(t-s)^{3/2}.
\end{align*}
We get that $\theta$ and $\Theta$ are $(b_*,1,1/2)$-coupled for any $0<b_*<2(%
\bar{b}-Q)$.

The proof of \textbf{B} is analogue but simpler.
\end{proof}

\noindent {\bf Acknowledgment:} {A.A. acknowledges the support of the "chaire Risques Financiers", Fondation du Risque.
}
\end{document}